\documentclass[sn-mathphys-num]{sn-jnl}
\usepackage{amssymb, amsmath, amsthm, mathrsfs,epic,epsfig}
\usepackage{graphicx}
\usepackage{color}
\usepackage{makecell}
\usepackage{float, caption, subcaption}

\usepackage{multirow}%
\usepackage{amsthm}%
\usepackage{mathrsfs}%

\DeclareGraphicsRule{*}{eps}{*}{}
\input{epsf}

\newcommand{\bd}{\begin{description}}
\newcommand{\ed}{\end{description}}
\newcommand{\bi}{\begin{itemize}}
\newcommand{\ei}{\end{itemize}}
\newcommand{\be}{\begin{enumerate}}
\newcommand{\ee}{\end{enumerate}}
\newcommand{\beq}{\begin{equation}}
\newcommand{\eeq}{\end{equation}}
\newcommand{\beqs}{\begin{eqnarray*}}
\newcommand{\eeqs}{\end{eqnarray*}}






\newtheorem{theorem}{Theorem}[section]

\newtheorem{lemma}[theorem]{Lemma}
\newtheorem{corollary}[theorem]{Corollary}
\theoremstyle{definition}
\newtheorem{definition}[theorem]{Definition}
\newtheorem{case}{\tt Case}

\newtheorem{remark}[theorem]{Remark}

\newtheorem{question}{Question}

\setcounter{case}{0} \setcounter{claim}{0}

\begin{document}

\title{Gallai-Schur Triples and Related Problems}

\author[1]{
\fnm{Yaping}
\sur{Mao}}\email{mao-yaping-ht@ynu.ac.jp}
\author*[2]{
\fnm{Aaron}
\sur{Robertson}}\email{arobertson@colgate.edu}
\author[3]{
\fnm{Jian}
\sur{Wang}}\email{wangjian01@tyut.edu.cn}
\author[4]{
\fnm{Chenxu}
\sur{Yang}}\email{cxuyang@aliyun.com}
\author[5]{
\fnm{Gang}
\sur{Yang}}\email{gangyang98@outlook.com}

\affil[1]{\orgdiv{Faculty of Environment and Information Sciences}, \orgname{Yokohama National University}, \orgaddress{\city{Hodogaya-ku}, \state{Yokohama}, \country{Japan}}}

\affil*[2]{\orgdiv{Department of Mathematics}, \orgname{Colgate University}, \orgaddress{\city{Hamilton}, \state{New York}, \country{United States}}}

\affil[3]{\orgdiv{Department of Mathematics}, \orgname{Taiyuan University of Technology}, \orgaddress{\city{Taiyuan}, \country{China}}}

\affil[4]{\orgdiv{School of Computer Science}, \orgname{Qinghai Normal University}, \orgaddress{\city{Xining}, \state{Qinghai}, \country{China}}}

\affil[5]{\orgdiv{Graduate School of Environment and Information Sciences}, \orgname{Yokohama National University}, \orgaddress{\city{Hodogaya-ku}, \state{Yokohama}, \country{Japan}}}

\abstract{Schur's Theorem states that, for any $r \in \mathbb{Z}^+$, there
exists a minimum integer $S(r)$ such that every
$r$-coloring of $\{1,2,\dots,S(r)\}$ admits a monochromatic solution
to $x+y=z$.  Recently, Budden determined the related Gallai-Schur numbers;
that is, he determined the minimum integer $GS(r)$ such that every
$r$-coloring of $\{1,2,\dots,GS(r)\}$ admits either a rainbow or
monochromatic solution to $x+y=z$.  In this article we consider
problems that have been solved in the monochromatic setting
under a monochromatic-rainbow paradigm.  In particular, we investigate Gallai-Schur numbers when $x \neq y$,
we consider  $x+y+b=z$ and $x+y<z$, and we
investigate the asymptotic minimum number of rainbow and monochromatic
solutions to $x+y=z$ and $x+y<z$.
}

\keywords{Ramsey theory,  Rainbow solution, Strict Schur number, Gallai-Schur}

\maketitle

\section{Introduction}

A classical result in Ramsey theory is Schur's Theorem, which states that for any $r \in \mathbb{Z}^+$, there exists
a minimum integer $n$ such that every $r$-coloring of $[1,n]=\{1,2,\dots,n\}$ admits a monochromatic
solution to $x+y=z$.  Such numbers are called {\it Schur numbers} and are denoted by $S(r)$.  Only a handful of exact values
are known: $S(2)=5, S(3)=14, S(4)=45,$ and $S(5)=161$.  The largest of these was determined
in 2018 by Heule \cite{Heule}. For more details on   Schur  numbers, we refer the reader to \cite{ACPPRT,  RobertsonSchaal, RZ98, Rowley, Schoen99}.

Another classical result in Ramsey theory is the famous Ramsey Theorem \cite{Ram}.
From this theorem, it follows that every coloring of the edges
of a sufficiently large complete graph admits a complete subgraph of a given size with
the colors on all edges either the same or pairwise {distinct}.  In the latter situation we say
that the coloring is a {\it rainbow coloring}.

The numbers associated with monochromatic or rainbow substructures have been tagged with
the monicker Gallai since, in 1967, Gallai
\cite{Gallai} first examined this structure under the guise of
transitive orientations. Gallai's result was reproven in \cite{MR2063371}.

Applying this guarantee of either a monochromatic or rainbow substructure to
other Ramsey-type theorems, it is natural to investigate, in particular,
how this alters Schur's Theorem.  Since we can easily avoid rainbow structures by not using enough
colors, it is natural to require that every possible color be used.  We call such a coloring {\it exact}.

By Schur's Theorem, we may define the {\it Gallai-Schur numbers}: For $r \geq 3$, let $GS(r)$  be the minimum integer  such that
every exact $r$-coloring of $[1,GS(r)]$ admits either a monochromatic or rainbow solution to $x+y=z$.
Recently,  Budden \cite{Budden} obtained the following formula,
using results from \cite{AxenovichIverson, ChungGraham, GyarfasSarkozySeboSelkow}.

\begin{theorem}[\cite{Budden}]\label{th2-1}
For $r\geq 3$, we have
$
{GS}(r)=\begin{cases}
5^{\frac{r}{2}}+1 & \text{for $r$ even;}\\
2\cdot 5^{\frac{r-1}{2}}+1 &\text{for $r$ odd.}
\end{cases}
$
\end{theorem}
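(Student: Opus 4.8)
The plan is to recast this additive problem as a question about edge-colorings of complete graphs, so that \emph{rainbow} and \emph{monochromatic} Schur triples become rainbow and monochromatic triangles, and then to read off the answer from the extremal theory of rainbow-triangle-free (Gallai) colorings. Concretely, given an exact $r$-coloring $\chi$ of $[1,n]$, I would color the complete graph on vertex set $\{0,1,\dots,n\}$ by assigning to the edge $\{i,j\}$ (with $i<j$) the color $\chi(j-i)$. A triple $i<j<k$ then spans edges of colors $\chi(j-i),\chi(k-j),\chi(k-i)$, and since $(j-i)+(k-j)=(k-i)$ this triangle is monochromatic (respectively rainbow) precisely when the Schur triple $x+y=z$ with $(x,y,z)=(j-i,\,k-j,\,k-i)$ is monochromatic (respectively rainbow). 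As every Schur triple in $[1,n]$ arises from such a triple and conversely, the integer problem becomes equivalent to bounding the order of an exact $r$-coloring of a complete graph that contains neither a monochromatic nor a rainbow triangle.

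For the upper bound I would invoke the determination, by Chung--Graham \cite{ChungGraham} together with the Gallai-structure results of Gy\'arf\'as et al.\ \cite{GyarfasSarkozySeboSelkow} and Axenovich--Iverson \cite{AxenovichIverson}, of the maximum order $N(r)$ of a complete graph admitting an exact $r$-coloring with no rainbow triangle and no monochromatic triangle. The mechanism is the Gallai partition: a rainbow-triangle-free coloring refines into parts whose reduced coloring uses at most two colors and contains no monochromatic triangle, which forces at most five parts; avoiding monochromatic triangles inside the parts then forbids reusing either of the two \emph{reduced} colors, so each color-saving step consumes two colors and multiplies the order by at most five. This recursion, anchored at the two-color value $5$ and the one-color value $2$, yields the $5^{r/2}$ and $2\cdot5^{(r-1)/2}$ shapes and explains the even/odd dichotomy. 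Applying the bound to the difference coloring of a sufficiently long interval forces a monochromatic or rainbow triangle, hence a monochromatic or rainbow solution to $x+y=z$, which supplies the upper bound on $GS(r)$.

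For the matching lower bound I would exhibit an explicit exact $r$-coloring of the appropriate initial interval that, under the above dictionary, realizes the iterated blow-up of the $C_5$-coloring as a genuine \emph{difference} coloring. The base case is the strict two-coloring placing $\{1,4\}$ in one color and $\{2,3\}$ in the other, which is exactly the $C_5/\overline{C_5}$ pattern and has no monochromatic Schur triple; the inductive step rescales a good $(r-2)$-coloring into five consecutive blocks and paints the between-block differences with two fresh colors in the pentagon pattern. I expect the main obstacle to lie precisely here: one must verify that this nesting keeps every Schur triple either two-colored (ruling out rainbow) or non-monochromatic, which requires controlling how sums cross block boundaries so that no new monochromatic or rainbow triple is created, and one must handle the parity bookkeeping for odd $r$, where a single leftover color is used on the innermost blocks. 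Checking the Gallai / rainbow-free structure of the resulting integer coloring against the characterizations in \cite{AxenovichIverson, GyarfasSarkozySeboSelkow}, and confirming that the construction is simultaneously exact and extremal, is the delicate part of the argument.
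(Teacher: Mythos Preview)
The paper does not prove Theorem~\ref{th2-1}; it is quoted from Budden \cite{Budden}, with the remark that Budden's argument rests on \cite{AxenovichIverson, ChungGraham, GyarfasSarkozySeboSelkow}. Your proposal is precisely that argument: pass from an exact $r$-coloring of $[1,n]$ to the edge-coloring of $K_{n+1}$ by differences, read the upper bound off Theorem~\ref{th2-1A}, and realize the lower bound by an iterated $C_5$/blow-up construction on the integer side. So your outline is correct and coincides with the intended route.

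Two small remarks. First, your sentence ``the integer problem becomes equivalent to bounding the order of an exact $r$-coloring of a complete graph\dots'' overstates things: the difference map sends integer colorings into edge-colorings but is far from surjective, so only the upper-bound direction comes for free from Theorem~\ref{th2-1A}; the lower bound genuinely requires producing an \emph{integer} coloring that attains the graph extremum, which is exactly the ``delicate part'' you flag. Second, for that delicate part the present paper supplies a clean alternative to the block/pentagon recursion you sketch: Lemmas~\ref{lem-1}--\ref{lem-2} and Theorem~\ref{thm-3} build \emph{palindromic} Gallai--Schur colorings of $[1,5^k-1]$ and $[1,2\cdot 5^k-1]$ by one- and two-color extensions $\chi\mapsto\langle \chi,r{+}1,\chi\rangle$ and $\chi\mapsto\langle \chi,r{+}1,\chi,r{+}2,\chi,r{+}2,\chi,r{+}1,\chi\rangle$. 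This gives the same extremal lengths as the blow-up picture but with a purely arithmetic verification, and the palindromic symmetry is what makes the ``sums crossing block boundaries'' check go through cleanly.
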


\begin{remark}\label{rem111} It is known  \cite{ACPPRT} that $\lim_{r \rightarrow \infty} \left({S}(r)\right)^{\frac{1}{r}} \geq \sqrt[5]{380} \approx 3.280626$,
while Theorem \ref{th2-1} gives us  $\lim_{r \rightarrow \infty} \left(GS(r)\right)^{\frac{1}{r}} = \sqrt{5} \approx 2.236$.
\end{remark}

 One of the main results used in \cite{Budden} concerns the Canonical Ramsey Theorem and was
 first proven (in a different context) by Chung and Graham \cite{ChungGraham}.  It is stated below as Theorem \ref{th2-1A}.
This result was also examined in more depth in both
\cite{AxenovichIverson} and \cite{GyarfasSarkozySeboSelkow}.  
Included in \cite{AxenovichIverson} is a complete characterization
of all extremal graphs for this result.

We make the following definition for use not just in Theorem \ref{th2-1A} but later in this article as well.

\begin{definition}\label{defn1}
Let $k,\ell,r \in \mathbb{Z}^+$ with $r \geq 3$.  Let $G(k,\ell;r)$ be the minimum integer 
such that every $r$-coloring of edges of the complete graph on $G(k,\ell;r)$  vertices admits either
a rainbow complete subgraph on $k$ vertices or a monochromatic complete subgraph on $\ell$ vertices.
\end{definition}
 
We now state Chung and Graham's result.

\begin{theorem}[\cite{AxenovichIverson, ChungGraham, GyarfasSarkozySeboSelkow}]\label{th2-1A}
For $r \geq 3$, we have
$$
G(3,3;r)=\begin{cases}
5^{\frac{r}{2}}+1 & \text{for $r$ even;}\\
2\cdot 5^{\frac{r-1}{2}}+1 & \text{for $r$ odd.}
\end{cases}
$$
\end{theorem}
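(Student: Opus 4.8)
The plan is to set $f(r)=G(3,3;r)-1$, so that $f(r)$ is the largest order of a complete graph admitting an $r$-coloring of its edges with neither a rainbow nor a monochromatic triangle, and to prove that $f(r)=5^{r/2}$ for $r$ even and $f(r)=2\cdot 5^{(r-1)/2}$ for $r$ odd. The engine of both bounds is Gallai's structure theorem \cite{Gallai, MR2063371}: any edge-coloring of a complete graph with no rainbow triangle admits a partition of the vertices into parts $V_1,\dots,V_m$ with $m\ge 2$ such that all edges between a fixed pair of parts receive a single color and the reduced coloring on the $m$ parts uses at most two colors. I will repeatedly use the classical fact $R(3,3)=6$, equivalently that the largest $2$-coloring of a complete graph with no monochromatic triangle has $5$ vertices, realized by the $2$-coloring of $K_5$ whose color classes are two complementary $5$-cycles (the pentagon pattern).

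For the lower bound I would build extremal colorings recursively, trading two extra colors for a factor of $5$ in the order. The pentagon pattern on $K_5$ has no monochromatic triangle and, using only two colors, no rainbow triangle, so $f(2)\ge 5$; likewise $f(1)=2$. Given an extremal coloring on $f(r-2)$ vertices in colors $\{1,\dots,r-2\}$, take five disjoint copies $B_1,\dots,B_5$ of it, color each copy internally as given, and color every edge between $B_i$ and $B_j$ with color $r-1$ or $r$ according to the pentagon pattern on the indices. A triangle inside one block is handled by induction; a triangle with exactly two vertices in one block has two equal cross-edges, so it is neither rainbow nor, since the third edge uses an old color, monochromatic; a transversal triangle meeting three blocks has all edges in $\{r-1,r\}$ and its three edges correspond to a triangle of the pentagon pattern, which uses both colors and so is neither rainbow nor monochromatic. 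Hence $f(r)\ge 5f(r-2)$, and iterating from the base cases yields the lower bound.

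For the upper bound I would argue by strong induction on $r$, the cases $r=1,2$ being immediate. Given an extremal coloring on $f(r)$ vertices, apply Gallai's theorem to obtain parts $V_1,\dots,V_m$ and a reduced two-coloring in colors $a,b$. Lifting a monochromatic triangle of the reduced graph to one vertex per part would give a monochromatic triangle, so the reduced $K_m$ has none; thus $m\le R(3,3)-1=5$. The key local observation is that if some part $V_i$ uses color $a$ on an internal edge and also sends color $a$ to some other part $V_j$, then two internal vertices of $V_i$ together with a vertex of $V_j$ form a monochromatic $a$-triangle. Consequently a reduced color may appear inside $V_i$ only if all of $V_i$'s cross-edges avoid that color, i.e.\ only if $V_i$ is monochromatic toward the outside. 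Writing $\epsilon_i=1$ when $V_i$ is outward-monochromatic (one reduced color still available inside, leaving $r-1$ usable colors) and $\epsilon_i=0$ otherwise (only $r-2$ usable), induction gives $|V_i|\le f(r-2+\epsilon_i)$.

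It then remains to bound the number of outward-monochromatic parts. If a reduced vertex meets only color $a$, the remaining $m-1$ vertices must be pairwise $b$-colored to avoid an $a$-triangle; for $m\ge 4$ this forces a $b$-triangle, so no such vertex exists, while for $m=3$ exactly one vertex (the apex of the star pattern) is monochromatic and for $m=2$ both trivially are. This yields the four recursions $f(r)\le 2f(r-1)$ for $m=2$, $f(r)\le f(r-1)+2f(r-2)$ for $m=3$, $f(r)\le 4f(r-2)$ for $m=4$, and $f(r)\le 5f(r-2)$ for $m=5$. Substituting the inductive formula, which satisfies $f(r-1)=2f(r-2)$ when $r$ is even and $f(r-1)=\tfrac52 f(r-2)$ when $r$ is odd, every right-hand side is at most $5f(r-2)$, with equality attained by the pentagon case $m=5$ (and also by $m=2$ when $r$ is odd). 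Hence $f(r)\le 5f(r-2)$, which by the inductive formula equals the claimed value, completing the induction and giving $G(3,3;r)=f(r)+1$. The main obstacle is organizational rather than deep: correctly invoking Gallai's partition and pinning down exactly which of the two reduced colors survives inside each part, after which the pentagon is forced to be extremal and only the short parity computation remains.
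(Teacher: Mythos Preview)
The paper does not supply its own proof of this theorem; it is quoted from the cited references \cite{AxenovichIverson, ChungGraham, GyarfasSarkozySeboSelkow}, so there is no in-paper argument to compare against. Your proposal is correct and is essentially the standard argument appearing in \cite{GyarfasSarkozySeboSelkow} and \cite{AxenovichIverson}: the lower bound via the pentagon blow-up $f(r)\ge 5f(r-2)$, and the upper bound via Gallai's partition theorem, the bound $m\le R(3,3)-1=5$ on the number of parts, and the observation that a reduced color can survive inside a part only when that part is outward-monochromatic. Your case analysis on the number of outward-monochromatic parts and the resulting recursions $2f(r-1)$, $f(r-1)+2f(r-2)$, $4f(r-2)$, $5f(r-2)$ is exactly how those references conclude; the parity check $f(r-1)=2f(r-2)$ for $r$ even and $f(r-1)=\tfrac52 f(r-2)$ for $r$ odd is the right way to see that all four cases are dominated by $5f(r-2)$. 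One minor point worth making explicit is that $f$ is monotone in the number of available colors (a coloring using at most $s$ colors is also one using at most $s+1$), which is what licenses the bound $|V_i|\le f(r-2+\epsilon_i)$ when the block $V_i$ happens to use fewer than $r-2+\epsilon_i$ colors.
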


Moving back to the topic of this article, we start by noting that
a triple $(x, y, z)$ with $x \leq y$ that satisfies $x+y=z$  is called a \emph{Schur triple}.  {Furthermore, if $x<y$, {it} is called a \emph{strict Schur triple}.}
The astute reader will notice that Schur triples have no condition of monotonicity even though Schur's name (and, hence, theorem) is
attached to them.  This is how the literature has come to reference them.  However, we will attach coloring attributes in the following definition.

\begin{definition} \label{defnGSTrip} Let $r,n \in \mathbb{Z}^+$.  Let $\chi$ be an $r$-coloring of $[1,n]$. If $x,y,z \in [1,n]$ with $x \leq y$
satisfy $x+y=z$ with either $\chi(x)=\chi(y)=\chi(z)$ or
$\chi(x),\chi(y),$ and $\chi(z)$ all distinct, then we
say $(x,y,z)$ is a {\it Gallai-Schur triple}. If $x<y$, we call it a \emph{strict Gallai-Schur triple}.
\end{definition}




The remainder of the paper is structured as follows:
in Section $2$ we give   upper
and lower bounds on strict  {Gallai-}Schur numbers;
in Section $3$ we prove that for
$b\geq 2$, letting $n(b)$ equal  $4b+10$ if $b$ is
even, and $4b+5$ if $b$ is odd, we have that every $3$-coloring of $[1,n(b)]$ admits
either a monochromatic or rainbow solution to $x+y+b=z$;
in Section $4$ we give asymptotic bounds on the minimum number of Gallai-Schur triples
over all $r$-colorings of $[1,n]$;
{in Section $5$ we give the minimum number of  monochromatic solutions to $x+y < z$
over all $3$-colorings of $[1,n]$ and investigate the minimum number of rainbow and monochromatic solutions to $x+y < z$
over all $3$-colorings of $[1,n]$}  {as well as the maximal number of rainbow solutions}; we present some open problems
in Section $6$.

\section{Strict Gallai-Schur Numbers}

\begin{definition}\label{defi-2}
Let $r$ be a positive integer. The least positive integer
$\widehat{GS}(r)$ such that every exact $r$-coloring of
$[1,\widehat{GS}(r)]$ admits a strict Gallai-Schur triple  is called a {\it strict
Gallai-Schur number}.
\end{definition}

In this section, we establish some bounds on $\widehat{GS}(r)$.  The typical
approach for finding lower bounds on Ramsey-type numbers is to find a
particular coloring that avoids the monochromatic structure.  We use
this same approach in the Gallai-Schur setting and make the following
definition for the particular type of colorings we will focus on.

\begin{definition}
Let $\chi$ be an $r$-coloring of $[1,n]$. If there is no
Gallai-Schur triple under $\chi$, we call
$\chi$ a {\it Gallai-Schur coloring} of $[1,n]$. If, in addition, $\chi(i)=\chi(n+1-i)$
for $i=1,2,\ldots,n$,  we call $\chi$ a {\it palindromic Gallai-Schur  coloring}.
\end{definition}

For an $r$-coloring $\chi$ of $[1,n]$, we shall write $\chi$ as a
string $\chi(1)\chi(2)\ldots\chi(n)$ of length $n$. Letting
$a=a_1a_2 \dots a_m$ and $b=b_1b_2 \dots b_m$, we use
$\langle a,b \rangle $ to
denote the  {concatenation of }strings $a_1a_2\ldots a_mb_1b_2\ldots b_m$.

The main result in this section is Theorem \ref{thm2.3}.  Our approach is to use
a sequence of results that build up the length of palindromic
Gallai-Schur colorings.  Theorem \ref{thm-4} then makes the connection between
palindromic
Gallai-Schur colorings
and colorings that avoid strict Gallai-Schur triples.

The first lemma we present shows that we can more than double the length of a
palindromic Gallai-Schur coloring by increasing the number of colors by 1.

\begin{lemma}\label{lem-1}
Let $n,r \in \mathbb{Z}^+$ with $r\geq 2$. If $\chi$ is a
palindromic Gallai-Schur  $r$-coloring of $[1,n]$, then $\chi^{*}=\langle \chi, r+1, \chi \rangle$ is a
palindromic Gallai-Schur  $(r+1)$-coloring of $[1,2n+1]$.
\end{lemma}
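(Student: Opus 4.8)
The plan is to verify the three required properties of $\chi^{*}$ in turn: that it uses $r+1$ colors, that it is palindromic, and that it contains no Gallai-Schur triple. The first is immediate, since $\chi$ uses the colors $1,\dots,r$ and the single middle entry introduces the new color $r+1$. For the palindromic property, writing $m=2n+1$, one checks $\chi^{*}(i)=\chi^{*}(m+1-i)$ directly: the middle position $n+1$ is fixed, while for $i\le n$ the position $m+1-i=2n+2-i$ lands in the right copy and the identity $\chi^{*}(i)=\chi(i)=\chi(n+1-i)=\chi^{*}(2n+2-i)$ follows from the palindromicity of $\chi$.

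The substance is to show $\chi^{*}$ has no Gallai-Schur triple, and I would argue by contradiction. Suppose $(x,y,z)$ with $x\le y$ and $x+y=z$ is monochromatic or rainbow under $\chi^{*}$. I first record that $x\le n$: if $x\ge n+1$ then $y\ge x\ge n+1$ forces $z=x+y\ge 2n+2>2n+1$, which is impossible. Since the color $r+1$ occurs only at the single position $n+1$, no monochromatic triple can carry that color. I then split on the location of $z$. If $z\le n$, all three of $x,y,z$ lie in the left copy, where $\chi^{*}$ agrees with $\chi$, so $(x,y,z)$ is already a Gallai-Schur triple under $\chi$, a contradiction. If $z=n+1$, then $x,y\le n$ and $x+y=n+1$, so by palindromicity $\chi(x)=\chi(n+1-x)=\chi(y)$; hence the triple is not rainbow, and since $\chi^{*}(z)=r+1$ differs from $\chi^{*}(x)=\chi(x)$ it is not monochromatic either.

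The remaining case $z\ge n+2$ is the heart of the argument, and I would sub-divide it by the location of $y$. When $y=n+1$ one computes $z=x+n+1$, so $\chi^{*}(z)=\chi(x)=\chi^{*}(x)$ while $\chi^{*}(y)=r+1$; this makes the triple neither monochromatic (as $\chi^{*}(y)$ differs) nor rainbow (as $\chi^{*}(x)=\chi^{*}(z)$). When $y\ge n+2$ lies in the right copy, setting $y'=y-(n+1)$ and $z'=z-(n+1)$ yields $x+y'=z'$ with all three indices in $[1,n]$ and with colors unchanged, reducing directly to a Gallai-Schur triple under $\chi$. The genuinely delicate sub-case is $x,y\le n$ with $z\ge n+2$, where $z$ lies in the right copy but both summands lie on the left, so no naive translation of a copy produces a solution in $[1,n]$. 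Here I would again invoke palindromicity: put $\bar x=n+1-x$ and $\bar y=n+1-y$, so that $\bar x+\bar y=2n+2-z=w$, and note $w\in[1,n]$ because $n+2\le z\le 2n$. Since $\chi(\bar x)=\chi(x)=\chi^{*}(x)$, $\chi(\bar y)=\chi(y)=\chi^{*}(y)$, and $\chi(w)=\chi(n+1-w)=\chi(z-(n+1))=\chi^{*}(z)$, the reflected triple $(\bar x,\bar y,w)$ has exactly the color pattern of $(x,y,z)$ and solves $\bar x+\bar y=w$ in $[1,n]$; thus a monochromatic or rainbow $(x,y,z)$ forces a Gallai-Schur triple under $\chi$, the final contradiction.

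I expect the main obstacle to be precisely this last sub-case, together with $z=n+1$: these are exactly the configurations that cannot be resolved by translating a single copy back to $[1,n]$, and they are where the palindromic hypothesis is indispensable, via the reflection $i\mapsto n+1-i$. Care is also needed to confirm that the reflected indices and $w$ remain within $[1,n]$ in every instance, which is where the bounds $z\le 2n$ (from $x,y\le n$) and $z\ge n+2$ enter.
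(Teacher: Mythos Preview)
Your proof is correct and follows essentially the same case analysis as the paper's: both argue by contradiction, dispose of $x\ge n+1$, handle $z\le n$, $z=n+1$, $y=n+1$, and $y\ge n+2$ identically, and then use palindromicity in the residual sub-case $x,y\le n$, $z\ge n+2$. The only minor difference is in that last sub-case: the paper reflects just $x$ and shifts $z$ to obtain $(n+1-x)+(z-n-1)=y$, whereas you reflect both $x$ and $y$ to obtain $(n+1-x)+(n+1-y)=2n+2-z$; both reductions are valid and equally short.
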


\begin{proof}
Clearly, $\chi^{*}$ is a  {palindromic coloring}. Suppose to the contrary that
there is a Gallai-Schur  triple $(x,y,z)$ with
$x\leq y<z$ under $\chi^{*}$. Since $\chi$ is a
Gallai-Schur coloring, we have $z\geq n+1$. If $z=n+1$,
since $n+1$ is the only integer of
color $r+1$, then $(x,y,z)$ has to be rainbow.
Note that $x\leq
y\leq n$ and $y=n+1-x$.
Since $\chi$ is  {a palindromic coloring}, it follows that
 {$\chi^{*}(x)=\chi(x)=\chi(n+1-x)=\chi^*(n+1-x)
=\chi^{*}(y)\neq \chi^{*}(z)$}, a
contradiction. Thus $z\geq n+2$. If $x\geq n+1$,
then $z=x+y\geq
2n+2$, a contradiction. Thus $x\leq n$.

We finish the proof by considering 3 cases that exhaust all possibilities.

\setcounter{case}{0}
\begin{case}
$y\geq n+2$.
Let $y'=y-n-1$ and $z'=z-n-1$. From Remark \ref{rem2.2}, we have $\chi^*(i)=\chi^*(i+n+1)$ for
$i\in [1,n]$, so that $\chi^*(y')=\chi^*(y)$ and
$\chi^*(z')=\chi^*(z)$. Then $(x,y',z')$ or $(y',x,z')$ forms a rainbow or
monochromatic Schur triple under $\chi$, which contradicts the fact
that $\chi$ is a Gallai-Schur $r$-coloring of $[1,n]$.
\end{case}

\begin{case}
$y= n+1$.
Since $\chi^*(i)=\chi^*(i+n+1)$ for $i\in [1,n]$, it follows
that $\chi^*(x)=\chi^*(x+y)=\chi^*(z)$. However, $y$ is the only
integer of color $r+1$. Hence, $\chi^*(x)=\chi^*(z)\neq
\chi^*(y)$, a contradiction.
\end{case}

\begin{case}
$y\leq n$.
Let $x'=n+1-x$ and $z'=z-n-1$, {so that $x', z'\in[1,n]$}. Since $\chi$ is  {a palindromic coloring}, it
follows that $\chi^*(x') =\chi^*(x)$.  Since
$\chi^*(i)=\chi^*(i+n+1)$ for $i=1,2,\ldots,n$, it follows that
$\chi^*(z')=\chi^*(z)$. Moreover, we have $x'+z'=z-x=y$. Thus
$(x',z',y)$ or $(z',x',y)$ forms a Gallai-Schur  triple under
$\chi$, which contradicts {the fact that $\chi$} is a Gallai-Schur
$r$-coloring of $[1,n]$.
\end{case}
\end{proof}

The next   lemma is used  only  in furtherance of this section's main result.
As such, we state the lemma  but place the proof (which is similar to
the proof of Lemma \ref{lem-1}) in the Appendix.

\begin{lemma}\label{lem-2}
Let $n,r \in \mathbb{Z}^+$ with $r\geq 2$. If $\chi$ is a
palindromic Gallai-Schur  $r$-coloring of $[1,n]$, then $\chi^{**}= \langle \chi, r+1, \chi, r+2,
\chi, r+2, \chi, r+1, \chi \rangle$ is
a palindromic Gallai-Schur  $(r+2)$-coloring of $[1,5n+4]$.
\end{lemma}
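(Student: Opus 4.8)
The plan is to follow the template of Lemma~\ref{lem-1}, but to organize everything through residues modulo $n+1$, which makes the extra inserted color manageable. I would first record the shape of $\chi^{**}$: the four inserted singletons occupy exactly the positions $n+1,\,2n+2,\,3n+3,\,4n+4$, that is, the multiples of $n+1$ lying in $[1,5n+4]$, and they carry the colors $r+1,\,r+2,\,r+2,\,r+1$; every other position $i$ lies in one of the five copies of $\chi$ and satisfies $\chi^{**}(i)=\chi(\overline{i})$, where $\overline{i}\in[1,n]$ is the residue of $i$ modulo $n+1$. Reading the string against its reverse, the singleton colors are symmetric and the five blocks are symmetric copies of the palindrome $\chi$, so $\chi^{**}(i)=\chi^{**}(5n+5-i)$ and $\chi^{**}$ is palindromic. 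The single structural fact I would then exploit is that a position is special if and only if it is divisible by $n+1$, while on all remaining positions the color depends only on the residue $\overline{i}$.

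Now suppose, for contradiction, that $(x,y,z)$ is a Gallai-Schur triple under $\chi^{**}$ with $x\le y<z$. Passing to residues in $x+y=z$ gives $\overline{x}+\overline{y}\equiv\overline{z}\pmod{n+1}$, and I would split on how many of $x,y,z$ are special (divisible by $n+1$). If none is special, then $\overline{x}+\overline{y}-\overline{z}$ is a multiple of $n+1$ lying in $[2-n,\,2n-1]$, hence equals $0$ or $n+1$. When it is $0$ we get $\overline{x}+\overline{y}=\overline{z}$ directly, a Schur triple in $[1,n]$ with the same colors as $(x,y,z)$; when it is $n+1$, setting $x'=n+1-\overline{x}$ and using the palindromic relation $\chi(x')=\chi(\overline{x})$ turns the equation into $x'+\overline{z}=\overline{y}$, again a Schur triple in $[1,n]$ carrying the same multiset of colors. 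Either way $\chi$ admits a Gallai-Schur triple, the desired contradiction; these two subcases are exactly the analogues of Cases~1 and~3 of Lemma~\ref{lem-1}.

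The special positions are where the real work lies, and the decisive observation is a counting fact: since the special positions are precisely the multiples of $n+1$ in range, $\overline{x}+\overline{y}\equiv\overline{z}$ forces the number of special entries among $x,y,z$ to be $0$, $1$, or $3$, never exactly $2$. If all three are special, then $x+y=z$ has only the solutions drawn from $\{n+1,2n+2,3n+3,4n+4\}$, and a one-line check shows each such solution uses two equal new colors and one different one, so the triple is neither monochromatic nor rainbow. If exactly one is special, the triple must be rainbow (its special color exceeds every block color, so it cannot be monochromatic), yet I would derive a forced color coincidence among the two block entries: when $z$ is special, $\overline{x}+\overline{y}=n+1$ gives $\chi(\overline{x})=\chi(n+1-\overline{x})=\chi(\overline{y})$ by the palindrome, and when $x$ or $y$ is special the congruence collapses to $\overline{y}=\overline{z}$ or $\overline{x}=\overline{z}$; in every instance two of the three colors agree, contradicting rainbowness. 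The main obstacle is thus concentrated in these residue-$0$ cases---ruling out exactly two special entries and checking that no new color can complete a monochromatic or rainbow triple---but once the divisibility viewpoint is in place each sub-case reduces to a short arithmetic verification, which is why the argument, though longer than that of Lemma~\ref{lem-1}, is safely deferred to the Appendix.
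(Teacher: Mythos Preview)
Your proof is correct and, in fact, cleaner than the paper's. The paper does not argue directly: it first establishes an auxiliary lemma (Lemma~A.1 in the Appendix) showing that the four-block prefix $\langle \chi, r+1, \chi, r+2, \chi, r+2, \chi\rangle$ is already a Gallai--Schur $(r+2)$-coloring of $[1,4n+3]$, via a nested case split on which interval $[jn+j+1,(j+1)n+j]$ contains $z$ (and then $y$, and then $x$). Only afterwards is Lemma~\ref{lem-2} deduced by a further case analysis that pushes $z$ into the last block and reduces back to the auxiliary lemma. Your residue-mod-$(n+1)$ viewpoint collapses all of that: the single structural observation that the special positions are exactly the multiples of $n+1$ makes the parity-style count ``the number of special entries among $x,y,z$ is never exactly two'' immediate, and the remaining three cases (zero, one, or three special entries) each fall to a one-line verification. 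What the paper's route buys is reusability---the intermediate four-block lemma could in principle be cited elsewhere---but for the statement at hand your argument is both shorter and more transparent, and it makes clear why the particular pattern $r+1,\,r+2,\,r+2,\,r+1$ of inserted colors works (any two-color word on four letters with no Schur triple among its indices would do).
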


Using Lemma \ref{lem-2}, the next result easily follows.

\begin{theorem}\label{thm-3}
For every $k$, there is a palindromic Gallai-Schur  $2k$-coloring of
$[1,5^{k}-1]$ and  a palindromic Gallai-Schur  $(2k+1)$-coloring of
$[1,2\cdot 5^{k}-1]$.
\end{theorem}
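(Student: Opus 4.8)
The plan is to prove the two claims by induction, using Lemma \ref{lem-2} to generate the even-indexed colorings and a single application of Lemma \ref{lem-1} to pass from each even-indexed coloring to the corresponding odd-indexed one. The key observation is purely bookkeeping: Lemma \ref{lem-2} sends a palindromic Gallai-Schur coloring of length $n$ to one of length $5n+4$ while raising the number of colors by $2$, and Lemma \ref{lem-1} sends one of length $n$ to one of length $2n+1$ while raising the color count by $1$. Both operations preserve the palindromic Gallai-Schur property by the cited lemmas, so the only real work is to check that the length recursions telescope correctly and to supply a valid base case.

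For the even-indexed family I would induct on $k \geq 1$. For the base case $k=1$ I would exhibit an explicit palindromic Gallai-Schur $2$-coloring of $[1,4]$: since only two colors are used, no rainbow triple can occur, so it suffices to avoid monochromatic Schur triples, and the string $1221$ does this (one checks the sums $1+1,\,1+2,\,1+3,\,2+2$ directly, and it is palindromic since $\chi(i)=\chi(5-i)$). For the inductive step, given a palindromic Gallai-Schur $2k$-coloring $\chi$ of $[1,5^{k}-1]$, Lemma \ref{lem-2} yields a palindromic Gallai-Schur $(2k+2)$-coloring of length $5(5^{k}-1)+4$; since $5(5^{k}-1)+4 = 5^{k+1}-1$ and $2k+2 = 2(k+1)$, this is exactly the required $2(k+1)$-coloring of $[1,5^{k+1}-1]$, closing the induction.

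The odd-indexed family then follows immediately by applying Lemma \ref{lem-1} to the even-indexed coloring just constructed: from the palindromic Gallai-Schur $2k$-coloring of $[1,5^{k}-1]$ one obtains a palindromic Gallai-Schur $(2k+1)$-coloring of length $2(5^{k}-1)+1 = 2\cdot 5^{k}-1$, which is precisely the claimed length.

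Since both lemmas are already established, the argument is essentially arithmetic, which is why the paper can assert it ``easily follows''; the only genuine content is choosing a legitimate base case and confirming that the recursions $n \mapsto 5n+4$ and $n \mapsto 2n+1$ land on $5^{k+1}-1$ and $2\cdot 5^{k}-1$. I expect the single point requiring care to be verifying the hypothesis $r\geq 2$ of Lemmas \ref{lem-1} and \ref{lem-2} throughout, which holds because $r=2k\geq 2$ for $k\geq 1$; if the statement is meant to include $k=0$, the degenerate colorings (the empty coloring of $[1,0]$ and the one-symbol coloring of $[1,1]$) should be recorded separately, since the lemmas do not apply when $r<2$.
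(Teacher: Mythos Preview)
Your proposal is correct and follows essentially the same approach as the paper: induction on $k$ with base coloring $1221$, using Lemma~\ref{lem-2} to step from a $2k$-coloring of $[1,5^k-1]$ to a $2(k+1)$-coloring of $[1,5^{k+1}-1]$, and Lemma~\ref{lem-1} to obtain the $(2k+1)$-coloring of $[1,2\cdot 5^k-1]$. The only cosmetic difference is that the paper also records the odd base case $122131221$ explicitly, whereas you (equivalently) obtain it by applying Lemma~\ref{lem-1} to $1221$.
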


\begin{proof}
We prove the theorem by induction on $k$. For $k=1$, it is easy to
see that $1221$ is a palindromic Gallai-Schur
 $2$-coloring of $[1,4]$
and $122131221$ is a palindromic Gallai-Schur
$3$-coloring of $[1,9]$. Assume $\chi$ is a palindromic Gallai-Schur  $(2k-2)$-coloring of $[1,5^{k-1}-1]$. We will prove that there exists a palindromic Gallai-Schur  $2k$-coloring of $[1,5^{k}-1]$ and a palindromic Gallai-Schur  $(2k+1)$-coloring of $[1,2\cdot 5^{k}-1]$.  {By
Lemma \ref{lem-2}, $\chi^{**}$ is a Gallai-Schur
palindromic $2k$-coloring of $[1,5^{k}-1]$.
Then it follows from Lemma \ref{lem-1} that $(\chi^{**})^*$ is a
palindromic Gallai-Schur  $(2k+1)$-coloring of $[1,2\cdot 5^{k}-1]$.}
Thus
the theorem follows.
\end{proof}

 {Although Budden's result in Theorem \ref{th2-1} provides us with the
 exact values of the Gallai-Schur numbers,   Theorem \ref{thm-3} allows us to provide a useful (weaker)
 lower bound for
the Gallai-Schur numbers. The usefulness is with the relationship between
palindromic colorings and strict Gallai-Schur numbers.
We will refer to a coloring with no strict Gallai-Schur triple as 
a {\it strict Gallai-Schur coloring}.

\begin{theorem}\label{thm-4}
Let $n,r \in \mathbb{Z}^+$ with $r\geq 3$. If $\chi$ is a palindromic Gallai-Schur
 $r$-coloring of $[1,n]$, then
 \[
\chi^{+}= \langle \chi, r+1, \chi, r+1, \chi, r+2, \chi, r+1, \chi, r+2,
\chi, r+2,\chi, r+2, \chi, r+1, \chi\rangle
\]
 is a strict
Gallai-Schur $(r+2)$-coloring of $[1,9n+8]$.
Hence, 
\end{theorem}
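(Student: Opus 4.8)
The plan is to follow the strategy of Lemma~\ref{lem-1} while reading positions in ``base $n+1$''. The eight inserted colors sit precisely at the multiples $j(n+1)$ for $1\le j\le 8$, and the $j$-th copy of $\chi$ fills the offsets $1,\dots,n$ after $(j-1)(n+1)$. So I would write each $x\in[1,9n+8]$ uniquely as $x=a(n+1)+\alpha$ with $a\in\{0,\dots,8\}$ and $\alpha\in[0,n]$, calling $x$ a \emph{block entry} when $\alpha\ge1$ (with color $\chi^{+}(x)=\chi(\alpha)\in\{1,\dots,r\}$) and a \emph{separator} when $\alpha=0$ (forcing $a\ge1$, with color $\sigma(a)\in\{r+1,r+2\}$), where $\sigma=(r+1,r+1,r+2,r+1,r+2,r+2,r+2,r+1)$ lists the inserted colors (both of which occur). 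Taking a putative strict Gallai-Schur triple $(x,y,z)$ with $x<y$ and $x+y=z$, and writing $y=b(n+1)+\beta$, $z=c(n+1)+\gamma$, everything hinges on whether the offsets carry: $\alpha+\beta\le n$ gives $c=a+b$ and $\gamma=\alpha+\beta$, whereas $\alpha+\beta\ge n+1$ gives $c=a+b+1$ and $\gamma=\alpha+\beta-(n+1)$.

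First I would dispose of the cases meeting a separator. If exactly one of $x,y$ is a separator, say $\alpha=0$, there is no carry and $z$ has offset $\gamma=\beta\ge1$, so $\chi^{+}(z)=\chi(\beta)=\chi^{+}(y)$ while $\chi^{+}(x)\in\{r+1,r+2\}$ differs; two colors agree, so the triple is neither monochromatic nor rainbow. If $x$ and $y$ are both separators, then so is $z$, all three colors lie in the two-element set $\{r+1,r+2\}$ (ruling out rainbow), and monochromaticity would force $a<b$, $a+b\le8$, and $\sigma(a)=\sigma(b)=\sigma(a+b)$. This is the combinatorial heart of the theorem: the color class of $r+1$ is $\{1,2,4,8\}$ and that of $r+2$ is $\{3,5,6,7\}$, and a finite check shows neither class contains $a<b$ with $a+b$ in the same class and $a+b\le8$. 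This is exactly where strictness is indispensable, since $a=b=1$ yields the genuinely monochromatic but non-strict triple $(n+1,n+1,2n+2)$ in color $r+1$---the sole reason $\chi^{+}$ avoids only \emph{strict} Gallai-Schur triples.

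It then remains to treat $x,y,z$ all in blocks. With no carry, $(\alpha,\beta,\alpha+\beta)$ is itself a Schur triple in $[1,n]$ carrying the same three colors $\chi(\alpha),\chi(\beta),\chi(\alpha+\beta)$ as $(x,y,z)$, so the hypothesis that $\chi$ has no Gallai-Schur triple (strict or not) forbids these colors from being all equal or all distinct. With a carry and $\gamma\ge1$, I would invoke the palindrome: setting $\alpha^{*}=n+1-\alpha$ gives $\chi(\alpha^{*})=\chi(\alpha)$ and $\alpha^{*}+\gamma=\beta$, so $(\alpha^{*},\gamma,\beta)$ is a genuine Schur triple in $[1,n]$ whose colors are again those of $(x,y,z)$, and the Gallai-Schur property of $\chi$ closes the case. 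The leftover boundary is $\gamma=0$, i.e. a carry with $\alpha+\beta=n+1$, where $z$ becomes a separator; but then the palindrome forces $\chi(\alpha)=\chi(n+1-\alpha)=\chi(\beta)$, so two of the three colors coincide and the triple is again neither monochromatic nor rainbow.

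I expect the main obstacle to be organizational rather than computational---keeping the carry/no-carry dichotomy consistent with which of $\alpha,\beta,\gamma$ vanish. The single genuinely non-obvious step is the substitution $\alpha\mapsto n+1-\alpha$ in the carry case, which turns a ``wrapped'' sum into an honest within-block Schur triple; and the finite verification that $\{1,2,4,8\}$ and $\{3,5,6,7\}$ are strict-sum-free inside $[1,8]$ is the design feature on which the entire construction rests.
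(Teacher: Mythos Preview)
Your proof is correct and follows essentially the same approach as the paper: partition $[1,9n+8]$ into block positions (the nine copies of $\chi$) and separators (the eight inserted colors), verify that the separator color classes $\{1,2,4,8\}$ and $\{3,5,6,7\}$ contain no strict Schur triple, and reduce block-entry triples to Gallai-Schur triples in $[1,n]$ using the palindrome substitution $\alpha\mapsto n+1-\alpha$ in the carry case. The paper organizes the block-entry reduction via a minimal-$z$ argument rather than your direct base-$(n+1)$ offset arithmetic, but the content is the same; one small inaccuracy is that $(n+1,n+1,2n+2)$ is not the \emph{sole} non-strict obstruction, since $(2n+2,2n+2,4n+4)$, $(4n+4,4n+4,8n+8)$, and $(3n+3,3n+3,6n+6)$ are also monochromatic.
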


\begin{proof}
Suppose, for a contradiction, that there is a Gallai-Schur triple
$(x,y,z)$ with $x<y<z$ under $\chi^+$.  {Consider such a triple }$(x,y,z)$  {with minimal $z$}.
Let $A=\{n+1, 2n+2, 3n+3, 4n+4, 5n+5, 6n+6, 7n+7, 8n+8\}$. The
coloring  {string} of $A$ under $\chi^+$ is
$r+1,r+1,r+2,r+1,r+2,r+2,r+2,r+1$. It is easy to see that
there is no  monochromatic strict Schur triple in $A$ under
$\chi^+$. Note that if two of $x,y,z$ are in $A$ then all of them
must be in $A$,  {but then $(x,y,z)$ is not a strict Gallai-Schur triple.}
Thus at least two of $x,y,z$ are not in $A$. If $z\in
A$,  {that is $z=in+i$ for some $i\in [1,8]$, then $\chi^+(z)\geq r+1$}. From $x,y\notin A$ we infer that $\chi^+(x) =\chi^+(in+i-x)
 {=\chi^+(n+1-x)}
=\chi^+(y)<r+1$, which
contradicts the assumption that $(x,y,z)$
is  {a Gallai-Schur triple with $x<y<z$}.  {Hence, $z \not\in A$.} If
$x\in A$; that is, $x=in+i$ for some
$i\in [1,8]$, then {$\chi^+(x)\geq r+1$ and  {(by construction)}}
$\chi^+(y)=\chi^+(y+in+i)=\chi^+(z)<r+1$,
a contradiction.  {Hence, $x \not\in A$.  By an identical argument we obtain $y \not\in A$.}
Hence we may assume that none of $x,y,z$ is in $A$.

Let $I_i=[in+i+1,(i+1)n+i]$ for $i=0,1,2,
\ldots,8$.  Clearly, $x,y,z\in \bigcup_{i=0}^8
I_i$. First we show that $x\in I_0$.
Otherwise, setting $x'=x-n-1$
 and $z'=z-n-1$,
it is easy to see that $\chi^+(x')
=\chi^+(x)$, $\chi^+(z')=\chi^+(z)$,
$x'<z'$, and $x'+y=z'$ so that $(x',y,z')$
is a Gallai-Schur triple with $x'<y<z'$
under $\chi^+$, which contradicts
the minimality of $z$. Thus, $x\in I_0$.

Next we show that $y$ and $z$ must be in the different  $I_i$'s. Suppose,  {for a contradiction,} that there exists $i$ such that $y,z\in I_i$. Since $\chi$ is  {a Gallai-Schur coloring}, it follows that $i\geq 1$. Let $y'=y-i(n+1)$ and $z'=z-i(n+1)$. Then $\chi^+(y')=\chi^+(y)$, $\chi^+(z')=\chi^+(z)$ and $x+y'=z'$ so that $(x,y',z')$ is a Gallai-Schur triple under $\chi$, again a contradiction to the minimality of $z$. Thus, $y$ and $z$ are in the different intervals.

If both $x$ and $y$ are in $I_0$, then $z\in I_1$. Let $x'=n+1-x$ and $z'=z-n-1$. Since $\chi$ is {a palindromic coloring}, we have $\chi^+(x')=\chi^+(x)$. Moreover, $\chi^+(z')=\chi^+(z)$ and $x'+z'=z-x=y$. Hence, $(x',z',y)$ is a Gallai-Schur triple under $\chi$, contradicting the fact that $\chi$ is
{a Gallai-Schur coloring}. Thus $y \not \in I_0$.
We are left with the case $x\in I_0$, $y\in I_i$ with $i\geq 1$ and $z\in I_{i+1}$. Let $x'=n+1-x$, $y'=y-i(n+1)$ and $z'=z-(i+1)(n+1)$. Similarly, $\chi^+(x')= \chi^+(x)$, $\chi^+(y')=\chi^+(y)$, $\chi^+(z')=\chi^+(z)$ and $x'+z'=z-x-i(n+1)=y'$ implying that $(x',z',y')$ is a Gallai-Schur triple under $\chi$, which contradicts the fact that $\chi$ is {a Gallai-Schur coloring}. 

 {We may now} conclude that $\chi^+$ is a strict Gallai-Schur $(r+2)$-coloring of $[1,9n+8]$.
\end{proof}

Theorem \ref{thm-4}, together with Theorem \ref{th2-1},
will provide us with a lower bound for $ \widehat{GS}(r)$, while the next lemma will give
us an upper bound.

\begin{lemma}\label{lem2.5} Let $\{a_i\}^n_{i=1}$ be an increasing sequence of non-negative integers
with no $3$-term arithmetic progression with $n \geq GS(r)$.  Then
every $r$-coloring of  {$[1,a_n]$} contains a
strict Gallai-Schur triple.
\end{lemma}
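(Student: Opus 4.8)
The plan is to transfer the problem into the Gallai-Ramsey setting for triangles, exploiting the identity $GS(r) = G(3,3;r)$ recorded in Theorems \ref{th2-1} and \ref{th2-1A}. I would fix an arbitrary $r$-coloring $\chi$ of $[1,a_n]$. Since $\{a_i\}_{i=1}^n$ is increasing, every difference $a_j - a_i$ with $i<j$ lies in $[1, a_n - a_1] \subseteq [1,a_n]$, so I can define an $r$-coloring $\psi$ of the edges of the complete graph on vertex set $\{a_1,\dots,a_n\}$ by $\psi(\{a_i,a_j\}) = \chi(a_j - a_i)$ whenever $a_i < a_j$. This is the standard ``difference coloring'' underlying the correspondence between Schur-type and triangle-type Gallai-Ramsey numbers, and the first step is simply to check that it is well defined, i.e.\ that all differences land in $[1,a_n]$.

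The second step is to apply the graph-theoretic bound. The graph has $n$ vertices and $n \geq GS(r) = G(3,3;r)$ by hypothesis, so by Definition \ref{defn1} the coloring $\psi$ must contain either a rainbow triangle or a monochromatic triangle. Taking such a triangle with vertices $a_i < a_j < a_k$ and setting $x = a_j - a_i$, $y = a_k - a_j$, $z = a_k - a_i$, one gets $x+y=z$, and the three edges receive colors $\chi(x)$, $\chi(y)$, $\chi(z)$. Hence a monochromatic triangle yields $\chi(x)=\chi(y)=\chi(z)$ and a rainbow triangle yields $\chi(x),\chi(y),\chi(z)$ pairwise distinct, so in either case $(x,y,z)$ (after swapping $x$ and $y$ if needed) is a monochromatic or rainbow Schur triple, that is, a Gallai-Schur triple.

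The final step is where the no-$3$-AP hypothesis enters, and it is what upgrades the triple to a \emph{strict} one. If $x=y$, that is $a_j - a_i = a_k - a_j$, then $a_i,a_j,a_k$ would form a three-term arithmetic progression among the terms of the sequence, contradicting the hypothesis; hence $x \neq y$. Reordering so that the smaller of $x,y$ comes first produces a strict Gallai-Schur triple in the sense of Definition \ref{defnGSTrip}.

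I expect the only genuine subtlety to be the bookkeeping of the first step: verifying that the difference coloring is well defined and that passing from a coloring of the integers to a coloring of the edges preserves exactly the monochromatic/rainbow dichotomy (and noting that $G(3,3;r)$ applies to every $r$-coloring, not merely exact ones). Once this correspondence is set up, invoking $GS(r)=G(3,3;r)$ and reading off strictness from the $3$-AP-free condition are immediate, so there is no genuinely hard computational obstacle.
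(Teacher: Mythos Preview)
Your proposal is correct and follows essentially the same approach as the paper: define the difference edge-coloring on $K_n$ with vertex set $\{a_1,\dots,a_n\}$, invoke $n\geq GS(r)=G(3,3;r)$ to obtain a monochromatic or rainbow triangle, and read off a Schur triple from the edge differences, using the $3$-AP-free hypothesis to guarantee $x\neq y$. The only cosmetic difference is the labeling of $x$ and $y$ (the paper sets $x=a_k-a_j$, $y=a_j-a_i$), and you are slightly more explicit about well-definedness of the difference coloring and about $G(3,3;r)$ not requiring exactness.
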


\begin{proof}
We will show that there exist $a_i<a_j<a_k$ such that
 { {$(a_k-a_j, a_j-a_i, a_k-a_i)$ with $a_k-a_j \neq  a_j-a_i$  is a  strict Gallai-Schur triple under any $r$-coloring $\chi$ of $[1,a_n]$.}} 
 To this end, label the vertices of the complete graph $K_n$ by
$a_1,a_2,\ldots, a_n$.  {For any $r$-coloring $\chi$ of $[1,a_n]$, we color each edge $a_ia_j$ of $K_n$  by the color $\chi(|a_j-a_i|)$.}

 Since $n \geq GS(r)$, recalling Definition \ref{defn1},  it follows from Theorems \ref{th2-1A} and \ref{th2-1} that $n \geq G(3,3;r)=GS(r)$
so that $K_n$  contains a rainbow or
monochromatic triangle.  Let  {$(a_i,a_j,a_k)$} be such a triangle {with $a_i < a_j <a_k$}.  Since $a_k-a_i =
(a_k-a_j)+(a_j-a_i)$ and
$a_k-a_j \neq a_j-a_i$ (because $a_i, a_j,
a_k$ does not form a $3$-term arithmetic progression), by setting $x=a_k-a_j$, $y=a_j-a_i$ and $z=a_k-a_i$, we see that $(x,y,z)$ is the desired triple  {under $\chi$, an arbitrary   $r$-coloring  of $[1,a_n]$}.
\end{proof}

\begin{theorem}\label{thm2.3}
For $r\geq 5$, we have
$
f(r) \leq \widehat{GS}(r)<\frac{43}{4}\cdot\left(3^{\log_2 \sqrt{5}}\right)^{r},
$
where
$$
f(r)=\begin{cases}
\frac{9}{5} \cdot 5^{\frac{r}{2}}+9 & \text{for $r$ even;}\\
\frac{18}{5}\cdot 5^{\frac{r-1}{2}}+9& \text{for $r$ odd.}
\end{cases}
$$
\end{theorem}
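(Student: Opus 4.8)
The plan is to prove the two inequalities separately. For the lower bound $f(r)\le\widehat{GS}(r)$ I would, for each $r\ge 5$, exhibit an exact $r$-coloring of $[1,f(r)-1]$ that contains no strict Gallai-Schur triple; this forces $\widehat{GS}(r)\ge f(r)$ by Definition \ref{defi-2}. For the upper bound I would invoke Lemma \ref{lem2.5} with an explicit dense $3$-term-AP-free sequence and then substitute the values of $GS(r)$ from Theorem \ref{th2-1}.

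\textbf{Lower bound.} I would start from Theorem \ref{thm-3}, which supplies the longest palindromic Gallai-Schur colorings available, and feed them into Theorem \ref{thm-4}, which turns a palindromic Gallai-Schur $s$-coloring of $[1,n]$ into a strict Gallai-Schur $(s+2)$-coloring of $[1,9n+8]$. Taking $s=r-2$: for even $r$ I would use the palindromic Gallai-Schur $(r-2)$-coloring of $[1,5^{r/2-1}-1]$, and for odd $r$ the $(r-2)$-coloring of $[1,2\cdot 5^{(r-3)/2}-1]$. Applying Theorem \ref{thm-4} yields a strict Gallai-Schur $r$-coloring of an interval of length $9n+8$, so $\widehat{GS}(r)\ge 9n+9$. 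Substituting $n$ gives $\tfrac{9}{5}5^{r/2}$ for even $r$ and $\tfrac{18}{5}5^{(r-1)/2}$ for odd $r$, which is exactly $f(r)-9$ in each case. The remaining additive $9$ I would recover by a direct extension: append a short constant-length block to the top of the coloring produced by Theorem \ref{thm-4} and verify by hand (using that the separator colors $r-1,r$ occur sparsely and that a strict triple requires $x<y<z$) that no new strict Gallai-Schur triple is created.

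\textbf{Upper bound.} By Lemma \ref{lem2.5} it suffices to produce an increasing $3$-AP-free sequence $a_1<\cdots<a_n$ of non-negative integers with $n\ge GS(r)$ and with $a_n$ as small as possible, since then $\widehat{GS}(r)\le a_n$. I would use the Salem--Spencer set $S_m$ of all integers in $[0,3^m-1]$ whose base-$3$ expansion uses only the digits $0$ and $1$: a relation $a+c=2b$ among such integers produces no carries, hence forces digitwise equality and so $a=c$, which shows $S_m$ is $3$-AP-free, while $|S_m|=2^m$ and $\max S_m=\tfrac{3^m-1}{2}$. Choosing $m=\lceil\log_2 GS(r)\rceil$ guarantees $2^m\ge GS(r)$, so Lemma \ref{lem2.5} gives $\widehat{GS}(r)\le\tfrac{3^m-1}{2}$. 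Finally I would bound $3^m\le 3\cdot GS(r)^{\log_2 3}$ and substitute Theorem \ref{th2-1}; the identity $\left(5^{r/2}\right)^{\log_2 3}=\left(3^{\log_2\sqrt5}\right)^{r}$ converts $GS(r)^{\log_2 3}$ into a constant multiple of $\left(3^{\log_2\sqrt5}\right)^{r}$, and tracking the constants through both parities yields the stated coefficient $\tfrac{43}{4}$.

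\textbf{Main obstacle.} The difficulty is not the overall scheme but the exact constants. On the lower-bound side the clean application of Theorems \ref{thm-3}--\ref{thm-4} lands precisely $9$ short, so the delicate part is verifying that the short top-extension introduces no strict Gallai-Schur triple, which requires a careful case check of how the appended colors interact with the self-similar blocks. On the upper-bound side the care lies entirely in controlling the ceiling $\lceil\log_2 GS(r)\rceil$ together with the $+1$ terms inside $GS(r)$: bounding these uniformly over even and odd $r$, and over the smallest cases, without inflating the exponential base is exactly where the coefficient $\tfrac{43}{4}$ must be pinned down.
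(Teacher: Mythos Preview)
Your scheme matches the paper's closely. The upper bound is done exactly as you describe: the paper applies Lemma~\ref{lem2.5} to the Stanley sequence (non-negative integers whose ternary representation uses only the digits $0$ and $1$), bounds $a_n\le 3^{\log_2 GS(r)+1}$, substitutes Theorem~\ref{th2-1}, and simplifies in each parity to the stated constant $\tfrac{43}{4}$.

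For the lower bound the paper likewise combines Theorems~\ref{thm-3} and~\ref{thm-4}, but it does \emph{not} append any extra block. It writes the single line $\widehat{GS}(r)-1\ge 9\bigl(GS(r-2)-1\bigr)+8$, i.e.\ $\widehat{GS}(r)\ge 9\,GS(r-2)$, and then substitutes Theorem~\ref{th2-1} to obtain $f(r)$ directly. Your proposed remedy for the additive $9$ is the one step that does not go through as written: appending nine integers to the top of the $\chi^{+}$-coloring is not a finite hand-check, since each new value $z\in[9n+9,9n+17]$ participates in $\Theta(n)$ pairs $(x,y)$ with $x+y=z$, and ruling out all resulting strict Gallai-Schur triples would require a structural argument about the full coloring below --- essentially a strengthened version of Theorem~\ref{thm-4} --- rather than a constant-size verification. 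The paper avoids this altogether by taking the input length in Theorem~\ref{thm-4} to be $GS(r-2)-1$ from the outset (rather than the $GS(r-2)-2$ that Theorem~\ref{thm-3} literally supplies), which is why Theorem~\ref{th2-1} is cited alongside Theorems~\ref{thm-3} and~\ref{thm-4} in that one-line derivation.
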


\begin{proof}
By Theorems  {\ref{th2-1},} \ref{thm-3}, and \ref{thm-4}, we obtain $\widehat{GS}(r)-1 \geq 9(GS(r-2)-1)+8$
so that
$$\widehat{GS}(r) \geq 9GS(r-2) = 
\begin{cases}
\frac{9}{5} \cdot 5^{\frac{r}{2}}+9 & \text{for $r$ even;}\\
\frac{18}{5}\cdot 5^{\frac{r-1}{2}}+9& \text{for $r$ odd.}
\end{cases}
$$ Thus, we are left to prove the upper bound for
$\widehat{GS}(r)$.

Let $n=GS(r)$ and let $\{a_i\}^n_{i=1}$ be the following  increasing sequence of positive integers
with no $3$-term arithmetic progression ({which is well-known as a Stanley sequence}):
{$a_1< \cdots <a_n$ are the first $n$ non-negative integers whose ternary representations have only the digits $0$ and $1$}. Clearly, we have
$a_n\leq 3^{\log_2 n+1}$. By Lemma \ref{lem2.5} we have
$$
\widehat{GS}(r)\leq a_n\leq 3^{\log_2 GS(r)+1}= \begin{cases}
3^{\log_2(2\cdot 5^{r/2}+1)+1} & \text{for $r$ even;}\\
3^{\log_2(4\cdot 5^{(r-1)/2}+1)+1} & \text{for $r$ odd.}
\end{cases}
$$
Moreover,
$
3^{\log_2(2\cdot 5^{r/2}+1)+1} \leq 3^{\log_2 5^{(r+1)/2}+1} =3\cdot\left(3^{\log_2 \sqrt{5}}\right)^{r+1}< \frac{43}{4}\cdot\left(3^{\log_2 \sqrt{5}}\right)^{r}
$
and 
$
3^{\log_2(4\cdot 5^{(r-1)/2}+1)+1} \leq 3\cdot 3^{\log_2 5^{(r+1)/2}} < \frac{43}{4}\cdot\left(3^{\log_2 \sqrt{5}}\right)^{r},
$
proving the theorem.
\end{proof}

\begin{remark} Noting that $3^{\log_2 \sqrt{5}} < \sqrt[5]{380}$ and comparing the upper bound in Theorem \ref{thm2.3} with the lower bound for
the growth rate of the Schur numbers found in Remark \ref{rem111}, we find that the growth rate
of $\widehat{GS}(r)$ is strictly smaller than that of $S(r)$.
\end{remark}

\section{The Equation $x+y+b=z$}

In this section, we  give   exact values   for some numbers closely
linked with Gallai-Schur numbers. In particular, we consider
monochromatic and rainbow solutions to $x+y+b=z$ with $b \in \mathbb{Z}^+$.
We start by stating the result obtained by Schaal for
the monochromatic situation with 2 and 3 colors.
The $2$-color case of the following theorem is found in \cite{Schaal1},
while the $3$-color case is in \cite{Schaal2}.  

\begin{theorem}[\cite{Schaal1}, \cite{Schaal2}]\label{th-LR04}
Let $b$ be a positive integer. The minimum integer $m(b)$ such that every $2$-coloring of $[1,m(b)]$
admits a monochromatic solution to $x+y+b=z$ is $m(b)=4b+5$.
The minimum integer $m'(b)$ such that every $3$-coloring of $[1,m'(b)]$
admits a monochromatic solution to $x+y+b=z$ is $m'(b)=13b+4$.
\end{theorem}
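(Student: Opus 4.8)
The plan is to treat each of the two statements as a Rado-number computation, proving matching lower and upper bounds. For a fixed target value $m$, the lower bound $m(b) \ge m$ (respectively $m'(b) \ge m$) is obtained by exhibiting a coloring of $[1,m-1]$ with no monochromatic solution to $x+y+b=z$, and the upper bound by showing that every coloring of $[1,m]$ contains one. Throughout I would exploit the elementary observation that $(n,n,2n+b)$ is always a solution, so any monochromatic-solution-free coloring $\chi$ must satisfy $\chi(n)\neq\chi(2n+b)$ whenever $2n+b$ lies in the interval, together with the more general prohibition that $\chi(x)=\chi(y)=\chi(x+y+b)$ is impossible.

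For the lower bounds I would write down explicit block colorings. In the two-color case the coloring of $[1,4b+4]$ given by $R=[1,b+1]\cup[3b+4,4b+4]$ and $B=[b+2,3b+3]$ suffices: within $B$ the least attainable value of $x+y+b$ is $2(b+2)+b=3b+4$, which already exceeds $\max B$, while within $R$ every value $x+y+b$ either lands in the middle block $[b+2,3b+2]$ (hence is blue) or exceeds $4b+4$, so neither class contains a solution. For the three-color bound on $[1,13b+3]$ I would nest this coloring inside a larger block structure: place the new color on a central interval that is itself shift-sum-free (its left endpoint $p$ chosen so that $2p+b$ exceeds its right endpoint) and put suitably translated copies of the two-color pattern on the flanking intervals, checking admissibility by the same block arithmetic. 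The leading coefficients $4$ and $13$ satisfy $13=3\cdot 4+1$, the signature of the tripling recursion underlying Schur-type bounds, which I would use to guess the exact block sizes.

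For the upper bounds I would chase forced colors. In the two-color case this closes quickly: assuming $\chi$ avoids monochromatic solutions on $[1,4b+5]$ and taking $\chi(1)=R$ without loss of generality, the solution $(1,1,b+2)$ forces $\chi(b+2)=B$; then $(b+2,b+2,3b+4)$ forces $\chi(3b+4)=R$; then $(1,3b+4,4b+5)$ forces $\chi(4b+5)=B$; then $(b+2,2b+3,4b+5)$ forces $\chi(2b+3)=R$; and finally $(1,2b+3,3b+4)$, since $1+(2b+3)+b=3b+4$, is an all-red solution, the desired contradiction.

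The hard part will be the three-color upper bound. With three colors the forcing above is much weaker, since each prohibition only removes one of the two remaining colors, so a single deterministic chain no longer terminates; one must instead branch on the colors of a few anchor elements near the bottom of $[1,13b+4]$ and track several interacting chains up to the top endpoint. Keeping these cases finite and provably exhaustive, rather than the routine block arithmetic of the constructions, is where the real work lies, and I expect the natural bookkeeping to be organized around the same three nested sub-intervals that appear in the tripling construction for the lower bound.
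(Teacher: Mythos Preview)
The paper does not prove this theorem at all: it is quoted as a known result, with the $2$-color case attributed to \cite{Schaal1} and the $3$-color case to \cite{Schaal2}, and no argument is reproduced. So there is no ``paper's proof'' to compare your proposal against.

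That said, your $2$-color treatment is correct and complete: the block coloring $R=[1,b+1]\cup[3b+4,4b+4]$, $B=[b+2,3b+3]$ is valid, and your forcing chain $1\to b+2\to 3b+4\to 4b+5\to 2b+3$ followed by the red triple $(1,2b+3,3b+4)$ is exactly the standard proof of the upper bound. For the $3$-color statement you have only outlined the shape of the argument; the lower-bound coloring is not actually written down (``nest this coloring inside a larger block structure'' is not yet a construction one can verify), and you correctly identify the upper-bound case analysis as the substantial part but do not carry it out. If you intend to supply a self-contained proof rather than cite Schaal, both of those pieces still need to be filled in.
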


Investigating the monochromatic-rainbow paradigm for this new equation,
we will use the following notation.

\vskip 5pt
\noindent
{\bf Notation.} Let $b \in \mathbb{Z}^+$.  Let $n(b)$ be the minimum integer such that
every exact $3$-coloring of $[1,n(b)]$ admits either a monochromatic or rainbow solution to
$x+y+b=z$.

\vskip 5pt
We start with a lower bound for $n(b)$.

\begin{lemma}\label{GRLowerBound}
For $k\in \mathbb{Z}^+$, we have
$
n(2k)\geq 8k+10$
and
$n(2k+1)\geq 8k+9.
$
\end{lemma}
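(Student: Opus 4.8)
For $k\in\mathbb{Z}^+$, we have $n(2k)\ge 8k+10$ and $n(2k+1)\ge 8k+9$.

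Let me sketch how I would prove this.

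The plan is to prove both inequalities by exhibiting, for each $b$, an explicit exact $3$-coloring that avoids every monochromatic and every rainbow solution of $x+y+b=z$ on the interval one shorter than the claimed threshold. Concretely, since $n(b)$ is a threshold, a good coloring of $[1,M-1]$ proves $n(b)\ge M$; so I must produce a good coloring of $[1,8k+9]=[1,4b+9]$ when $b=2k$, and of $[1,8k+8]=[1,4b+4]$ when $b=2k+1$. Each color class will be presented as an explicit union of intervals whose endpoints are affine functions of $b$, with break-points near $b+1$, $2b+2$, $3b+3$ and the top end, together with the small corrections distinguishing the even and odd cases.

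The construction builds on the extremal $2$-coloring behind Theorem \ref{th-LR04}: on $[1,4b+4]$ one may take one class to be $[1,b+1]\cup[3b+4,4b+4]$ and the other $[b+2,3b+3]$, which avoids all monochromatic solutions. The verification tool throughout is the elementary fact that an interval $[s,t]$ contains no solution of $x+y+b=z$ exactly when $t\le 2s+b-1$, together with the cross-term check for a class that is a union of two far-apart intervals (the smallest cross-sum $s_1+s_2+b$ must exceed the top of the class, and the within-bottom sums in $[2s_1+b,\,2t_1+b]$ must miss the upper interval). I will place the third color on a short, carefully chosen block so that: (i) all three colors are used, making the coloring exact; (ii) removing that block leaves each class gap-$b$ sum-free by the safe-interval criterion, while the block itself is too short to host an internal solution; and (iii) no rainbow solution is created. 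Clauses (i) and (ii) are a routine application of the interval test to each class.

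The main obstacle is clause (iii), and it is also where the even/odd split and the precise constants ($+9$ versus $+4$) are forced. To rule out rainbow solutions I will argue by the location of $x\le y<z$ among the defining intervals: since the classes are unions of intervals, a rainbow triple must have its three coordinates in three intervals of three distinct colors, and I will show that for every admissible combination the relation $z=x+y+b$ either pushes $z$ out of $[1,N]$ or forces two of $x,y,z$ into a common class. When $b$ is even, the parity constraint $z-b=x+y$ eliminates one further family of candidate rainbow triples, and this is exactly what lets the even-case witness run five steps longer than the odd-case one. The delicate part is choosing the break-points and the placement of the third color so that the two competing demands---keeping each class gap-$b$ sum-free while destroying every rainbow pattern---hold simultaneously and tightly at the stated lengths. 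I expect the bulk of the work to be the finite case analysis verifying (iii), which I would organize according to the interval containing $z$.
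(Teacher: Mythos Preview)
Your sketch has a genuine gap: you never actually specify the coloring, and the explicit witness \emph{is} the proof of a lower bound of this type. ``A short, carefully chosen block'' is not a construction, and the rainbow verification you defer to ``a finite case analysis'' cannot be assessed without knowing where the block sits. Your parity remark is also misplaced. You say that for even $b$ ``the parity constraint $z-b=x+y$ eliminates one further family of candidate rainbow triples,'' but that identity holds for every $b$, and in a purely interval-based coloring parity carries no information: the color of an integer depends only on which interval contains it. Indeed, the most natural interval refinements of the Schaal $2$-coloring already fail; for odd $b$, taking color~$1$ on $[1,b+1]\cup[3b+4,4b+4]$ and splitting $[b+2,3b+3]$ at $2b+2$ into colors~$2$ and~$3$ produces the rainbow solution $(1,\,b+2,\,2b+3)$.

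The paper's construction is different in kind and shows where parity genuinely enters. One entire color class is taken to be a parity class: for $b=2k$ every odd integer gets color~$1$, and the even integers are split between colors~$2$ and~$3$ (color~$3$ on the evens in $[2k+4,6k+6]$, color~$2$ on the rest); for $b=2k+1$ the roles of even and odd are swapped. Because one color equals a full parity class, the equation $x+y+b=z$ forces that if $x,y$ are both in the parity-class color then $z$ is not, and if exactly one of $x,y$ is, then $z$ is as well---in either case two of $x,y,z$ share that color, so the triple is neither monochromatic nor rainbow. This reduces the entire problem to triples lying in a single parity, where the two remaining colors mimic the extremal $2$-coloring on a half-density set and the check is a short interval argument. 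The difference between $4b+9$ and $4b+4$ comes from which parity carries the split, not from an interval trick, so your plan as stated does not reach the mechanism that actually drives the bound.
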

\begin{proof}
We start by showing that $n(2k)\geq 8k+10$.
For $i\in [8k+9]$, define
\[
\chi(i)=\begin{cases}
1 & \text{ if $i$ is odd;}\\
3 & \text{ if $i$ is even and $i\in [2k+4,6k+6]$;}\\
2 & \text{ otherwise.}
\end{cases}
\]
We shall show that $\chi$ is a 3-coloring of $[1,8k+9]$ with neither a
monochromatic nor rainbow solution to $x+y+2k=z$.  {For a contradiction,} suppose not, and let
$(x,y,z)$ be such a solution  with $x\leq y< z$.
If $x$ and $y$ are both odd, then $z$ is even. Since $\chi(i)=1$ for odd
$i$ and $\chi(i)\neq 1$ for even $i$, it follows that $(x,y,z)$ cannot be rainbow
or monochromatic, a contradiction. For the same reason, it cannot
happen that exactly one of $x$ and $y$ is odd. Thus,  all of $x,y,z$ are
even.

Let $I_1=[2,2k+2], I_2=[2k+4,6k+6]$ and $I_3=[6k+8,8k+8]$. It is
easy to see that even integers in $I_1\cup I_3$ are colored 2 under
$\chi$ and even integers in $I_2$ are colored $3$ under $\chi$. If
$x,y\in I_1$, then $z=x+y+2k\in I_2$. It follows that
$\chi(x)=\chi(y)=2$ and $\chi(z)=3$, a contradiction. If  $x,y\in
I_2$, then $z=x+y+2k\in I_3$. It follows that $\chi(x)=\chi(y)=3$
and $\chi(z)=2$, a contradiction. If $x,y\in I_3$, then
$z=x+y+2k>8k+8$, a contradiction. Thus $x,y$ cannot fall in the same
interval. If $y\in I_3$, then $z=x+y+2k>8k+8$, a contradiction. It
follows that   $x\in I_1$ and $y\in
I_2$. But then $(x,y,z)$ has to be a rainbow solution, which
contradicts the fact that $\chi(i)=2$ or 3 for even $i$. Therefore,
$\chi$ is a 3-coloring of $[1,8k+9]$ avoiding monochromatic and rainbow
solutions to $x+y+2k=z$, thereby showing that $n(2k)\geq
8k+10$.

We next show that $n(2k+1)\geq 8k+9$. For $i\in [1,8k+8]$, define
\[
\chi(i)=\begin{cases}
2 & \text{ if $i$ is even,}\\
3 & \text{ if $i$ is odd and $i\in [2k+3,6k+5]$,}\\
1 & \text{ otherwise.}
\end{cases}
\]
We shall show that $\chi$ is a $3$-coloring of $[1, 8k+8]$ without
monochromatic and rainbow solutions to $x+y+2k+1=z$. Suppose otherwise and let
$(x,y,z)$ be such a solution with $x\leq y< z$.
If $x$ and $y$ are both even, then $z$ is odd. Since $\chi(i)=2$ for even
$i$ and $\chi(i)\neq 2$ for odd $i$, it follows that $(x,y,z)$ cannot be rainbow and
monochromatic, a contradiction. For the same reason, it cannot
happen that one of $x$ and $y$ is even. Thus, we have that all of $x,y,z$ are
odd.

Let $I_1=[1,2k+1], I_2=[2k+3,6k+5]$ and $I_3=[6k+7,8k+7]$. It is
easy to see that odd integers in $I_1\cup I_3$ are colored $1$ under
$\chi$ and odd integers in $I_2$ are colored $3$ under $\chi$. If
$x,y\in I_1$, then $z=x+y+2k+1\in I_2$. It follows that
$\chi(x)=\chi(y)=1$ and $\chi(z)=3$, a contradiction. If  $x,y\in
I_2$, then $z=x+y+2k+1\in I_3$. It follows that $\chi(x)=\chi(y)=3$
and $\chi(z)=1$, a contradiction. If $x,y\in I_3$, then
$z=x+y+2k+1>8k+7$, a contradiction. Thus $x$ and $y$ cannot fall in the same
interval. If $y\in I_3$, then $z=x+y+2k+1>8k+7$, a contradiction. It
follows that   $x\in I_1$ and $y\in
I_2$. But then $(x,y,z)$ must be a rainbow solution, which
contradicts the fact that $\chi(i)=1$ or $3$ for odd $i$. Therefore,
$\chi$ is a $3$-coloring of $[1,8k+8]$ that admits no monochromatic nor
rainbow solution to $x+y+2k+1=z$.  Hence,
$n(2k+1)
\geq 8k+9$.
\end{proof}

\begin{theorem}\label{th3-color}
For $k \in \mathbb{Z}^+$, we have
$
n(2k)=8k+10$
and
$
n(2k+1)=8k+9.
$
Furthermore, $n(1)=11$.
\end{theorem}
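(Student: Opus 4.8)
The lower bounds $n(2k)\ge 8k+10$ and $n(2k+1)\ge 8k+9$ are already supplied by Lemma \ref{GRLowerBound}, so the plan is to prove the matching upper bounds and to treat $n(1)=11$ as a separate case. For the upper bounds I would argue by contradiction: fix an exact $3$-coloring $\chi$ of $[1,N]$, with $N=8k+10$ when $b=2k$ and $N=8k+9$ when $b=2k+1$, and assume it has neither a monochromatic nor a rainbow solution to $x+y+b=z$. The guiding observation is that the claimed thresholds sit right next to Schaal's \emph{two}-color bound $m(b)=4b+5$ from Theorem \ref{th-LR04}, and nowhere near the three-color bound $13b+4$; this strongly suggests that the rainbow-free hypothesis must, on the relevant family of solutions, collapse $\chi$ to a two-coloring, after which Schaal's result finishes the job.

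The structure I would exploit comes from the parity of $b$. Since $x+y+b\equiv z\pmod 2$, the solutions split according to the parities of $x,y,z$, and the extremal colorings of Lemma \ref{GRLowerBound} have exactly one residue class monochromatic (the odd integers get a single color when $b=2k$, the even integers when $b=2k+1$) while the other class carries two colors. The ``dangerous'' solutions are those forced to have $x,y,z$ all of the same parity: dividing by $2$ and absorbing the shift turns such a solution into a solution of the reduced equation $x'+y'+k=z'$ on the scaled interval $[1,4k+5]$. Schaal's two-color bound for the constant $k$ is exactly $m(k)=4k+5$, which is why the even and odd cases land at $2(4k+5)=8k+10=4b+10$ and $2(4k+5)-1=8k+9=4b+5$ respectively; tracking this halving, together with the $+b$ shift and the parity of the largest admissible element, is where the $4b+10$ versus $4b+5$ discrepancy is produced, and is the step needing the most careful bookkeeping.

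The crux, and the step I expect to be the main obstacle, is showing that the rainbow-free and monochromatic-free hypotheses really do force the extremal parity structure, so that on the dangerous family only two colors appear and Schaal applies. Here I would combine the automatic ``diagonal'' constraints $\chi(2x+b)\ne\chi(x)$, coming from the non-rainbow triples $(x,x,2x+b)$, with the mixed-parity triples: whenever one parity class repeats a color across the two like-parity entries of a solution, rainbow-freeness forbids a third color on the remaining entry while monochromatic-freeness forbids repeating it, pinning that entry's color. Propagating these constraints along the arithmetic progressions generated by the equation should force one class to be monochromatic and confine the other to two colors on $[1,4k+5]$; note that a naive induction through the three-color quantity $n(k)$ does \emph{not} close, since $4k+5<n(k)$ fails for odd $k$, so genuinely reducing to the two-color Schaal bound is unavoidable. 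I would organize this finite but somewhat involved case analysis by the colors of the smallest elements and push the implications outward.

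Finally, $b=1$ falls outside the range $k\ge 1$ of the general formula, which would predict $9$ rather than $11$, so I would handle $n(1)=11$ on its own: exhibit one explicit exact $3$-coloring of $[1,10]$ with no monochromatic or rainbow solution to $x+y+1=z$, giving $n(1)\ge 11$, and then verify, by the same diagonal and parity considerations reduced to this short interval, that every exact $3$-coloring of $[1,11]$ contains such a solution.
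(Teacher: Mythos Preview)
Your parity-reduction strategy is conceptually appealing and genuinely different from what the paper does, but the step you yourself flag as ``the main obstacle''---showing that a rainbow-free, monochromatic-free exact $3$-coloring of $[1,N]$ must have one parity class monochromatic and the other two-colored---is not actually carried out, and nothing in the proposal indicates how to close it. You correctly observe that the extremal colorings from Lemma~\ref{GRLowerBound} have this structure, but proving that \emph{every} such coloring does is essentially the whole content of the upper bound; ``propagating constraints along the arithmetic progressions generated by the equation'' is a description of the difficulty rather than a method for resolving it. Without that structural lemma in hand, the reduction to Schaal's two-color bound $m(k)=4k+5$ does not go through, and your argument remains a heuristic.

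The paper takes an entirely different route: a computer-assisted case analysis via the Maple program {\tt GALRAD}. Starting from $1\in A$ and $b+2\in B$, it branches on the colors of $2b+3$, $3b+4$, $4b+5$, and successively further integers, tabulating contradictions (Table~\ref{tabGR1}). Two residual branches do not close automatically; for those, an auxiliary argument shows that iterating the forcing with $k\ge k_0$ yields $[1,4k+5]\subseteq A$ (respectively $[1,4k+8]\subseteq A$), contradicting an earlier deduced membership in $C$. That the authors resort to computation here is some evidence that the clean structural argument you are hoping for may not be easy to extract. Your handling of $n(1)=11$ as a separate small case is correct and matches the paper, which gives the witness coloring $\{1,4,7,10\},\{2,9\},\{3,5,6,8\}$ for the lower bound and checks the upper bound by hand.
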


\begin{proof} It is easy to check by hand that $n(1)=11$ (note that $\{1,4,7,10\}, \{2,9\},\break \{3,5,6,8\}$
gives a $3$-coloring with neither a monochromatic nor rainbow solution to $x+y+1=z$).
We now consider $n(2k)$ and $n(2k+1)$ for $k \in \mathbb{Z}^+$.
By Lemma \ref{GRLowerBound}, it suffices to show that
every (exact) 3-coloring of $[1,8k+10]$ admits either a monochromatic or rainbow solution to $x+y+2k=z$
and that every (exact) 3-coloring of $[1,8k+9]$ admits either a monochromatic or rainbow solution
to $x+y+2k+1=z$.  To do so, we turn to the Maple program {\tt GALRAD}, written by the
second author\footnote{available at {\tt http://math.colgate.edu/$\sim$aaron}}.

{\tt GALRAD} automates the coloring of integers (within either $[1,8k+10]$ or $[1,8k+9]$) by
considering forced colors when avoiding both monochromatic and rainbow solutions.  Letting $A, B,$ and $C$
be the color classes, we may assume that $1 \in A$ and, consequently, we may assume that $b+2 \in B$,  {where $b=2k$ or $2k+1$}.
From this, we must have either $2b+3, 4b+5 \in A$ and $3b+4 \in C$ or $2b+3 \in B$, $3b+4 \in A$, and
$4b+5 \in C$.  This is the starting point of {\tt GALRAD}.  We then may input additional integers
of assumed colors.  We either obtain a contradiction (e.g., $x$ must be both in $A$ and not in $A$) or
we obtain a (typically, larger) list of integers with forced colors and continue with assumptions.

\begin{table}[h!]\centering\footnotesize \hspace*{0pt}
 \begin{subtable}[t]{.4\textwidth}\setlength{\tabcolsep}{.5em}
\begin{tabular}[t]{c|c|c|c}
$A$&$B$&$C$&Contra-\\
&&&diction\\\hline\hline
3&&& No\\[-.5pt]
3&&$k$& Yes\\[-.5pt]
3&$k$&& Yes\\[-.5pt]
$3,k$&&& No\\[-.5pt]
$3,k$&2&& Yes\\[-.5pt]
$3,k$&&2& Yes\\[-.5pt]
$2,3,k$&&& No\\[-.5pt]
$2,3,k,2k+1$&&& Yes\\[-.5pt]
$2,3,k$&$2k+1$&& Yes\\[-.5pt]
$2,3,k$&&$2k+1$& Yes\\[-.5pt] \hline
&3&& No\\[-.5pt]
2&3&& Yes\\[-.5pt]
&$2,3$&& Yes\\[-.5pt]
&3&2& No\\[-.5pt]
$2k-1$&3&2& Yes\\[-.5pt]
&$3,2k-1$&2& Yes\\[-.5pt]
&3&$2,2k-1$& Yes\\[-.5pt] \hline
&&3& No\\[-.5pt]
2&&3& Yes\\[-.5pt]
&&2,3& Yes\\[-.5pt]
&2&3& No\\[-.5pt]
$2k+1$&2&3& Yes\\[-.5pt]
&$2,2k+1$&3& Yes\\[-.5pt]
&2&$3,2k+1$& Yes\\[-5pt]
\end{tabular}
\vspace*{5pt}
\subcaption{\footnotesize Forced colors with $b=2k+1$  and $1,2b+3, 4b+5 \in A$ and $3b+4 \in C$}
\end{subtable} \hspace*{30pt}
\begin{subtable}[t]{.4\textwidth}\footnotesize\setlength{\tabcolsep}{.5em}
\begin{tabular}[t]{c|c|c|c}
$A$&$B$&$C$&Contra-\\
&&&diction\\\hline\hline
 $2k+1$&&& No\\[-.5pt]
  $2,2k+1$& && Yes\\[-.5pt]
 $2k+1$&2&& Yes\\[-.5pt]
 $2k+1$& &2& Yes\\[-.5pt] \hline
 &$2k+1$&& No\\[-.5pt]
 &$2,2k+1$&& Yes\\[-.5pt]
 &$2k+1$&2& Yes\\[-.5pt]
$2$&$2k+1$&& No\\[-.5pt]
$2,k$&$2k+1$&& Yes\\[-.5pt]
$2$&$k,2k+1$&& Yes\\[-.5pt]
$2$&$2k+1$&$k$& Yes\\[-.5pt] \hline
&&$2k+1$& No\\[-.5pt]
&2&$2k+1$& Yes\\[-.5pt]
&&$2,2k+1$& Yes\\[-.5pt]
2&&$2k+1$& No\\[-.5pt]
$2,k$& &$2k+1$& Yes\\[-.5pt]
2&$k$&$2k+1$& Yes\\[-.5pt]
2& &$2,2k+1$& Yes\\[-5pt]
\end{tabular}
\vspace*{5pt}
\subcaption{\footnotesize Forced colors with $b=2k$  and $1,2b+3, 4b+5 \in A$ and $3b+4 \in C$}
\end{subtable}
\vskip10pt
\hspace*{0pt}
 \begin{subtable}[t]{.4\textwidth}\setlength{\tabcolsep}{.35em}
\begin{tabular}[t]{c|c|c|c}
$A$&$B$&$C$&Contra-\\
&&&diction\\\hline\hline
$2k+1$&&& No\\[-.5pt]
$3,2k+1$&&$k$& Yes\\[-.5pt]
$2k+1$&3&& Yes\\[-.5pt]
$2k+1$&&3& Yes\\[-.5pt] \hline
&&$2k+1$& No\\[-.5pt]
$3$&&$2k+1$& Yes\\[-.5pt]
 &3&$2k+1$& Yes\\[-.5pt]
 &&$3,2k+1$& Yes\\[-.5pt] \hline
 &$2k+1$&& No\\[-.5pt]
 &$3,2k+1$&& Yes\\[-.5pt]
 &$2k+1$&3& Yes\\[-.5pt]
$3$&$2k+1$&& No\\[-.5pt]
$3,k-1$&$2k+1$&& Yes\\[-.5pt]
$3$&$k-1,2k+1$&& Yes\\[-.5pt]
$3$&$2k+1$&$k-1$& No\\[-.5pt]
$3,k+2$&$2k+1$&$k-1$& Yes\\[-.5pt]
$3$&$k+2,2k+1$&$k-1$& Yes\\[-.5pt]
3&$2k+1$&$k-1,k+2$& No\\[-5pt]
\end{tabular}
\vspace*{5pt}
\subcaption{\footnotesize Forced colors with $b=2k+1$  and $1, 3b+4 \in A$, $2b+3\in B$, and $4b+5 \in C$}
\end{subtable} \hspace*{30pt}
\begin{subtable}[t]{.4\textwidth}\footnotesize\setlength{\tabcolsep}{.25em}
\begin{tabular}[t]{c|c|c|c}
$A$&$B$&$C$&Contra-\\
&&&diction\\\hline\hline
 $2k+1$&&& No\\[-.5pt]
  $2,2k+1$& && Yes\\[-.5pt]
 $2k+1$&2&& Yes\\[-.5pt]
 $2k+1$& &2& Yes\\[-.5pt] \hline
&&$2k+1$& No\\[-.5pt]
2&&$2k+1$& Yes\\[-.5pt]
&2&$2k+1$& Yes\\[-.5pt]
&&$2,2k+1$& Yes\\[-.5pt] \hline
&{$2k+1$}&& {No}\\[-.5pt]
$k+1$& $2k+1$&& Yes\\[-.5pt]
& $k+1,2k+1$&& Yes\\[-.5pt]
 &$2k+1$&$k+1$& No\\[-.5pt]
 &$3,2k+1$&$k+1$& Yes\\[-.5pt]
 &$2k+1$&$3,k+1$& Yes\\[-.5pt]
$3$&$2k+1$&$k+1$& No\\[-5pt]
\end{tabular}
\vspace*{5pt}
\subcaption{\footnotesize Forced colors with $b=2k$  and $1, 3b+4 \in A$, $2b+3\in B$, and $4b+5 \in C$ }
\end{subtable}
\vspace*{5pt}
\caption{\small Forced colors using {\tt GALRAD}}\label{tabGR1}
\end{table}

In Table \ref{tabGR1}, underneath the color classes ($A, B,$ and $C$) we place integers that
we assume are in that color class.  The last column informs us of whether or not we obtain a
contradiction.  Parts (a) and (b) each exhibit a complete list of possibilities, thereby proving the upper bounds.
 We see that both (c) and (d) of Table 1 end without a final contradiction.  To finish both of these cases some
 additional work is needed.  For the situation in (c), we use the flexibility offered in {\tt GALRAD} to further
 assume that $k \geq k_0$, where $k_0$ is a given integer. This allows us to consider $k-1, k-2, \dots, k-k_0+1$ as
 positive integers in the computer algorithm.  By iteratively increasing $k_0$ by 1
it becomes clear
 (but is tedious to show by hand) that when we are in the final situation of part (c) we have
 $[1,4k_0+5] \subseteq A$.  Taking $k=k_0$, we obtain $[1,4k+5] \subseteq A$, which contradicts
 the deduction that $k-1 \in C$.  For situation (d), we argue in
 the same way noting that $[1,4k+8] \subseteq A$ while $k+1 \in C$.
\end{proof}

\section{On the Number of Gallai-Schur Triples}

In 1995, Graham, R\"{o}dl, and Ruci\'{n}ski \cite{GRR96} proposed
the following multiplicity problem: Find (asymptotically) the least number of monochromatic solutions to $x+y=z$ that must occur in a $2$-coloring of the set $[1,n]$. This problem was solved by Robertson and Zeilberger \cite{RZ98}, and independently by Schoen \cite{Schoen99}, with a nice proof given later by Datskovsky \cite{Datskovsky03}. The answer was found to be $\frac{n^2}{22}(1+o(1))$.

Here we investigate this problem for Gallai-Schur triples.
We start with the following lemma that
uses notation from Definition \ref{defn1}.

\begin{lemma}\label{lem4.1new} Let $k,\ell,r \in \mathbb{Z}^+$ and let $n\geq G(k,\ell;r)$.
Define $s=s(k,\ell;r)$ to be the minimum number of rainbow $K_k$ and monochromatic $K_\ell$ copies over
all $r$-colorings of the edges of the complete graph on ${G(k,\ell;r)}$ vertices.  Then any
$r$-coloring of the edges of $K_n$ contains at least
$$
s \frac{{n \choose m}}{ {G(k,\ell;r) \choose m}}
$$
rainbow $K_k$ copies and monochromatic $K_\ell$ copies, where $m=\min(k,\ell)$.
\end{lemma}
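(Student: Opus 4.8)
The plan is to run a double-counting (averaging) argument over all $N$-vertex subsets, where throughout I abbreviate $N = G(k,\ell;r)$. Fix an arbitrary $r$-coloring of $E(K_n)$ and let $\mathcal{H}$ denote the family of all target substructures it contains, i.e.\ all rainbow copies of $K_k$ together with all monochromatic copies of $K_\ell$, each recorded by its vertex set; write $T = |\mathcal{H}|$ for the quantity to be bounded below. The key observation on the lower-bound side is that the restriction of the coloring to any $N$-subset $S$ of vertices is itself an $r$-coloring of $K_N$, so by the definition of $s = s(k,\ell;r)$ the set $S$ spans at least $s$ members of $\mathcal{H}$; here one uses that a rainbow $K_k$ or monochromatic $K_\ell$ lying inside $S$ is still rainbow, respectively monochromatic, in $K_n$ because colors are inherited.

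First I would count in two ways the incident pairs $(H,S)$ with $H \in \mathcal{H}$, $S \in \binom{V(K_n)}{N}$, and $V(H) \subseteq S$. Summing over $S$ and applying the observation above yields at least $s\binom{n}{N}$ such pairs. Summing instead over $H$: if $H$ occupies $h_H$ vertices (so $h_H = k$ when $H$ is a rainbow $K_k$ and $h_H = \ell$ when $H$ is a monochromatic $K_\ell$), then exactly $\binom{n-h_H}{N-h_H}$ of the $N$-subsets contain $V(H)$; note $h_H \le \max(k,\ell) \le N \le n$, so these binomials are well-defined. Equating the two counts gives $\sum_{H \in \mathcal{H}} \binom{n-h_H}{N-h_H} \ge s\binom{n}{N}$.

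To convert this into the stated bound I would invoke the identity $\binom{n-h}{N-h} = \binom{n}{N}\cdot \rho(h)$, where $\rho(h) = \frac{N(N-1)\cdots(N-h+1)}{n(n-1)\cdots(n-h+1)}$, valid for $0 \le h \le N \le n$, so the inequality becomes $\sum_{H}\rho(h_H) \ge s$. The main obstacle, and really the only delicate point, is the mismatch between the two substructure sizes $k$ and $\ell$: the weights $\rho(h_H)$ take two different values depending on whether $H$ is a rainbow $K_k$ or a monochromatic $K_\ell$, and the target bound is phrased with the single exponent $m = \min(k,\ell)$. I would resolve this by a monotonicity argument: since $\rho(h+1)/\rho(h) = (N-h)/(n-h) \le 1$ because $n \ge N$, the ratio $\rho$ is non-increasing in $h$, and as $h_H \ge m$ for every $H$ we obtain $\rho(h_H) \le \rho(m)$ uniformly.

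Replacing every weight by $\rho(m)$ then gives $T\,\rho(m) = \sum_{H}\rho(m) \ge \sum_{H}\rho(h_H) \ge s$. Finally, observing that $\rho(m) = \binom{N}{m}/\binom{n}{m}$, this rearranges to $T \ge s\,\binom{n}{m}/\binom{N}{m}$, which is exactly the asserted lower bound; as the coloring of $K_n$ was arbitrary, the lemma follows. (As a sanity check, when $n = N$ the bound reduces to $T \ge s$, consistent with the definition of $s$.)
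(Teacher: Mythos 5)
Your proof is correct and is essentially the same argument as the paper's: both double-count pairs consisting of a good substructure (rainbow $K_k$ or monochromatic $K_\ell$) and a $G(k,\ell;r)$-subset containing it, getting $s\binom{n}{G(k,\ell;r)}$ as a lower bound and then bounding each substructure's containment count by the one for the smaller size $m=\min(k,\ell)$. Your normalized weight $\rho(h)=\binom{n-h}{N-h}/\binom{n}{N}$ and its monotonicity is just a repackaging of the paper's step that $\max\left(\binom{n-k}{g-k},\binom{n-\ell}{g-\ell}\right)=\binom{n-m}{g-m}$, followed by the same binomial identity $\binom{n}{g}/\binom{n-m}{g-m}=\binom{n}{m}/\binom{g}{m}$.
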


\begin{proof}
  Denote the vertex set of a graph $H$ by $V(H)$, and
let $K(W)$ be the complete graph on vertex set $W$.
Let $g=G(k,\ell;r)$, and let $\chi$ be an
$r$-coloring of the edges of $K_n$.

Call   $T \subseteq V(K_n)$   {\it good} if either
$|T|=k$ and $K(T)$ is rainbow or $ |T|=\ell$ and $K(T)$
is monochromatic.
Define
\[
\Omega = \left\{(S,T)\colon T\subseteq S\subseteq V(K_n),\  |S|=g,\ T \mbox{ is good}\right\}.
\]

We shall derive a lower bound for the number of rainbow and
monochromatic triangles under $\chi$ by a double counting technique.
On one hand, for any $S\subseteq V(K_n)$ with $|S|=g$,  {it is clear that} $K(S)$ contains at least $s$ rainbow $K_k$ and  monochromatic $K_\ell$ copies
under $\chi$. It follows that  {$|\Omega|\geq s\binom{n}{g}$}. On the
other hand, every rainbow $K_k$  is contained in
$\binom{n-k}{g-k}$ different subgraphs $K_g$ of $K_n$, and every monochromatic $K_\ell$
is contained in $\binom{n-\ell}{g-\ell}$ different subgraphs $K_g$ of $K_n$. Let $t_\chi$ be the total
number of rainbow $K_k$ and monochromatic $K_\ell$ copies under $\chi$. Then we have
\[
s {n \choose g} \leq |\Omega| \leq t_\chi \cdot \max\left(\binom{n-k}{g-k},\binom{n-\ell}{g-\ell}\right)
\]
and hence
\[
t_\chi \geq s \frac{\binom{n}{g}}{\binom{n-m}{g-m} }= s \frac{{n \choose m}}{ {G(k,\ell;r) \choose m}},
\]
where $m=\min(k,\ell)$.
\end{proof}

We will use the $k=\ell=3$ instance of Lemma \ref{lem4.1new} as stated in the following corollary.

\begin{corollary}\label{th-4-1}
For $r\geq 2$ and $n\geq  G(3,3;r)$, every
$r$-coloring of the edges of $K_n$ contains at least
$\left(\frac{n}{G(3,3;r)}\right)^3$ triangles
that are rainbow or monochromatic.
\end{corollary}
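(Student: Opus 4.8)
The plan is to specialize Lemma \ref{lem4.1new} to the case $k=\ell=3$ and then simplify the resulting binomial bound into the stated cube. Setting $k=\ell=3$ gives $m=\min(k,\ell)=3$, and writing $g=G(3,3;r)$, Lemma \ref{lem4.1new} tells us directly that any $r$-coloring of the edges of $K_n$ contains at least
$$
s\,\frac{\binom{n}{3}}{\binom{g}{3}}
$$
triangles that are rainbow or monochromatic, where $s=s(3,3;r)$ denotes the minimum number of rainbow or monochromatic triangles over all $r$-colorings of the edges of $K_g$.

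The first step is to observe that $s\geq 1$. By Definition \ref{defn1}, the integer $g=G(3,3;r)$ is precisely the threshold at which every $r$-coloring of $K_g$ is forced to contain a rainbow or monochromatic triangle, so no coloring of $K_g$ can avoid such a triangle entirely. Hence the lower bound furnished by Lemma \ref{lem4.1new} is at least $\binom{n}{3}/\binom{g}{3}$.

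The remaining step is purely arithmetic: I would verify that
$$
\frac{\binom{n}{3}}{\binom{g}{3}}=\frac{n(n-1)(n-2)}{g(g-1)(g-2)}\geq\left(\frac{n}{g}\right)^{3}
$$
whenever $n\geq g$. Clearing denominators, this reduces to showing $f(n)\geq f(g)$ for the auxiliary function $f(x)=\frac{(x-1)(x-2)}{x^{2}}=1-\frac{3}{x}+\frac{2}{x^{2}}$. Since $f'(x)=\frac{3x-4}{x^{4}}>0$ for all $x\geq 2$, the function $f$ is increasing on the relevant range (indeed $g=G(3,3;r)\geq 5$), and as $n\geq g$ we obtain $f(n)\geq f(g)$, which is exactly the inequality needed. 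Chaining the three observations gives at least $(n/g)^{3}$ rainbow or monochromatic triangles, as claimed.

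I expect no genuine obstacle here, since the statement is essentially a direct corollary: the only non-mechanical points are recognizing that $s\geq 1$ follows from the defining property of $G(3,3;r)$, and confirming the monotonicity of $f$ so that the binomial ratio dominates the cube. The latter is the one place a careless argument could slip, as one must be sure the inequality direction is compatible with $n\geq g$ rather than with the reverse ordering.
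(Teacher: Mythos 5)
Your proposal is correct and follows essentially the same route as the paper: specialize Lemma \ref{lem4.1new} to $k=\ell=3$, invoke the trivial bound $s\geq 1$ (justified exactly as you do, by the defining property of $G(3,3;r)$), and then verify $\binom{n}{3}/\binom{g}{3}\geq (n/g)^3$ via the monotonicity of $\left(1-\frac{1}{x}\right)\left(1-\frac{2}{x}\right)$ in $x$, which is the paper's closing inequality. One trivial slip: $f'(x)=\frac{3x-4}{x^{3}}$, not $\frac{3x-4}{x^{4}}$, though this does not affect the sign argument or the conclusion.
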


\begin{proof} Let $g = G(3,3;r)$. From Lemma \ref{lem4.1new}, using the trivial bound $s \geq 1$, all that remains to prove is that
$
\frac{{n \choose 3}}{{g \choose 3}} \geq \frac{n^3}{g^3},
$
which holds since $\left(1 -\frac{1}{n}\right)\left(1 - \frac{2}{n}\right) \geq \left(1 -\frac{1}{g}\right)\left(1 - \frac{2}{g}\right)$
holds for $n \geq g$.
\end{proof}

\begin{theorem}\label{thm4.3} Let $r \geq 3$ and define $M(n;r)$ to be the minimum number
of Gallai-Schur triples
over all $r$-colorings of $[1,n]$.
We have
$$
\frac{n^2}{2m^3}(1+o(1)) \leq M(n;r) \leq\frac{4n^2}{121}(1+o(1)) ,
$$
for $n \geq m$, where
$$
m=\begin{cases}
5^{\frac{r}{2}}+1 & \text{ if $r$ is even;}\\
2\cdot 5^{\frac{r-1}{2}}+1 & \text{ otherwise.}
\end{cases}
$$
Moreover, for $r=3$ we have
 {$$ \frac{n^2}{276}(1+o(1)) < M(n;3) \leq \frac{4n^2}{121}(1+o(1)) .$$}

\end{theorem}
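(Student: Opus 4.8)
The plan is to prove the two general bounds by passing between the integer setting and the edge-colouring setting, exactly as in Lemma \ref{lem2.5}, and then to treat the single case $r=3$ separately by a finer count.

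For the general lower bound I would reuse the construction from Lemma \ref{lem2.5}. Given an $r$-colouring $\chi$ of $[1,n]$, take the complete graph on $\{0,1,\dots,n\}$ and colour the edge $\{i,j\}$ by $\chi(|i-j|)$; this is an $r$-colouring of $K_{n+1}$. A triangle $\{i,j,k\}$ with $i<j<k$ has edge colours $\chi(j-i),\chi(k-j),\chi(k-i)$, and since $(k-i)=(j-i)+(k-j)$ a rainbow or monochromatic triangle is precisely a Gallai-Schur triple $(x,y,z)$ with $\{x,y\}=\{j-i,k-j\}$ and $z=k-i$. By Corollary \ref{th-4-1} (using $n+1\ge G(3,3;r)=m$) there are at least $\left(\tfrac{n+1}{m}\right)^3$ such triangles. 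Conversely a fixed Gallai-Schur triple $(x,y,z)$ is produced only by the triangles $\{i,i+x,i+z\}$ and $\{i,i+y,i+z\}$ with $0\le i\le n-z$, hence by at most $2(n+1-z)\le 2n$ triangles. Dividing, every $r$-colouring contains at least $\tfrac{1}{2n}\left(\tfrac{n+1}{m}\right)^3=\tfrac{n^2}{2m^3}(1+o(1))$ Gallai-Schur triples, which is the claimed bound.

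For the upper bound I would exhibit one explicit $3$-colouring (a legitimate $r$-colouring for every $r\ge3$; if exact surjectivity is required one pads the extra colours onto a negligible set) and count. Rescaling $[1,n]\to[0,1]$ via $s=x/n,\ t=y/n$, the number of monochromatic triples of colour $c$ is asymptotic to $n^2$ times the area of $\{(s,t):0\le s\le t,\ s+t\le1,\ \chi(ns)=\chi(nt)=\chi(n(s+t))=c\}$, and likewise for rainbow triples, so the total is $n^2$ times the area of the region where $(s,t,s+t)$ is monochromatic or rainbow. Taking each colour class to be a union of intervals and optimising the breakpoints (which come out rational with denominator $11$, mirroring the $\tfrac1{22}$ of Robertson--Zeilberger \cite{RZ98} for two colours), the resulting polygonal areas sum to $\tfrac{4}{121}$, giving $M(n;r)\le\tfrac{4n^2}{121}(1+o(1))$. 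The only work is the routine but lengthy evaluation and minimisation of these areas.

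The hard part is the improved constant for $r=3$. The obstruction is that the general argument is genuinely lossy here: over \emph{all} $3$-edge-colourings of $K_N$ the minimum number of rainbow-or-monochromatic triangles is only $\Theta(N^3)$ with a small constant — an iterated blow-up of the extremal $K_{10}$-colouring already attains density about $\tfrac1{25}$ of $\binom N3$, and in fact the minimum of $s(3,3;3)$ in Lemma \ref{lem4.1new} is $1$ (adjoin to the extremal $K_{10}$ a twin of one vertex) — so plugging $s(3,3;3)$ into Lemma \ref{lem4.1new} can never beat $\tfrac{n^2}{2m^3}$. The extra leverage must come from the fact that the colouring of $K_{n+1}$ above is not arbitrary but a \emph{distance} colouring: every edge of a given length gets the same colour. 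I would therefore argue directly on $[1,n]$, writing $S$ and $R$ for the numbers of monochromatic and rainbow Schur triples and $t_{aab}$ for the number with colour multiset $\{a,a,b\}$. Applying the Robertson--Zeilberger bound to each of the three two-colourings obtained by merging a pair of colours yields $3S+\sum t_{aab}\ge\tfrac{3n^2}{22}(1+o(1))$, which together with the total count $S+\sum t_{aab}+R=\tfrac{n^2}{4}(1+o(1))$ is by itself vacuous (it only bounds $2S-R$ below by a negative quantity). The missing, and decisive, ingredient is a supersaturation-type inequality, valid for distance colourings, forcing many rainbow triples once the mixed counts $t_{aab}$ are large; balanced against the merge inequalities this is what should produce the strict bound $M(n;3)>\tfrac{n^2}{276}(1+o(1))$. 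I expect this last relation — quantifying how few rainbow triples a distance $3$-colouring can have in the presence of many mixed triples — to be the crux, and precisely the place where the restriction to distance colourings, rather than arbitrary $3$-colourings of $K_N$, is indispensable.
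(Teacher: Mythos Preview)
Your treatment of the general lower bound is exactly the paper's argument, and your upper bound idea is in the right spirit, though the paper does not optimise over interval colourings: it simply writes down one explicit $3$-colouring (the Robertson--Zeilberger $2$-colouring on $[1,n/2]$ together with a third colour on $(n/2,n]$) and computes $\tfrac{n^2}{88}$ monochromatic plus $\tfrac{21n^2}{968}$ rainbow triples, summing to $\tfrac{4n^2}{121}$. Your ``optimise breakpoints with denominator $11$'' sketch would presumably land on the same coloring, but as written it is a promise rather than a proof.

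The genuine gap is the $r=3$ lower bound. You correctly observe that Corollary~\ref{th-4-1} with $m=11$ is far too weak, but your diagnosis --- that one must exploit the special \emph{distance} structure of the edge-colouring, via some unstated supersaturation inequality relating rainbow and mixed-colour Schur triples --- is not what the paper does, and your own merge-two-colours calculation, as you note, is vacuous. The actual input is an external result you do not mention: Cummings--Kr\'al--Pfender--Sperfeld--Treglown--Young proved that \emph{every} $3$-edge-colouring of $K_N$ (distance or not) contains at least $\tfrac{N^3}{150}(1+o(1))$ \emph{monochromatic} triangles. This replaces the $\tfrac{N^3}{m^3}$ from Corollary~\ref{th-4-1} by something roughly nine times larger, and no rainbow count is needed at all. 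Coupled with a mildly sharper version of your triangle-to-triple count (rather than bounding each triple by $2n$ triangles, one sums $\sum_{z\le k}\lfloor z/2\rfloor\cdot 2(n+1-z)$ and solves $\tfrac{k^2n}{2}-\tfrac{k^3}{3}=\tfrac{n^3}{150}$ for $k$), this yields $\tfrac{k^2}{4}>\tfrac{n^2}{276}$. So the ``crux'' you anticipated --- a distance-colouring-specific rainbow inequality --- is unnecessary; the missing piece is the monochromatic-triangle density theorem for arbitrary $3$-edge-colourings.
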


\begin{proof}
The upper bound on $M(n;r)$ comes from the following $3$-coloring, and is based on
the coloring found in \cite{RZ98} that produces the minimum number of monochomatic Schur triples over all 2-colorings:
{color every $i \in \left[1,\frac{4n}{22}\right] \cup \left(\frac{10n}{22}, \frac{n}{2}\right]$ red; color every $i \in \left(\frac{4n}{22}, \frac{10n}{22}\right]$
blue; and
color every $i \in \left(\frac{n}{2},n\right]$ green.}
 {From \cite{RZ98},   we know that there are $\frac{n^2}{88}(1+o(1))$ monochromatic Schur triples that occur in  $\left[1,\frac{n}{2}\right]$ and no monochromatic Schur triples that occur in the   $\left(\frac{n}{2},n\right]$.
It is easy to determine that there are $\frac{21n^2}{968}(1+o(1))$ rainbow triples in the set $[1,n]$; see Figure \ref{Fig01}. Hence, in
total there are $\frac{4n^2}{121}(1+o(1))$ Gallai-Schur triples in our 3-coloring.}

\begin{figure}[!h]
\begin{center}
 \hspace*{-20pt}
\setlength{\unitlength}{.35mm}
\begin{picture}(100,130)(10,-10)
\linethickness{.3mm}
\put(0,0){\line(0,1){120}}
\put(0,0){\line(1,0){120}}
\put(120,-10){\line(-1,1){120}}
\put(65,-10){\line(-1,1){65}}
\linethickness{.2mm}

\dottedline{1}(0,20)(110,20)
\dottedline{1}(55,0)(55,110)
\dottedline{1}(0,50)(110,50)
\dottedline{1}(50,0)(50,110)
\dottedline{1}(20,0)(20,110)
\put(108,-7){\footnotesize $n$}
\put(-5,-3){\footnotesize $0$}
\put(-2,125){\footnotesize $y$}
\put(125,-2){\footnotesize $x$}
\put(-7,108){\footnotesize $n$}
\put(-14,18){\footnotesize $\frac{4n}{22}$}
\put(41,-10){\footnotesize $\frac{10n}{22}$}
\put(54,-10){\footnotesize $\frac{n}{2}$}
\put(15,-10){\footnotesize $\frac{4n}{22}$}
\put(-16,48){\footnotesize $\frac{10n}{22}$}
\put(62,-20){\footnotesize $x+y=\frac{n}{2}$}
\put(115,-20){\footnotesize $x+y=n$}

\multiput(7,48)(2,0){6}{,}
\multiput(11,45)(2,0){4}{,}
\multiput(13,42)(2,0){3}{,}
\multiput(17,39)(2,0){1}{,}
\multiput(50,48)(2,0){2}{,}
\multiput(50,45)(2,0){2}{,}
\multiput(50,42)(2,0){2}{,}
\multiput(50,39)(2,0){2}{,}
\multiput(50,36)(2,0){2}{,}
\multiput(50,33)(2,0){2}{,}
\multiput(50,30)(2,0){2}{,}
\multiput(50,27)(2,0){2}{,}
\multiput(50,24)(2,0){2}{,}
\end{picture}
\end{center}
\vskip8pt
\caption{Rainbow solutions (shaded) to $x+y=z$}\label{Fig01}
\end{figure}
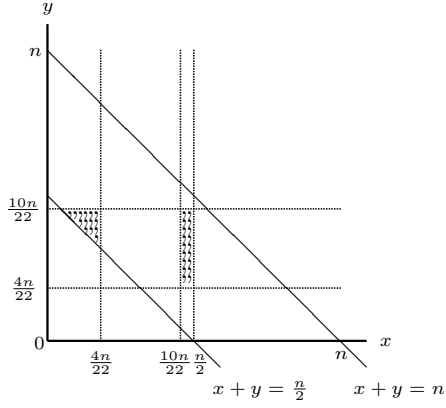

If we require the $r$-coloring to be exact, from this 3-coloring
replace $r-3$ of the integers' colors with $r-3$ distinct colors.  We now have an exact $r$-coloring
of $[1,n]$ with {$\frac{4n^2}{121}- O(rn) = \frac{4n^2}{121}(1+o_r(1))$} Gallai-Schur triples.
(Of course, this is unsatisfying and we address this issue later in the article.)

For the lower bound, we turn to
Theorem \ref{th2-1A}, where we see that $m = G(3,3;r)$.
 {Let $V(K_{n+1})=[1,n+1]$ and let $\chi$ be an $r$-coloring of $[1,n]$. We
define an $r$-coloring $\chi'$ of the edges of $K_{n+1}$ by}
$\chi'(ij)=\chi(|i-j|)$ for any $i,j\in [1,n+1]$. Since $n+1\geq
m$, by Corollary \ref{th-4-1}, we see
that the number of monochromatic and rainbow triangles is at least
$ \frac{n^3}{m^3} (1+o(1))$. Moreover,
every Gallai-Schur triple $(x,y,z)$ under $\chi$
creates at most $2(n+1-z)$ rainbow or monochromatic triangles in $K_{n+1}$ under
$\chi'$.  Using $2(n+1-z) \leq 2n$ gives the bound $\frac{n^2}{2m^3}(1+o(1))$.

However, for specific values of $r$, we can improve this bound slightly.
Let $r$ be given. Noting that there are $\left\lfloor\frac{i}{2}\right\rfloor$ solutions to
$x+y=i$, then at most $\left\lfloor\frac{i}{2}\right\rfloor$ rainbow and monochromatic
solutions to $x+y=i$ exist, each corresponding to at most $2(n+1-i)$ rainbow and
monochromatic triangles.  Letting $k=cn$, where $0<c<1$, we deduce that there
are at least
$$
\sum_{i=2}^{k} \left\lfloor \frac{i}{2}\right\rfloor \cdot 2 (n+1-i) \leq 4 \sum_{i=1}^{k/2} i(n-2i) + O(n)=\left(\frac{k^2}{2}n - \frac{k^3}{3}\right)(1+o(1))
$$
triangles corresponding to
$\frac{k^2}{4}(1+o(1))$ monochromatic and rainbow solutions to $x+y=z$.
{Solving
\begin{equation*}\label{eq1}
\left(\frac{k^2}{2}n - \frac{k^3}{3}\right)(1+o(1)) = \frac{n^3}{m^3}(1+o(1))
\end{equation*}
for $k$ (given $m$) will give a bound of $\frac{k^2}{4}(1+o(1))$ that is slightly better than $\frac{n^2}{2m^3} (1+o(1))$.}

For $r=3$, we can drastically improve the lower bound  (by about a factor of 10) by appealing to a result in \cite{CKPSTY}, where we find that
any $3$-coloring of the edges of $K_n$ admits at least $\frac{n^3}{150}(1+o(1))$ monochromatic triangles.
Hence, we may instead solve
$$
\left(\frac{k^2}{2}n - \frac{k^3}{3}\right)(1+o(1)) = \frac{n^3}{150}(1+o(1))
$$ for $k$ to
obtain (using Maple) that $k \approx 0.1204034549n$ so that we have at least
$$
\frac{k^2}{4}(1+o(1))\approx 0.003624247988n^2(1+o(1)) > \frac{n^2}{276}(1+o(1))
$$
Gallai-Schur triples in any $3$-coloring of $[1,n]$ (actually, we have at least this
many monochromatic Schur triples).
\end{proof}

As mentioned in the above proof, we obtained a lower bound for the minimum number of monochromatic Schur triples
over all $3$-colorings of $[1,n]$. An upper bound
found in \cite{Thot} allows us to state the following.

\begin{corollary}
Let $T(n)$ be the minimum number of monochromatic Schur triples over all $3$-colorings of $[1,n]$.
Then
$$
\frac{1}{276}n^2(1+o(1)) < T(n) < \frac{2.08}{276}n^2(1+o(1)).
$$
\end{corollary}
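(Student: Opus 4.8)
The plan is to extract the lower bound directly from the machinery already developed for Theorem \ref{thm4.3}, and to obtain the upper bound from the construction of \cite{Thot}. The central observation is that the $r=3$ portion of the proof of Theorem \ref{thm4.3} never actually uses a rainbow triangle. First I would recall the transfer $\chi\mapsto\chi'$ with $\chi'(ij)=\chi(|i-j|)$, which turns an ordered triple $i<j<k$ in $[1,n+1]$ into a Schur triple with differences $j-i,\,k-j,\,k-i$; under this map a \emph{monochromatic} triangle of $K_{n+1}$ corresponds to a \emph{monochromatic} Schur triple of $[1,n]$, and a single Schur triple $x+y=z$ accounts for at most $2(n+1-z)$ such triangles. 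Crucially, the guaranteed count of triangles here comes not from Corollary \ref{th-4-1} but from the stronger bound of \cite{CKPSTY}, namely at least $\tfrac{n^3}{150}(1+o(1))$ monochromatic triangles in any $3$-coloring of $E(K_n)$. Since every one of these triangles is monochromatic, the whole chain of inequalities in Theorem \ref{thm4.3} bounds the number of monochromatic Schur triples, not merely Gallai--Schur triples.

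Next I would reproduce the same optimization: a family of $M$ monochromatic Schur triples can cover at most $\bigl(\tfrac{k^2}{2}n-\tfrac{k^3}{3}\bigr)(1+o(1))$ triangles, where $k$ is chosen so that $M=\tfrac{k^2}{4}(1+o(1))$, since the triples covering the most triangles are those of smallest $z$, enumerated by $\sum_{i\le k}\lfloor i/2\rfloor\cdot 2(n+1-i)$. Equating this maximal coverage to the guaranteed $\tfrac{n^3}{150}(1+o(1))$ monochromatic triangles and solving for $k$ gives $k\approx 0.1204034549\,n$, whence $M\ge\tfrac{k^2}{4}(1+o(1))\approx 0.003624247988\,n^2(1+o(1))$. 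The one numerical check to record is $0.003624247988>\tfrac{1}{276}=0.00362318\ldots$, which secures the strict inequality $T(n)>\tfrac{1}{276}n^2(1+o(1))$.

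For the upper bound I would simply invoke \cite{Thot}, which supplies an explicit $3$-coloring of $[1,n]$ containing at most $\tfrac{2.08}{276}n^2(1+o(1))$ monochromatic Schur triples; combined with the lower bound, this yields the stated sandwich for $T(n)$.

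The main obstacle is not the upper bound (a citation) but making the lower-bound coverage argument airtight. I must justify that, to account for a prescribed number of monochromatic triangles using as few Schur triples as possible, one should greedily take the triples of smallest $z$, so that $\sum_{i\le k}\lfloor i/2\rfloor\cdot 2(n+1-i)$ really is the correct extremal quantity; and I must confirm that the $(1+o(1))$ error terms propagate so that the strict inequality $0.003624\ldots>\tfrac{1}{276}$ survives in the limit rather than being absorbed into the asymptotic notation.
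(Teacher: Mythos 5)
Your proposal is correct and follows essentially the same route as the paper: the lower bound is exactly the $r=3$ portion of the proof of Theorem \ref{thm4.3} (the transfer $\chi'(ij)=\chi(|i-j|)$, the CKPSTY bound of $\frac{n^3}{150}(1+o(1))$ monochromatic triangles, the coverage count $2(n+1-z)$ per triple, and the optimization yielding $k\approx 0.1204n$), where the paper itself remarks parenthetically that the resulting triples are in fact monochromatic Schur triples; the upper bound is, as in the paper, a direct citation of \cite{Thot}.
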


\section{The Inequality $x+y<z$}\label{Sec5}

In 2010, Kosek, Robertson, Sabo, and Schaal \cite{KRSS10} modified the multiplicity of Schur triples problem of Graham, R\"{o}dl, and Ruci\'{n}ski
\cite{GRR96} by changing $x+y=z$ to the system of inequalities  $x+y<z$ and $x\leq y$, and determined the minimum number of monochromatic solution over all 2-colorings of $[1,n]$.  In this section we approach this problem in the rainbow-monochromatic setting.

\subsection{Minimizing the Number of Monochromatic Solutions to $x+y<z$}

We start this subsection by generalizing the structural result in \cite{KRSS10} from two colors to an arbitrary number of colors
by showing that in order to minimize the number of monochromatic solutions to $x+y<z$ and $x\leq y$, we need only
consider colorings where each color class
consists of a single interval.

To state this structural result, we will use the following notation.

\vskip 5pt
\noindent
{\bf Notation.} Let $n,r \in \mathbb{Z}^+$ and let $0 \leq a_0 \leq a_1 \leq \cdots \leq a_{r-1}$ be integers such that
$\sum_{i=0}^{r-1} a_i = n$.  For {$k \geq 0$} an integer, let
$$
\mathcal{D}_{[k+1,k+n]}(a_0,a_1,\dots,a_{r-1}) = \big\{\chi:[k+1,k+n] \rightarrow \{0,1,\dots,r-1\}$$
\vspace*{-17pt}
$$\hspace*{214pt}
: |\chi^{-1}(i)|=a_i, 0 \leq i \leq r-1\big\};
$$

\vskip 5pt
\noindent
that is, $\mathcal{D}_{[k+1,k+n]}(a_0,a_1,\dots,a_{r-1})$ is the set of all $r$-colorings of $[k+1,k+n]$ where the color $i$ is used exactly $a_i$ times
for $0 \leq i \leq r-1$.  In particular, denote by
$\chi_{[k+1,k+n]}(a_0,a_1,\dots,a_{r-1})  \in \mathcal{D}_{[k+1,k+n]}(a_0,a_1,\dots,a_{r-1})$
the $r$-coloring
$
\chi_{[k+1,k+n]}(a_0,a_1,\dots, a_{r-1})(j)= c
$
where $c\in\{0,1,\dots,r-1\}$ is the unique integer such that $j \in \big[k+1+\sum_{i=0}^{c-1}a_i,$ $ k+\sum_{i=0}^{c}a_i\big]$,
where we take the empty sum to equal 0.

\vskip 5pt
In Theorem \ref{thm5.1} below, we find that the $r$-coloring that minimizes the number of monochromatic solutions to
$x+y < z$ with $x \leq y$ is the one where each color class is a single interval and the interval lengths are ordered in
non-decreasing lengths.

\vskip 5pt
\noindent
{\bf Notation.} We will let $M(\chi)$ represent the number of monochromatic solutions to $x+y<z$ with $x \leq y$ under the coloring $\chi$.

\begin{theorem}\label{thm5.1}
Let integer $k\geq 0$ be fixed and let $ n\in \mathbb{Z}^+$. For non-negative integers $0\leq a_0\leq a_1\leq \cdots\leq a_{r-1}$ (so that $a_0\leq \frac{n}{r}$) and $n=\sum_{i=0}^{r-1} a_i$, we have $M(\chi)\!\geq\! M\!\left(\chi_{[k+1,k+n]}(a_0,a_1,\dots,a_{r-1})\right)$
for any $\chi \!\in\! \mathcal{D}_{[k+1,k+n]}(a_0,a_1,\dots,a_{r-1})$.
\end{theorem}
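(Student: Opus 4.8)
The first observation is that every monochromatic solution uses three integers of one color, so there is no interaction between the color classes: writing, for a finite set $S\subseteq\mathbb{Z}^+$,
\[
N(S)=\#\{(x,y,z)\in S^3: x\le y,\ x+y<z\},
\]
we have $M(\chi)=\sum_{c=0}^{r-1}N\big(\chi^{-1}(c)\big)$. Thus the task is to partition $[k+1,k+n]$ into classes of the prescribed sizes $a_0\le\cdots\le a_{r-1}$ so as to minimize the sum of the $N$-values, and the assertion is that the partition into consecutive intervals of non-decreasing length is optimal. The presence of the shift parameter $k$ signals the intended organization: I would induct on $r$, peeling off the smallest class onto the leftmost block $[k+1,k+a_0]$ and recursing on the shifted interval $[k+a_0+1,k+n]$ (new shift $k'=k+a_0$, new sizes $a_1,\dots,a_{r-1}$). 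Once the smallest class is placed as $[k+1,k+a_0]$, one has $M(\chi)=N([k+1,k+a_0])+M\!\left(\chi|_{[k+a_0+1,k+n]}\right)$, the first summand is a constant, and the second is an $(r-1)$-color instance of the same problem on a shifted interval, minimized by the induction hypothesis; the base case $r=1$ is trivial. So the entire argument collapses to one \emph{peeling sub-claim}: a minimizer may be taken with $\chi^{-1}(0)=[k+1,k+a_0]$.

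\textbf{The exchange engine.} The tool for the sub-claim is the effect of an adjacent transposition that swaps the colors of $j$ and $j+1$, say $\chi(j)=\alpha$ and $\chi(j+1)=\beta$. Only solutions using $j$ or $j+1$ are affected, and since these two integers always get different colors, no monochromatic solution uses both. Writing $A=\chi^{-1}(\alpha)\setminus\{j\}$, $B=\chi^{-1}(\beta)\setminus\{j+1\}$, and letting $g(v;S)$ count the solutions with $x\le y$, $x+y<z$, all coordinates in $S\cup\{v\}$ and at least one coordinate equal to $v$, one gets
\[
\Delta M=\big(g(j;B)+g(j+1;A)\big)-\big(g(j;A)+g(j+1;B)\big)=h(A)-h(B),
\]
where $h(S)=g(j+1;S)-g(j;S)$. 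Splitting $g(v;S)$ according to whether the special value plays the role of the large term $z$ or of a small term in $\{x,y\}$, the marginal effect of raising that value from $j$ to $j+1$ is
\[
h(S)=\#\{(x,y)\in S^2: x\le y,\ x+y=j\}-\#\{z\in S: z\in\{2j+1,2j+2\}\}-\#\{(t,z)\in S^2: z-t=j+1\}.
\]
The first term is the gain from acting as $z$ (one new pair satisfies $x+y<z$), and the two subtracted terms are the losses from acting as a small term. The smallest class is moved leftward by repeatedly applying such transpositions with $\beta=0$ at $j+1$.

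\textbf{The clean stage: sorting intervals.} Once all classes are intervals, $M$ is literally $\sum_j N(I_j)$ for consecutive intervals $I_j$ tiling $[k+1,k+n]$, and $N$ of $I=[p,p+m-1]$ depends only on the length $m$ and the left endpoint $p$; a direct triangular count gives $N(p,m)=0$ for $p\ge m-1$ and makes $N(p,m)$ strictly decreasing in $p$ otherwise (asymptotically $N(p,m)\sim\tfrac1{12}(m-p)_+^3$). To see that the non-decreasing-length arrangement is optimal it then suffices to treat an adjacent pair of intervals of lengths $m<m'$ and verify the inverse-Monge inequality
\[
N(p,m)+N(p+m,m')\le N(p,m')+N(p+m',m),
\]
which I would check from the explicit formula (it reduces asymptotically to $(m-p)_+^3+(m'-m-p)_+^3\le(m'-p)_+^3$). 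A finite bubble sort of the intervals, each step non-increasing, then yields $\chi_{[k+1,k+n]}(a_0,\dots,a_{r-1})$.

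\textbf{The main obstacle.} The hard part is forcing the interval structure needed before the clean stage applies, because the sign of $\Delta M=h(A)-h(B)$ is \emph{not} controlled by the class sizes alone: with $A=\{1,2,3\}$, $B=\{9,10\}$ and $j=4$ one computes $h(A)=2$ and $h(B)=-2$, so a greedy transposition that moves the smaller class leftward can \emph{increase} $M$. Hence no naive ``sort by size'' works, and the peeling sub-claim cannot be established by greedy adjacent swaps. The plan is instead to argue at a minimizer: among all colorings in $\mathcal{D}_{[k+1,k+n]}(a_0,\dots,a_{r-1})$ attaining the minimum of $M$, choose one with the fewest bichromatic adjacencies $\#\{i:\chi(i)\neq\chi(i+1)\}$; if some class fails to be an interval, I would produce an \emph{uncrossing} transposition that, by minimality, cannot decrease $M$ and, using the role decomposition above together with the consecutive-integer structure of the swapped positions, has $\Delta M\le 0$, hence $\Delta M=0$ while strictly lowering the adjacency count—contradicting the choice. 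Verifying that this uncrossing exchange is indeed non-increasing (the delicate case analysis in the displayed formula for $h$, distinguishing the $z$-role from the small-term role) is the crux of the whole theorem; everything downstream is the explicit interval arithmetic of the clean stage.
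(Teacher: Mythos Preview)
Your write-up correctly isolates the problem as minimizing $\sum_c N(\chi^{-1}(c))$ and correctly diagnoses that naive adjacent swaps do not monotonically sort the coloring (your example with $A=\{1,2,3\}$, $B=\{9,10\}$, $j=4$ is apt). But you then stop exactly where the content lies: you propose a tie-broken minimizer and an ``uncrossing'' transposition, and explicitly label the verification that $\Delta M\le 0$ for that transposition as ``the crux of the whole theorem'' without carrying it out. Since, as your own example shows, the sign of $h(A)-h(B)$ depends on the positions of $A$ and $B$ and not merely on their sizes, there is no default reason the uncrossing direction should be non-increasing; until that case analysis is actually done you have a plan, not a proof. (Your ``clean stage'' is also redundant with your own induction: once the peeling sub-claim gives $\chi^{-1}(0)=[k+1,k+a_0]$, the inductive hypothesis on $[k+a_0+1,k+n]$ already returns the sorted intervals, so no separate Monge-type sort is needed.)

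The paper's argument is organized differently and is much shorter because it \emph{outsources} the hard two-color work. It inducts on $r$ with base case $r=2$, which is precisely the result of \cite{KRSS10}, and in the inductive step it \emph{merges the two largest} classes $r-2$ and $r-1$ into a single ``blue'' class of size $b=a_{r-2}+a_{r-1}$, rather than peeling off the smallest. Viewing the $r$-coloring as an $(r-1)$-coloring, the induction hypothesis places colors $0,\dots,r-3$ and the blue block as sorted intervals with blue on the right; a short exchange observation (moving a blue integer from a small position past a non-blue integer at a larger position cannot decrease $M$, already argued in \cite{KRSS10}) pins the blue block to $[k+n-b+1,k+n]$; finally the $r=2$ base case, applied on that shifted interval, splits blue into the two sorted sub-intervals of sizes $a_{r-2}$ and $a_{r-1}$. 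By choosing the trivial base case $r=1$ you forced yourself to reprove the content of \cite{KRSS10} inside your peeling sub-claim, which is exactly where you got stuck.
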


\begin{proof}[Proof of Theorem \ref{thm5.1}]  We use induction on $r$, with $r=2$ being the result in \cite{KRSS10}.
We assume the result for all $(r-1)$-colorings of $[k+1,k+n]$ for all non-negative integers $k$ and all positive integers $n$.
Consider an arbitrary $r$-coloring of $[k+1,k+n]$ with color class sizes $0 \leq a_0 \leq a_1 \leq \cdots \leq a_{r-1}$.
Identify all integers of colors $r-2$ or $r-1$ with a new color, call it blue, and let $b=a_{r-2}+a_{r-1}$.  We now view the $r$-coloring
as an $(r-1)$-coloring with color class sizes $0 \leq a_0 \leq a_1 \leq \cdots \leq a_{r-3} \leq b$.  By the induction hypothesis,
$\chi_{[k+1,k+n]}(a_0,a_1,\dots,a_{r-3},b)$ achieves the minimum number of monochromatic solutions, with the understanding
that the $b$ ``blue" integers may not produce monochromatic solutions under the original $r$-coloring.

We claim that the $b$ ``blue" integers in $[k+n-b+1 ,k+n]$ when reverted to their original color may be minimized among themselves
(i.e., within the interval $[k+n-b+1 ,k+n]$)
in order to provide the overall minimum number of solutions in our original $r$-coloring.  Assuming otherwise, we must have a ``blue"
integer   interchanged with an integer of color $j \in \{0,1,\dots,r-3\}$.  As argued in \cite{KRSS10}, this cannot
decrease the total number of monochromatic solutions (letting $z \in [k+n-b+1 ,k+n]$ have color $j$ and letting $x < k+n-b+1$ be ``blue" means potentially more
solutions to $x+y<z$ of color $j$ and potentially more ``blue" solutions as well).

Hence, in order to minimize the total number of monochromatic solutions, we should minimize the number of monochromatic
solutions in $[k+n-b+1 ,k+n]$ when the ``blue" integers are reverted to their original colors.  Performing this reversion, we now have a 2-coloring of the
interval $[k+n-b+1 ,k+n]$, and our base case informs us that the first $a_{r-2}$ integers should be one color and
the last $a_{r-1}$ integers should be the other color.  As a result, we obtain the $r$-coloring $\chi_{[k+1,k+n]}(a_0,a_1,\dots,a_{r-1}),$
which completes the induction argument.
\end{proof}

Having the structural result of Theorem \ref{thm5.1}, for any given number of colors, determining the minimum number of solutions
to $x+y<z$ with $x \leq y$ reduces to a straightforward (but  potentially very long) calculus problem.  We present the solution for
$r=3$ and leave any other number of colors to the interested reader.  The main counting lemma we will use is from \cite{KRSS10} and is stated next.

\begin{lemma}[\cite{KRSS10}] \label{CountLemma} Let $I(s,t)$ be the number of solutions to $x+y<z$ with $x \leq y$ that reside in $[s,t]$. Then
$I(s,t) =  \frac{1}{12}(t-2s)^3+O((t-2s)^2) \mbox{for $t > 2s$}$ and is otherwise 0.

\end{lemma}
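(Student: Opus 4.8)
The plan is to count the admissible integer triples directly and then replace the lattice-point count by a volume. By definition $I(s,t)$ counts triples $(x,y,z)$ with $s \le x \le y$, $x+y < z$, and $z \le t$. First I would dispose of the degenerate case: since $y \ge x \ge s$, the smallest possible value of $x+y$ is $2s$, so every admissible $z$ satisfies $z > x+y \ge 2s$. When $t \le 2s$ there is no such $z$ in range, whence $I(s,t)=0$, as claimed.

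For the main range $t > 2s$, I would pass from the exact sum to its continuous approximation and control the discrepancy. For fixed $x,y$ with $s \le x \le y$ and $x+y<t$, the number of valid $z$ equals the number of integers in $(x+y,\,t]$, namely $t-(x+y)$ up to a bounded additive error. Summing over the admissible $(x,y)$ and comparing with the corresponding integral produces only a boundary error: the feasible region $\{s \le x \le y,\ x+y<t\}$ has diameter $O(t-2s)$, so the total lattice-versus-volume discrepancy is confined to a boundary layer of area $O((t-2s)^2)$, which accounts for the stated error term.

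The core computation is then the triple integral
\[
\int_{s}^{t/2}\!\int_{x}^{t-x} (t-x-y)\,dy\,dx,
\]
whose limits encode the constraints $x \ge s$, $y \ge x$, and $y < t-x$ (so that $t-x-y>0$), the last two forcing $x < t/2$. The inner integral evaluates via the substitution $u=t-x-y$ to $\tfrac12(t-2x)^2$, and the remaining integral, under $v=t-2x$, gives
\[
\frac12\int_{s}^{t/2}(t-2x)^2\,dx \;=\; \frac14\cdot\frac{(t-2s)^3}{3} \;=\; \frac{(t-2s)^3}{12}.
\]
Combining this main term with the $O((t-2s)^2)$ discrepancy yields the lemma. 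I do not anticipate a genuine obstacle here; the only step requiring care is the honest accounting of the error when replacing the integer sum by the integral, but this is routine precisely because that error is supported on a boundary layer of area $O((t-2s)^2)$ and is therefore absorbed into the stated remainder.
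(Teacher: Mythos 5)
Your proof is correct, but it proceeds differently from the source the paper relies on. The paper itself offers no argument: it simply cites \cite{KRSS10}, where the count is obtained as an exact closed form, namely
\[
I(s,t)=\tfrac{1}{12}(t-2s)\bigl(4s^2-4st-6s+t^2+3t+2\bigr)
=\tfrac{1}{12}(t-2s)(t-2s+1)(t-2s+2)
\]
for $t>2s$ (and $0$ otherwise), from which the asymptotic $\tfrac{1}{12}(t-2s)^3+O((t-2s)^2)$ is immediate by expansion. That exact enumeration comes from summing $\sum_{y=x}^{t-1-x}(t-x-y)=\binom{t-2x+1}{2}$ over $x$, i.e., closed-form summation rather than integration. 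Your route instead computes the volume
$\int_{s}^{t/2}\int_{x}^{t-x}(t-x-y)\,dy\,dx=\tfrac{1}{12}(t-2s)^3$
and absorbs the lattice-versus-volume discrepancy into the error term; your degenerate case ($z>x+y\ge 2s$ forces $I(s,t)=0$ when $t\le 2s$) and both substitutions check out. The trade-off: the exact-summation approach yields a formula valid for every integer pair $(s,t)$ with no error analysis and is what \cite{KRSS10} needs for finite-$n$ statements, whereas your argument is self-contained, shorter, and generalizes more readily to other linear inequalities since it uses only the volume of the feasible simplex plus a boundary estimate. One small point of care: your phrase about the discrepancy being ``confined to a boundary layer of area $O((t-2s)^2)$'' conflates two contributions --- the boundary cells number $O(t-2s)$ and each contributes $O(t-2s)$ because the summand is that large, while the interior cells contribute $O(1)$ each (Lipschitz variation of $t-x-y$) over $O((t-2s)^2)$ cells; both totals are $O((t-2s)^2)$, so your conclusion stands, but the accounting should be stated that way.
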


We now present the $r=3$ optimization result.

\begin{theorem}
For any fixed integer $k\geq 0$, the minimum number of monochromatic solutions to $x+y<z$, $x\leq y<z$ that can occur in any $3$-coloring of $[k+1,k+n]$ is
$M_k(n)=Cn^3+O_k(n^2)$, where $$C=\frac{89-36\sqrt{2}}{63948}\approx 0.0005956138.$$
\end{theorem}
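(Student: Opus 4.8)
The plan is to combine the structural reduction of Theorem \ref{thm5.1} with the interval count of Lemma \ref{CountLemma}, and then to solve a clean three-variable optimization. By Theorem \ref{thm5.1} it suffices to minimize over the colorings $\chi_{[k+1,k+n]}(a_0,a_1,a_2)$ in which each color occupies one interval, with $0\le a_0\le a_1\le a_2$ and $a_0+a_1+a_2=n$. Since the three color classes are disjoint intervals, every monochromatic solution lies entirely inside one interval, so the total count equals $I(s_1,t_1)+I(s_2,t_2)+I(s_3,t_3)$, where the intervals are $[k+1,\,k+a_0]$, $[k+a_0+1,\,k+a_0+a_1]$, and $[k+a_0+a_1+1,\,k+n]$.

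First I would compute $t_i-2s_i$ for each interval, obtaining $a_0-k-2$, $a_1-a_0-k-2$, and $a_2-a_0-a_1-k-2$, respectively. Applying Lemma \ref{CountLemma} and expanding each cube, the fixed shift $k+2$ and the $O((t-2s)^2)$ errors all collapse into $O_k(n^2)$ (since $k$ is fixed and each effective length is $O(n)$), even in the degenerate cases where an effective length is $O_k(1)$. Thus the number of monochromatic solutions under $\chi_{[k+1,k+n]}(a_0,a_1,a_2)$ is
\[
\frac{1}{12}\Big(a_0^{3}+(a_1-a_0)^{3}+(a_2-a_0-a_1)_+^{3}\Big)+O_k(n^2),
\]
where $(\,\cdot\,)_+=\max(\cdot,0)$ is needed only on the third term, which vanishes when the rightmost interval is too far right to contain any solution.

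The heart of the proof is minimizing $g(a_0,a_1,a_2)=a_0^{3}+(a_1-a_0)^{3}+(a_2-a_0-a_1)_+^{3}$. Writing $\alpha_i=a_i/n$ and introducing the \emph{effective lengths} $p=\alpha_0$, $q=\alpha_1-\alpha_0$, $w=\alpha_2-\alpha_0-\alpha_1$, the normalization $\alpha_0+\alpha_1+\alpha_2=1$ becomes the single linear constraint $4p+2q+w=1$ (with $p,q\ge 0$ and $p+w\ge 0$ encoding the ordering), and the objective becomes $p^{3}+q^{3}+w_+^{3}$. Since $t\mapsto t_+^{3}$ is convex, this objective is convex on the feasible polytope, so any KKT stationary point is the global minimum. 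Solving $3p^{2}=4\lambda$, $3q^{2}=2\lambda$, $3w^{2}=\lambda$ gives $p:q:w=2:\sqrt2:1$; substituting into $4p+2q+w=1$ yields $p=2t,\ q=\sqrt2\,t,\ w=t$ with $t=(9+2\sqrt2)^{-1}$, all strictly positive, so the point is genuinely interior and the ordering constraints $\alpha_0\le\alpha_1\le\alpha_2$ hold automatically. The minimum value is $(9+2\sqrt2)t^{3}=(9+2\sqrt2)^{-2}=(89+36\sqrt2)^{-1}=\tfrac{89-36\sqrt2}{5329}$.

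Finally I would convert back. For the lower bound, any integer partition gives $g(a_0,a_1,a_2)=n^{3}g(\alpha_0,\alpha_1,\alpha_2)\ge \tfrac{89-36\sqrt2}{5329}\,n^{3}$, so $M_k(n)\ge Cn^{3}+O_k(n^2)$ with $C=\tfrac{1}{12}\cdot\tfrac{89-36\sqrt2}{5329}=\tfrac{89-36\sqrt2}{63948}$; for the matching upper bound, choosing an integer partition within $O(1)$ of $\big(2t,(2+\sqrt2)t,(5+\sqrt2)t\big)n$ changes $g$ by only $O_k(n^2)$ and realizes the value $Cn^{3}+O_k(n^2)$. The main obstacle is twofold: carefully verifying in the reduction step that the shift $k+2$ and the Lemma \ref{CountLemma} error terms never contaminate the $n^{3}$ coefficient, and recognizing the substitution $(p,q,w)$ — it is exactly what turns an otherwise messy constrained optimization (the naive elimination of $\alpha_2$ produces a stationarity condition involving $\sqrt{A}$ with $A=(63-28\sqrt2)/343$) into the transparent linear problem $\min\{p^{3}+q^{3}+w_+^{3}:4p+2q+w=1\}$ with its closed-form answer.
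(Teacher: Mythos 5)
Your proposal is correct, and its skeleton is the paper's: Theorem \ref{thm5.1} reduces the problem to the ordered interval colorings $\chi_{[k+1,k+n]}(a_0,a_1,a_2)$, and Lemma \ref{CountLemma} converts the count into $\tfrac{1}{12}$ times a sum of cubes of effective lengths, with the $k$-shifts and degenerate-interval corrections absorbed uniformly into $O_k(n^2)$; your observation that the positive part is genuinely needed only on the third term (the first two being $O_k(1)$ whenever negative) is exactly right and matches the paper's implicit bookkeeping, including its separate disposal of the case $a<k+2$. Where you genuinely depart from the paper is the optimization step, which is the heart of the proof. The paper splits the $(a,b)$ domain into four regions according to the signs of $b-a-k-2$ and $n-2a-2b-k-2$, hands each piecewise-cubic objective to Mathematica, and takes the least of the four regional minima $\frac{89-36\sqrt{2}}{63948}n^3$, $\frac{9-4\sqrt{2}}{4704}n^3$, $\frac{1}{972}n^3$, $\frac{1}{768}n^3$. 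You instead encode the sign condition once and for all in $w_+^3$, pass to effective-length coordinates $(p,q,w)$ in which the constraint becomes the linear equation $4p+2q+w=1$, note that $p^3+q^3+w_+^3$ is convex on the feasible polytope, and solve the Lagrange system $3p^2=4\lambda$, $3q^2=2\lambda$, $3w^2=\lambda$ by hand, getting $p:q:w=2:\sqrt{2}:1$ and the value $(9+2\sqrt{2})^{-2}=\frac{89-36\sqrt{2}}{5329}$, hence $C=\frac{89-36\sqrt{2}}{63948}$ after dividing by $12$. Your route buys a computer-free, certified global minimum: convexity makes the single interior KKT point globally optimal, so the configurations in which the second or third interval contributes no solutions (the paper's regions 2--4, which appear in your formulation as the portions of the polytope with $q$ negligible and/or $w\le 0$) are ruled out automatically rather than by comparing four machine-computed values. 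The paper's route is more mechanical and would still apply if the objective failed to be convex, but for this problem your argument is cleaner and fully self-contained; both yield the same two-sided asymptotics once the optimal real point is rounded to an integer partition, as you do at the end.
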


\begin{proof}
Let $I$ be the number of monochromatic solutions to $x+y<z$, $x\leq y$ that can occur in any $3$-coloring of $[k+1,k+n]$.
By Theorem \ref{thm5.1} we need only consider $\chi_{[k+1,k+n]}(a,b,n-a-b)$ with $a \leq b \leq n-a-b$. We may assume that $a\geq k+2$; otherwise,
$[2k+2,k+n]$ is 2-colored and the solution  {is $\frac{n^3}{12(1+2\sqrt{2})^2}+O_k(n^2)$} as given in \cite{KRSS10}. From Lemma \ref{CountLemma},
we need only consider 
 
$$
I=
\begin{cases}  
\frac{(a-k-2)^3+(b-a-k-2)^3+(n-2a-2b-k-2)^3}{12}& \mbox{ if }  a<b-k-2 \mbox{ and}\\ & \hspace*{12pt} a<(n-2b-k-2)/2; \\[2pt]
 \frac{(a-k-2)^3+(b-a-k-2)^3}{12} & \mbox{ if }  a<b-k-2\mbox{ and}\\ & \hspace*{12pt}  a\geq (n-2b-k-2)/2; \\[2pt]
 \frac{(a-k-2)^3+(n-2a-2b-k-2)^3}{12} & \mbox{ if }a\geq b-k-2\mbox{ and} \\ & \hspace*{12pt} a<(n-2b-k-2)/2; \\[2pt]
 \frac{(a-k-2)^3}{12}& \mbox{ if }a\geq b-k-2\mbox{ and}\\ & \hspace*{12pt}  a\geq (n-2b-k-2)/2,
\end{cases}
$$ \normalsize
 {where the expressions are given up to $O_k(n^2)$.}

We use Mathematica to obtain the minimum values over each region.  The result
follows by taking the minimum over these regional minimum values.
\vskip 5pt
For  $a<b-k-2$, $a< (n-2b-k-2)/2$, we obtain
$\frac{89 - 36\sqrt{2}}{63948}n^3+O_k(n^2).$
\vskip 5pt

For $a<b-k-2$, $a\geq (n-2b-k-2)/2$, we obtain
$\frac{9-4\sqrt{2}}{4704}n^3+O_k(n^2).$

\vskip 5pt

For $a\geq b-k-2$, $a<(n-2b-k-2)/2$, we obtain
$\frac{1}{972}n^3+O_k(n^2).$
\vskip 5pt

For $a\geq b-k-2$, $a\geq (n-2b-k-2)/2$, we obtain
$\frac{1}{768}n^3+O_k(n^2).$
\vskip 5pt

The minimum of the above values is $\frac{89 - 36\sqrt{2}}{63948}n^3+O_k(n^2)$, which completes the proof.
\end{proof}

\subsection{Minimizing the Number of Monochromatic and Rainbow Solutions to $x+y<z$}

It is natural to consider the rainbow and monochromatic version of this problem as was done in Theorem \ref{thm4.3}.

\begin{definition}
Let $n,r \in \mathbb{Z}^+$. For a system of linear inequalities $\mathcal{I}$ and an $r$-coloring $\chi$ of $[1,n]$, let
 $G_\chi(\mathcal{I};r)$ represent the number of rainbow    and monochromatic solutions to $\mathcal{I}$ under $\chi$.
 Let $GM(\mathcal{I};r) = \min_\chi (G_\chi(\mathcal{I};r))$, where the minimum is over
 all $r$-colorings of $[1,n]$.
 \end{definition}

Below, we determine an upper bound on $GM(\mathcal{J};3),$ where $\mathcal{J}$ is the system  $x+y<z$ and $x\leq y$.
Our approach will be to limit the colorings investigated, as evidenced by the following notation.

\vskip 5pt
\noindent
{{\bf Notation.} Let $\mathcal{D}$ be the set of $3$-colorings of $[1,n]$ where each color consists of a single interval.
Let $\chi(a,b) \in \mathcal{D}$ be the 3-coloring given by coloring $[1,an]$ red, $(an,(a+b)n]$ blue, and
$((a+b)n,n]$ green, where we have $a,b\geq 0$ and $a+b \leq 1$.}
 Let $GM_{\mathcal{D}}(\mathcal{J};r) = \min_\chi (G_\chi(\mathcal{J};r))$, where the minimum is over
 all $3$-colorings in $\mathcal{D}$.

\vskip 5pt
Clearly  $GM(\mathcal{J};3) \leq GM_{\mathcal{D}}(\mathcal{J};3),$ so we continue
 by determining $GM_{\mathcal{D}}(\mathcal{J};3)$.
From Lemma \ref{CountLemma}, we see that there are $\frac{(d-2c)^3}{12}n^3 + O(n^2)$ solutions to $\mathcal{J}$ in $[cn,dn]$ provided
$d>2c$ (and no solution otherwise).  This allows us to the state the following result.

\begin{lemma}\label{MTrips} Let $\chi(a,b) \in \mathcal{D}$ and let $M(\chi)$ be the number of monochromatic triples  {$(x,y,z)$} such that $x+y<z$ {and $x \leq y<z$},
under $\chi$.  Then, up to $O(n^2)$, we have
$$
M(\chi) = \left\{
\begin{array}{rl}
\frac{1}{12}(a^3+(b-a)^3)n^3&\mbox{if $a<b$ and $a+b > \frac12$}\\[5pt]
\frac{1}{12}(a^3+(b-a)^3+(1-2(a+b))^3)n^3 &\mbox{if $a<b$ and $a+b \leq \frac12$}\\[5pt]
\frac{1}{12}a^3n^3 &\mbox{if $a\geq b$ and $a+b > \frac12$}\\[5pt]
\frac{1}{12}(a^3+(1-2(a+b))^3)n^3 &\mbox{if $a \geq b$ and $a+b \leq \frac12$}\,.
\end{array}
\right.
$$
\end{lemma}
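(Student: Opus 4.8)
The plan is to exploit the fact that under $\chi(a,b)$ each of the three colors occupies a single interval, so a monochromatic triple $(x,y,z)$ with $x\le y<z$ and $x+y<z$ is precisely a solution of $\mathcal{J}$ whose three entries all lie in one of the three color intervals. Consequently $M(\chi)$ splits as the sum of the number of solutions of $\mathcal{J}$ contained in the red interval $[1,an]$, the blue interval $(an,(a+b)n]$, and the green interval $((a+b)n,n]$, and I would count each of these three contributions separately and add them.

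For each color I would apply the consequence of Lemma \ref{CountLemma} recorded just above, namely that $[cn,dn]$ contains $\frac{(d-2c)^3}{12}n^3+O(n^2)$ solutions of $\mathcal{J}$ when $d>2c$ and none otherwise. Taking $(c,d)=(0,a)$, $(a,a+b)$, and $(a+b,1)$ respectively, the red interval contributes $\frac{a^3}{12}n^3$, the blue interval contributes $\frac{(b-a)^3}{12}n^3$ exactly when $a+b>2a$, that is when $b>a$, and the green interval contributes $\frac{(1-2(a+b))^3}{12}n^3$ exactly when $1>2(a+b)$, that is when $a+b<\tfrac12$; otherwise the respective interval is too short (its $d\le 2c$) and contributes nothing.

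Finally I would assemble the four cases of the statement according to the signs of $b-a$ and $\tfrac12-(a+b)$. The red term $\frac{a^3}{12}n^3$ is always present. The blue term is present exactly in the two cases with $a<b$ and absent when $a\ge b$, while the green term is present exactly when $a+b\le\tfrac12$; here I note that the boundary values $b=a$ and $a+b=\tfrac12$ make the corresponding cube $(b-a)^3$ or $(1-2(a+b))^3$ vanish, so the strict versus weak inequalities in the case conditions are immaterial and the stated formulas hold on the closed regions. Summing the three contributions in each region yields precisely the four claimed expressions.

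The one step deserving genuine care — and the place I would be most careful — is verifying that all boundary effects are truly $O(n^2)$. Switching between the integer half-open intervals that actually partition $[1,n]$ and the closed real intervals $[cn,dn]$ used above shifts each endpoint by a bounded amount, and shifting $s$ by $O(1)$ in $I(s,t)$ perturbs $\frac{1}{12}(t-2s)^3$ by only $O((t-2s)^2)=O(n^2)$. Thus these discrepancies, together with the error already present in Lemma \ref{CountLemma}, are uniformly $O(n^2)$ and leave the leading cubic coefficients undisturbed, which is all the statement asserts.
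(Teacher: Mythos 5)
Your proof is correct and is essentially the paper's own argument: since each color class of $\chi(a,b)$ is a single interval, monochromatic solutions are exactly solutions of $\mathcal{J}$ lying inside one interval, and the paper likewise obtains the four cases by applying Lemma \ref{CountLemma} to $[cn,dn]$ with $(c,d)=(0,a)$, $(a,a+b)$, $(a+b,1)$ and summing. Your extra care with the $O(1)$ endpoint shifts and the vanishing of the cubes on the boundary regions only makes explicit what the paper leaves implicit.
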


We next enumerate the number of rainbow solutions in a given $\chi(a,b)$.
 {We start by noting that when $x \in [1,an]$ and $y \in (an,(a+b)n]$ with
$(a+b)n \leq x+y \leq n$ we require several distinguishing
cases depending on the sizes of $a$ and $b$; see Figure \ref{RainbowTypes}}.

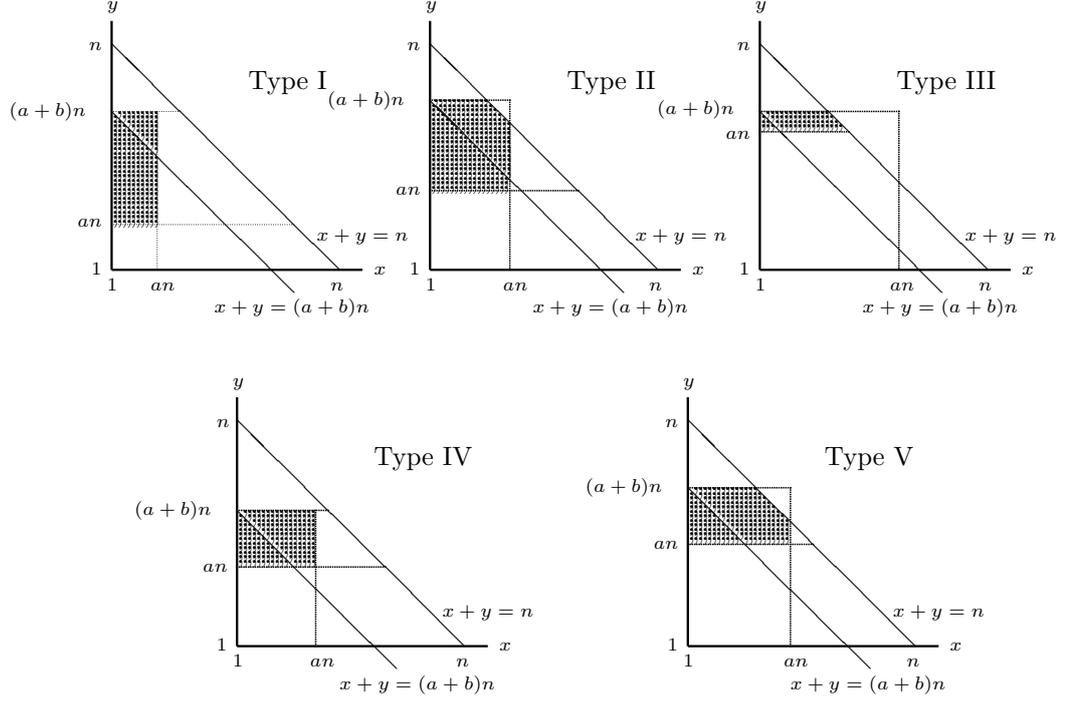
\begin{figure}[h!] 
\setlength{\unitlength}{.3mm}
\begin{subfigure}[t]{.3\textwidth}\setlength{\tabcolsep}{.5em} 
\begin{picture}(0,130)(-45,-10)
\linethickness{.3mm}
\put(0,0){\line(0,1){110}}
\put(0,0){\line(1,0){110}}
\put(100,0){\line(-1,1){100}}
\put(80,-10){\line(-1,1){80}}

\linethickness{.1mm}

\dottedline{1}(0,70)(30,70)
\dottedline{1}(20,0)(20,70)
\dottedline{1}(0,20)(80,20)

\put(96,-9){\footnotesize $n$}
\put(-9,-2){\footnotesize 1}
\put(-2,-9){\footnotesize 1}
\put(-2,115){\footnotesize $y$}
\put(115,-2){\footnotesize $x$}
\put(-10,97){\footnotesize $n$}
\put(-45,68){\footnotesize $(a+b)n$}
\put(17,-9){\footnotesize $an$}
\put(-15,19){\footnotesize $an$}
\put(45,-19){\footnotesize $x+y=(a+b)n$}
\put(90,13){\footnotesize $x+y=n$}

\multiput(2,69)(2,0){9}{,}
\multiput(4,67)(2,0){8}{,}
\multiput(6,65)(2,0){7}{,}
\multiput(8,63)(2,0){6}{,}
\multiput(10,61)(2,0){5}{,}
\multiput(12,59)(2,0){4}{,}
\multiput(14,57)(2,0){3}{,}
\multiput(16,55)(2,0){2}{,}
\multiput(18,53)(2,0){1}{,}

{\multiput(0,67)(2,0){1}{,}
\multiput(0,65)(2,0){2}{,}
\multiput(0,63)(2,0){3}{,}
\multiput(0,61)(2,0){4}{,}
\multiput(0,59)(2,0){5}{,}
\multiput(0,57)(2,0){6}{,}
\multiput(0,55)(2,0){7}{,}
\multiput(0,53)(2,0){8}{,}
\multiput(0,51)(2,0){9}{,}
\multiput(0,49)(2,0){10}{,}
\multiput(0,47)(2,0){10}{,}
\multiput(0,45)(2,0){10}{,}
\multiput(0,43)(2,0){10}{,}
\multiput(0,41)(2,0){10}{,}
\multiput(0,39)(2,0){10}{,}
\multiput(0,37)(2,0){10}{,}
\multiput(0,35)(2,0){10}{,}
\multiput(0,33)(2,0){10}{,}
\multiput(0,31)(2,0){10}{,}
\multiput(0,29)(2,0){10}{,}
\multiput(0,27)(2,0){10}{,}
\multiput(0,25)(2,0){10}{,}
\multiput(0,23)(2,0){10}{,}
\multiput(0,21)(2,0){10}{,}}

\put(60,80){Type I}

\end{picture}
\end{subfigure}
\begin{subfigure}[t]{.3\textwidth}\setlength{\tabcolsep}{.5em}
\begin{picture}(0,130)(-50,-10)
\linethickness{.3mm}
\put(0,0){\line(0,1){110}}
\put(0,0){\line(1,0){110}}
\put(100,0){\line(-1,1){100}}
\put(85,-10){\line(-1,1){85}}

\linethickness{.2mm}

\dottedline{1}(0,75)(35,75)
\dottedline{1}(35,0)(35,75)
\dottedline{1}(0,35)(65,35)

\put(96,-9){\footnotesize $n$}
\put(-9,-2){\footnotesize 1}
\put(-2,-9){\footnotesize 1}
\put(-2,115){\footnotesize $y$}
\put(115,-2){\footnotesize $x$}
\put(-10,97){\footnotesize $n$}
\put(-45,73){\footnotesize $(a+b)n$}
\put(32,-9){\footnotesize $an$}
\put(-15,33){\footnotesize $an$}
\put(45,-19){\footnotesize $x+y=(a+b)n$}
\put(90,13){\footnotesize $x+y=n$}

\multiput(1,74)(2,0){12}{,}
\multiput(3,72)(2,0){12}{,}
\multiput(5,70)(2,0){12}{,}
\multiput(7,68)(2,0){12}{,}
\multiput(9,66)(2,0){12}{,}
\multiput(11,64)(2,0){12}{,}
\multiput(13,62)(2,0){11}{,}
\multiput(15,60)(2,0){10}{,}
\multiput(17,58)(2,0){9}{,}
\multiput(19,56)(2,0){8}{,}
\multiput(21,54)(2,0){7}{,}
\multiput(23,52)(2,0){6}{,}
\multiput(25,50)(2,0){5}{,}
\multiput(27,48)(2,0){4}{,}
\multiput(29,46)(2,0){3}{,}
\multiput(31,44)(2,0){2}{,}
\multiput(33,42)(2,0){1}{,}

{\multiput(0,72)(2,0){1}{,}
\multiput(0,70)(2,0){2}{,}
\multiput(0,68)(2,0){3}{,}
\multiput(0,66)(2,0){4}{,}
\multiput(0,64)(2,0){5}{,}
\multiput(0,62)(2,0){6}{,}
\multiput(0,60)(2,0){7}{,}
\multiput(0,58)(2,0){8}{,}
\multiput(0,56)(2,0){9}{,}
\multiput(0,54)(2,0){10}{,}
\multiput(0,52)(2,0){11}{,}
\multiput(0,50)(2,0){12}{,}
\multiput(0,48)(2,0){13}{,}
\multiput(0,46)(2,0){14}{,}
\multiput(0,44)(2,0){15}{,}
\multiput(0,42)(2,0){16}{,}
\multiput(0,40)(2,0){17}{,}
\multiput(0,38)(2,0){17}{,}
\multiput(0,36)(2,0){17}{,}}

\put(60,80){Type II}
\end{picture}
\end{subfigure}
\begin{subfigure}[t]{.30\textwidth}\setlength{\tabcolsep}{.5em}
\begin{picture}(0,130)(-60,-10)
\linethickness{.3mm}
\put(0,0){\line(0,1){110}}
\put(0,0){\line(1,0){110}}
\put(100,0){\line(-1,1){100}}
\put(80,-10){\line(-1,1){80}}

\linethickness{.2mm}

\dottedline{1}(0,70)(61,70)
\dottedline{1}(61,0)(61,70)
\dottedline{1}(0,61)(39,61)

\put(96,-9){\footnotesize $n$}
\put(-9,-2){\footnotesize 1}
\put(-2,-9){\footnotesize 1}
\put(-2,115){\footnotesize $y$}
\put(115,-2){\footnotesize $x$}
\put(-10,97){\footnotesize $n$}
\put(-45,69){\footnotesize $(a+b)n$}
\put(57,-9){\footnotesize $an$}
\put(-15,58){\footnotesize $an$}
\put(45,-19){\footnotesize $x+y=(a+b)n$}
\put(90,13){\footnotesize $x+y=n$}

\multiput(2,69)(2,0){14}{,}
\multiput(4,67)(2,0){14}{,}
\multiput(6,65)(2,0){14}{,}
\multiput(8,63)(2,0){14}{,}
{\multiput(0,67)(2,0){1}{,}
\multiput(0,65)(2,0){2}{,}
\multiput(0,63)(2,0){3}{,}}

\put(60,80){Type III}

\end{picture}
\end{subfigure}

\vspace*{30pt}
\begin{subfigure}[t]{.33\textwidth}\setlength{\tabcolsep}{.5em}
\begin{picture}(0,130)(-100,-10)
\linethickness{.3mm}
\put(0,0){\line(0,1){110}}
\put(0,0){\line(1,0){110}}
\put(100,0){\line(-1,1){100}}
\put(70,-10){\line(-1,1){70}}

\linethickness{.2mm}

\dottedline{1}(0,60)(40,60)
\dottedline{1}(34.5,0)(34.5,60)
\dottedline{1}(0,35)(65,35)

\put(96,-9){\footnotesize $n$}
\put(-9,-2){\footnotesize 1}
\put(-2,-9){\footnotesize 1}
\put(-2,115){\footnotesize $y$}
\put(115,-2){\footnotesize $x$}
\put(-9,97){\footnotesize $n$}
\put(-45,58){\footnotesize $(a+b)n$}
\put(32,-9){\footnotesize $an$}
\put(-15,33){\footnotesize $an$}
\put(45,-19){\footnotesize $x+y=(a+b)n$}
\put(90,13){\footnotesize $x+y=n$}

\multiput(2,59)(2,0){16}{,}
\multiput(4,57)(2,0){15}{,}
\multiput(6,55)(2,0){14}{,}
\multiput(8,53)(2,0){13}{,}
\multiput(10,51)(2,0){12}{,}
\multiput(12,49)(2,0){11}{,}
\multiput(14,47)(2,0){10}{,}
\multiput(16,45)(2,0){9}{,}
\multiput(18,43)(2,0){8}{,}
\multiput(20,41)(2,0){7}{,}
\multiput(22,39)(2,0){6}{,}
\multiput(24,37)(2,0){5}{,}

{\multiput(0,57)(2,0){1}{,}
\multiput(0,55)(2,0){2}{,}
\multiput(0,53)(2,0){3}{,}
\multiput(0,51)(2,0){4}{,}
\multiput(0,49)(2,0){5}{,}
\multiput(0,47)(2,0){6}{,}
\multiput(0,45)(2,0){7}{,}
\multiput(0,43)(2,0){8}{,}
\multiput(0,41)(2,0){9}{,}
\multiput(0,39)(2,0){10}{,}
\multiput(0,37)(2,0){11}{,}}

\put(60,80){Type IV}

\end{picture}
\end{subfigure}
\begin{subfigure}[t]{.4\textwidth}\setlength{\tabcolsep}{.5em} 
\begin{picture}(0,130)(-150,-10)
\linethickness{.3mm}
\put(0,0){\line(0,1){110}}
\put(0,0){\line(1,0){110}}
\put(100,0){\line(-1,1){100}}
\put(80,-10){\line(-1,1){80}}

\linethickness{.2mm}

\dottedline{1}(0,70)(45,70)
\dottedline{1}(45,0)(45,70)
\dottedline{1}(0,45)(55,45)

\put(96,-9){\footnotesize $n$}
\put(-9,-2){\footnotesize 1}
\put(-2,-9){\footnotesize 1}
\put(-2,115){\footnotesize $y$}
\put(115,-2){\footnotesize $x$}
\put(-10,97){\footnotesize $n$}
\put(-45,68){\footnotesize $(a+b)n$}
\put(42,-9){\footnotesize $an$}
\put(-15,43){\footnotesize $an$}
\put(45,-19){\footnotesize $x+y=(a+b)n$}
\put(90,13){\footnotesize $x+y=n$}

\multiput(2,69)(2,0){14}{,}
\multiput(4,67)(2,0){14}{,}
\multiput(6,65)(2,0){14}{,}
\multiput(8,63)(2,0){14}{,}
\multiput(10,61)(2,0){14}{,}
\multiput(12,59)(2,0){14}{,}
\multiput(14,57)(2,0){14}{,}
\multiput(16,55)(2,0){14}{,}
\multiput(18,53)(2,0){13}{,}
\multiput(20,51)(2,0){12}{,}
\multiput(22,49)(2,0){11}{,}
\multiput(24,47)(2,0){10}{,}

{\multiput(0,67)(2,0){1}{,}
\multiput(0,65)(2,0){2}{,}
\multiput(0,63)(2,0){3}{,}
\multiput(0,61)(2,0){4}{,}
\multiput(0,59)(2,0){5}{,}
\multiput(0,57)(2,0){6}{,}
\multiput(0,55)(2,0){7}{,}
\multiput(0,53)(2,0){8}{,}
\multiput(0,51)(2,0){9}{,}
\multiput(0,49)(2,0){10}{,}
\multiput(0,47)(2,0){11}{,}}

\put(60,80){Type V}
\end{picture}
\end{subfigure}
\vskip 20pt
\caption{Rainbow solutions (shaded) over all possible $\chi(a,b)$}
\label{RainbowTypes}%
\end{figure}

For each given type in Figure \ref{RainbowTypes}, the enumeration of rainbow solutions to
$x+y<z$ up to $O(n^2)$ is easy to produce.  We do so in Table 2.

 \newpage
\begin{table}[h!]
$$
\begin{array}{l|l|l} 
\mbox{Type}&\mbox{Number of Rainbow Solutions}&\mbox{Domain}\\\hline
\mbox{I}&\displaystyle \sum_{x=1}^{an} \sum_{y=(a+b)n-x}^{(a+b)n} \hspace*{-10pt}(n-x-y) +\frac{(2b-a)a}{2}(1-a-b)n^3 &
\begin{array}{l}a<b,\\2a+b<1\end{array}\\\hline
\mbox{II}& \begin{array}{l} \displaystyle\sum_{x=1}^{n-(a+b)n} \sum_{y=(a+b)n-x}^{(a+b)n} \hspace*{-10pt}(n-x-y) + \hskip -8pt \sum_{x=n-an-bn}^{an} \sum_{y=(a+b)n-x}^{n-x} \hspace*{-10pt}(n-x-y)\\[15pt] \hspace*{100pt}\displaystyle +\frac{(2b-a)a}{2}(1-a-b)n^3\\[5pt] \end{array} &
\begin{array}{l}a<b,\\ 2a+b\geq 1\end{array}  \\\hline
\mbox{III}&\displaystyle \sum_{y=an}^{(a+b)n} \sum_{x=(a+b)n-y}^{n-y}  \hspace*{-10pt}(n-x-y) +\frac{b^2}{2}(1-a-b)n^3&
\begin{array}{l}a\geq b,\\ 2a>1 \end{array} \\\hline
\mbox{IV}& \displaystyle \sum_{y=an}^{(a+b)n} \sum_{x=(a+b)n-y}^{an} \hspace*{-10pt} (n-x-y) +\frac{b^2}{2}(1-a-b)n^3&
\begin{array}{l}a\geq b,\\ 2a+b \leq 1\end{array}   \\\hline
\mbox{V}& \begin{array}{l}\displaystyle \sum_{y=an}^{n-an} \sum_{x=(a+b)n-y}^{an}  \hspace*{-10pt}(n-x-y)+ \hskip -3pt \sum_{y=n-an}^{(a+b)n} \sum_{x=(a+b)n-y}^{n-y} \hspace*{-10pt} (n-x-y) \\[15pt] \hspace*{100pt}\displaystyle+\frac{b^2}{2}(1-a-b)n^3\\[5pt]
 \end{array}&
 \begin{array}{l}a\geq b,\\ 2a\leq 1\\1 < 2a+b \end{array}\\\hline
\end{array}
$$ 
\vskip 10pt
\centerline{\normalsize{\bf  Table 2}: Enumerated rainbow solutions over all possible $\chi(a,b)$}
\label{RainTab}
\end{table}

Coupling the expressions in Table 2 with the function in Lemma \ref{MTrips},
we can now give the number of monochromatic and rainbow solutions to $x+y<z$ with $x \leq y$ in $[1,n]$
under $\chi(a,b) \in \mathcal{D}$.  Letting $ G(a,b) =  G_{\chi(a,b)}(\mathcal{J};3)$ and suppressing
all $O(n^2)$ terms, we use
Maple to determine the following:

 $$
\frac{12G(a,b)}{n^3} =
 \left\{
 \begin{array}{rl}
 \displaystyle 1 - 6 a + 12 a^2 - 7 a^3 - 6 b + 36 a b-\\42 a^2 b + 12 b^2 -
 30 a b^2 - 10 b^3 &\mbox{for }a \geq b\mbox{ and }  a+b \leq \frac12;\\[5pt]
\displaystyle  1 - 6 a + 12 a^2 - 10 a^3 - 6 b + 36 a b \\- 33 a^2 b+ 12 b^2 -
 39 a b^2 - 7 b^3 &\mbox{for }a < b\mbox{ and }  a+b \leq \frac12;\\[10pt]
\displaystyle a^3 + 12 a b - 18 a^2 b - 6 a b^2 - 2 b^3 &\mbox{for }a \geq b, a+b > \frac12,\\&\mbox{and } 2a+b \leq 1;\\[5pt]
\displaystyle -2 a^3 + 12 a b - 9 a^2 b - 15 a b^2 + b^3 &\mbox{for }a<b, a+b > \frac12,\\&\mbox{and } 2a+b \leq 1;\\[5pt]
\displaystyle   a^3 + 6 b - 12 a b + 6 a^2 b - 6 b^2 + 6 a b^2&\mbox{for }a \geq b, a \geq \frac12, a+b \leq 1;\\[5pt]
\displaystyle  12 a - 24 a^2 + 17 a^3 + 6 b - 12 a b\\ + 6 a^2 b - 6 b^2 + 6 a b^2 -2&\mbox{for }a \geq b, a<\frac12, 2a+b>1;\\[5pt]
\displaystyle 12 a - 24 a^2 + 14 a^3 + 6 b - 12 a b\\ + 15 a^2 b - 6 b^2 -
 3 a b^2 + 3 b^3-2  &\mbox{for }a < b, a+b \leq 1,\\&\mbox{and } 2a+b>1.
 \end{array}
 \right.
 $$\normalsize

{At this stage it becomes a calculus problem, where we find all critical points where $\frac{\partial G}{\partial a} = \frac{\partial G}{\partial b} =0$
and compare against all the boundaries.  Using Maple, we find that the minimum of
$\left(\frac{3}{196} - \frac{\sqrt{2}}{147}\right)n^3+O(n^2)$ occurs along
the  exterior boundary, in particular at $a = \frac{4-\sqrt{2}}{7}$ and $b=0$
(and two other points both leading to the same 2-coloring, just with different colors).
As can be gleaned in Figure \ref{Fig2}, the minimum occurs when we have 2-colorings (and hence no rainbow solutions); this
minimum is the same as that determined in \cite{KRSS10}.

This gives us
$$
GM(\mathcal{J};3) \leq GM_{\mathcal{D}}(\mathcal{J};3) =\left(\frac{3}{196} - \frac{\sqrt{2}}{147}\right)n^3+O(n^2).
$$
\vspace*{-10pt}
\begin{figure}[h] \centering
\epsfig{file=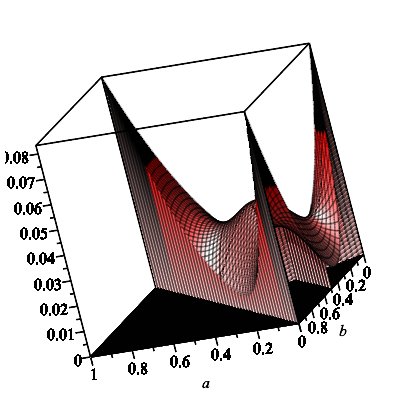, scale=.4} \hspace*{10pt}
\epsfig{file=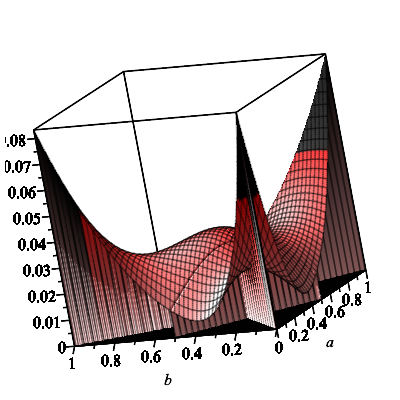, scale=.4}
\vskip 5pt
\caption{Graphs of $\frac{G(a,b)}{n^3}$}\label{Fig2}
\end{figure}

This is a bit of an unsatisfying answer, so we consider a perhaps more appropriate measure of the minimum
monochromatic and rainbow content in $r$-colorings of $[1,n]$.  To this end, consider the following definition.

\begin{definition}\label{def:kexact} We say that an $r$-coloring of $[1,n]$ is {\it $k$-exact} if each color is used at least
$k$ times.
\end{definition}

Before continuing, we should point out that even though the minimum number
of monochromatic and rainbow solutions over all 3-colorings in $\mathcal{D}$
occurs in a 2-coloring, this does not mean we do not have fewer monochromatic
and rainbow solutions over some 3-coloring not in $\mathcal{D}$.  This seems
to be a very difficult problem.

Considering only $\delta n$-exact $3$-colorings in $\mathcal{D}$ (with $\delta \in \left(0,\frac{1}{3}\right]$),
and denoting these by $\mathcal{D}_\delta$, using the work previously
done in this section, we need only adjust the exterior boundary of the problem and
compare all new boundary points to any critical points.    As can be visually seen in Figure \ref{Fig2},
and easily confirmed, no critical point is a local minimum.
Thus, we can conclude that for a given $\delta \in \left[0,\frac13\right)$, the value
of $GM_{\mathcal{D}_\delta}(\mathcal{J};3)$ occurs along one of the planes
$a=\delta$, $b=\delta$, or $a+b=1-\delta$.

\vskip 5pt
\noindent
{\bf Example.} Let $\delta=.1$.  Then $GM_{\mathcal{D}_\delta}(x+y<z;3) = Cn^3 + O(n^2)$,
where
$$
C=\left(\frac{5501}{294000} - \frac{23\sqrt{46}}{147000}\right) \approx 0.01764970347.
$$
This occurs along the boundary where $a=\frac{26-\sqrt{46}}{70}$ and $b=\delta$.

 \subsection{Optimizing the Number of Rainbow Solutions to $x+y<z$}

We have in place all the tools (see Table 2)  to determine the minimum number of rainbow solutions
over all colorings in $\mathcal{D}_\delta$. Obviously, the answer is $O(n^2)$ if
we do not restrict to $\delta n$-exact 3-colorings for some $\delta>0$.  We start by
graphing the function given in Table 2.  The result is in Figure \ref{Fig3}.

\vspace*{-10pt}
\begin{figure}[h] \centering
\epsfig{file=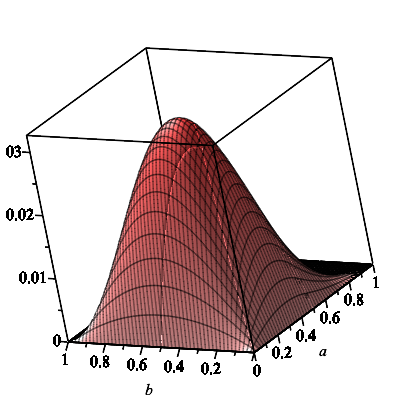, scale=.5}
\vskip 5pt
\caption{Graph of the number of rainbow solutions to $x+y<z$ in colorings $G(a,b)$}\label{Fig3}
\end{figure}

As we can see, asking for the minimum is not the correct question.  This makes some intuitive sense
since rainbow (sometimes referred to as anti-Ramsey) solutions are in some sense a dual of monochromatic (sometimes
referred to as Ramsey) solutions.  Hence, asking for
the maximal number of rainbow solutions to $x+y<z$ over $r$-colorings of $[1,n]$ is the correct question.

As we can see from the graph in Figure \ref{Fig3}, this reduces to finding the maximum function
value over all critical points (on the graph it appears there is only a single relative maximum).
Using Maple we verify that there is indeed a single relative maximum, which is the absolute maximum, which allows us
to state the following result.

\begin{theorem}\label{thmlast} The maximal number of rainbow solutions to $x+y<z$ over all 3-colorings of $[1,n]$ with
each color class being a single interval, is $Cn^3 + O(n^2)$, where
$$
C = \frac{3\sqrt{3}-5}{6} \approx 0.0326920707.
$$
This maximum occurs in the coloring $G\left(2-\sqrt{3},   \frac{\sqrt{3}-1}{2}\right) \approx G(.268,.366)$.
\end{theorem}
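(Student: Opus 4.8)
The plan is to treat the number of rainbow solutions as an explicit piecewise-polynomial function of the two free parameters $a,b$ and then maximize it by elementary calculus. A rainbow solution to $x+y<z$ with $x\le y$ forces $\mathrm{color}(x)<\mathrm{color}(y)<\mathrm{color}(z)$: the inequalities $x\le y$ and $z>x+y\ge y$ give $x\le y<z$, and since the colors are assigned to the increasing intervals red, blue, green, distinctness upgrades this to strict inequality, so the only rainbow pattern is $x$ red, $y$ blue, $z$ green. Thus the five expressions in Table 2 (obtained by carrying out the double sums, each inner sum counting admissible $z$ via Lemma \ref{CountLemma}) together enumerate exactly these triples, and I would first record, for each Type, the closed form $R(a,b)\,n^3+O(n^2)$, where $R(a,b)$ is the leading cubic polynomial in $a$ and $b$. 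Checking that the five pieces agree on their shared interfaces ($a=b$, $2a+b=1$, $2a=1$) confirms that $R$ is a single continuous function on the closed triangle $\Delta=\{(a,b):a,b\ge 0,\ a+b\le 1\}$, guaranteeing a global maximum.

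Next I would locate that maximum. On each edge of $\Delta$ one color class is empty ($a=0$ kills red, $b=0$ kills blue, and $a+b=1$ makes the green interval $((a+b)n,n]$ empty), so $R\equiv 0$ on $\partial\Delta$ and the maximum is attained in the interior. The approach is then: within each Type region solve $\partial_a R=\partial_b R=0$, discarding solutions that fall outside that region; separately examine the internal interface curves where two pieces meet; and compare all resulting candidate values. Because $R$ is piecewise cubic, this is a finite (if lengthy) search, and it is exactly the computation the graph in Figure \ref{Fig3} is meant to guide.

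Finally, the claimed optimum $(a,b)=\left(2-\sqrt{3},\tfrac{\sqrt{3}-1}{2}\right)\approx(0.268,0.366)$ satisfies $a<b$ and $2a+b\approx 0.902<1$, which places it in the Type I region; there I would verify that the gradient of the Type I cubic vanishes at this point, that the Hessian is negative definite (so it is a genuine local maximum), and that back-substitution yields $R=\frac{3\sqrt{3}-5}{6}$. It then remains to confirm that every other regional critical point and every interface maximum gives a strictly smaller value, so this local maximum is the global one. I expect the main obstacle to be bookkeeping rather than ideas: correctly evaluating the five double sums, tracking which of the several candidate critical points actually lie in their nominal regions, and ruling out competing extrema along the internal dividing curves. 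Figure \ref{Fig3} strongly suggests a unique interior maximum, but certifying uniqueness across all the region interfaces is precisely where the care (and the appeal to Maple) is required.
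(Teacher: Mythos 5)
Your proposal is correct and takes essentially the same route as the paper: both reduce the problem to maximizing the piecewise cubic of Table 2 over the triangle $\{a,b\ge 0,\ a+b\le 1\}$, and indeed the Type I count simplifies to $\bigl(ab(1-a-b)-\tfrac{a^3}{6}\bigr)n^3+O(n^2)$, whose unique interior critical point is exactly $\left(2-\sqrt{3},\tfrac{\sqrt{3}-1}{2}\right)$ with value $\tfrac{3\sqrt{3}-5}{6}$. The paper merely compresses your boundary/regional/interface bookkeeping into a visual inspection of Figure \ref{Fig3} together with a Maple verification that the single relative maximum is the absolute one.
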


We note here that the maximum in Theorem \ref{thmlast} is significantly more than the $\frac{3!}{3^3}\cdot \frac{n^3}{12} + O(n^2) = \frac{n^3}{54} + O(n^3)$
expected under a random $3$-coloring of $[1,n]$ (about 76.5\% more).

We have attempted (in vain) to determine the maximum number of rainbow solutions to $x+y<z$ over all 3-colorings of $[1,n]$
but have made little progress.  We can provide the following bounds.

\begin{theorem} Let ${RM}_3(n)$ be the maximum number of rainbow solutions to $x+y<z$ with $x \leq y$ over all
$3$-colorings of $[1,n]$.  Then
$$
\frac{n^3}{31}(1+o(1)) < {RM}_3(n) \leq \frac{n^3}{27}(1+o(1)).
$$
\end{theorem}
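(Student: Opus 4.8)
The plan is to establish the two bounds by completely separate arguments: the lower bound is inherited directly from the interval-coloring optimization already performed in Theorem \ref{thmlast}, while the upper bound follows from a short transversal count together with the AM--GM inequality. Neither bound is expected to be sharp; the genuine difficulty, namely closing the gap between $\frac{1}{31}$ and $\frac{1}{27}$, is precisely the open problem flagged in the preceding discussion.

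For the upper bound I would proceed as follows. Fix an arbitrary $3$-coloring of $[1,n]$ and let $r$, $b$, $g$ denote the sizes of the three color classes, so that $r+b+g=n$. A rainbow solution $(x,y,z)$ to $x+y<z$ with $x\le y$ must use three \emph{distinct} colors; since each integer carries a single color, the values $x,y,z$ are three distinct integers, one from each color class, and in particular $x<y$ is forced (equal values would share a color). Thus every rainbow solution arises by selecting one integer from each class and checking, after sorting the chosen values as $x<y<z$, whether $x+y<z$ holds. Consequently the number of rainbow solutions is at most the number of such color-transversals, namely $rbg$, and by AM--GM we get $rbg\le\left(\frac{r+b+g}{3}\right)^3=\frac{n^3}{27}$. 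Hence ${RM}_3(n)\le\frac{n^3}{27}$, which in particular yields the claimed $\frac{n^3}{27}(1+o(1))$ bound.

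For the lower bound I would simply note that every $3$-coloring whose color classes are single intervals is a $3$-coloring of $[1,n]$, so ${RM}_3(n)$ is at least the maximum computed in Theorem \ref{thmlast}; that is, ${RM}_3(n)\ge\frac{3\sqrt{3}-5}{6}\,n^3+O(n^2)$. It then remains only to verify the numerical inequality $\frac{3\sqrt{3}-5}{6}>\frac{1}{31}$, which is equivalent to $93\sqrt{3}>161$ and holds since $\sqrt{3}=1.7320\ldots>\frac{161}{93}=1.7311\ldots$. Because this inequality is strict, for all sufficiently large $n$ we obtain $\frac{n^3}{31}(1+o(1))<{RM}_3(n)$, as required.

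The main points requiring care are therefore not deep estimates but rather two clean observations: first, that a rainbow solution is exactly a color-transversal satisfying $x+y<z$, so the count is dominated by $rbg$ and AM--GM delivers $\frac{1}{27}$ with no further work; and second, the tightness check $\frac{3\sqrt{3}-5}{6}>\frac{1}{31}$, which lets the interval-optimal coloring of Theorem \ref{thmlast} serve as the lower construction. The real obstacle, which I do not expect to resolve, is that the upper bound is almost certainly not sharp: a genuinely optimal coloring need not have interval color classes, and improving either constant would require controlling rainbow solutions for arbitrary non-interval $3$-colorings, exactly the difficulty acknowledged in the text.
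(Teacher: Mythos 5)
Your proposal is correct and follows essentially the same route as the paper: the lower bound is inherited from Theorem \ref{thmlast} via the numerical check $\frac{3\sqrt{3}-5}{6} > \frac{1}{31}$, and the upper bound comes from observing that every rainbow solution is a color-transversal, so there are at most $rbg \leq \left(\frac{n}{3}\right)^3$ of them. The only cosmetic difference is that you maximize $rbg$ by AM--GM where the paper maximizes $rb(n-r-b)$ directly; these are the same computation.
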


\begin{proof} The lower bound follows from Theorem \ref{thmlast} along with the fact that $\frac{1}{31} < \frac{3\sqrt{3}-5}{6}$.
For the upper bound, consider a 3-coloring with $r$ red, $b$ blue, and $n-r-b$ green integers.
To determine an upper bound on our maximum ${RM}_3(n)$, consider the situation where every possible
rainbow triple satisfies $x+y<z$.  There are $rb(n-r-b)$ such triples.  Maximizing this expression over $r$ and $b$
gives the upper bound of the theorem (when $r=b=\frac{n}{3}$).
\end{proof}


\section{Open Questions}

\begin{question}
\begin{minipage}[t]{4.2in}
Comparing Theorems \ref{th-LR04} and   \ref{th3-color}, explain why
we have equality when $b$ is odd, even though the number of colors used is different.
\end{minipage}
\end{question}

\begin{question}
\begin{minipage}[t]{4.2in}
Noting that the upper bound in Theorem \ref{thm4.3} is independent of $r$ and that
the notion of exact coloring may not be appropriate here, for $r \geq 3$, determine bounds on  the minimum number
of Gallai-Schur triples
over all $k$-exact $r$-colorings of $[1,n]$ (see Definition \ref{def:kexact}).  What conclusion can be drawn when
$k=\delta n$ with  {$\delta \in \left(0,\frac1r\right)$}?
\end{minipage}
\end{question}

\begin{question}
\begin{minipage}[t]{4.2in}
Determine, as a function of $\delta$, the asymptotic minimum number of monochromatic and rainbow solutions to $x+y<z$
that can occur over all $3$-colorings in $\mathcal{D}_\delta$.
\end{minipage}
\end{question}

\begin{question}
\begin{minipage}[t]{4.2in}
Determine, asymptotically, the minimum number of rainbow solutions to $x+y=z$ over all $\delta n$-exact $3$-colorings of $[1,n]$.
\end{minipage}
\end{question}

\begin{question}
\begin{minipage}[t]{4.2in}
Determine, asymptotically, the maximum number of rainbow solutions to $x+y<z$ over all   $3$-colorings of $[1,n]$.
We conjecture that the value in Theorem \ref{thmlast} is the correct value.
\end{minipage}
\end{question}

\vskip 20pt
\section*{Acknowledgement} The authors thank Fred Rowley for useful comments on
a preprint of this article.

\vskip 20pt
\section*{Funding}
Yaping Mao is supported by the JSPS KAKENHI (No. 22F20324).
Chenxu Yang is
supported by the National
Science Foundation of China (No. 12061059) and
the Qinghai Key Laboratory of Internet of Things Project
(2017-ZJ-Y21).  The other authors did not receive support from any
organization for the submitted work.

\section*{Data Availability Statement} No datasets were generated for the current study;
however, computer output is reported and was generated by the aforementioned
{\tt GALRAD} Maple program, available at {\tt http://math.colgate.edu/$\sim$aaron}.

\section*{Declarations}

\subsection*{Conflict of Interest} There is no conflict of interest.

\section*{A. Appendix}

In this appendix we provide a proof of Lemma \ref{lem-2}.  In order to do so,
we will rely on the following lemma.

\begin{lemma}\label{lem-2-2}
Let $n,r \in \mathbb{Z}^+$ with $r\geq 2$. If $\chi$ is a
palindromic Gallai-Schur  $r$-coloring of
 $[1,n]$, then $\psi=\langle \chi, r+1,
\chi, r+2, \chi, r+2, \chi\rangle $ is
 a Gallai-Schur $(r+2)$-coloring of
$[1,4n+3]$.
\end{lemma}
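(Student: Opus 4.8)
The plan is to mirror the case analysis in the proof of Lemma \ref{lem-1}, exploiting two structural features of $\psi$. First, away from the three inserted singletons, $\psi$ is periodic with period $n+1$: writing each non-special $i$ as $i = q(n+1)+s$ with $q \in \{0,1,2,3\}$ and $s \in [1,n]$, we have $\psi(i) = \chi(s)$. Second, since $\chi$ is palindromic, $\chi(s) = \chi(n+1-s)$. Set $A = \{n+1, 2n+2, 3n+3\}$, the positions carrying the new colors $r+1, r+2, r+2$, and suppose for contradiction that $(x,y,z)$ with $x \le y < z$ and $x+y=z$ is a Gallai-Schur triple under $\psi$. The goal is to first force $x,y,z \notin A$ and then reduce to a Gallai-Schur triple among residues under $\chi$, contradicting that $\chi$ is a Gallai-Schur coloring.

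First I would dispose of the special positions. The only Schur triples contained in $A$ are $(n+1,n+1,2n+2)$ and $(n+1,2n+2,3n+3)$, with color patterns $r+1,r+1,r+2$ and $r+1,r+2,r+2$; neither is monochromatic nor rainbow, so not all of $x,y,z$ lie in $A$. If $z \in A$, say $z = j(n+1)$, and $x,y \notin A$, then reducing modulo $n+1$ forces $s_x + s_y = n+1$ (the only multiple of $n+1$ in $[2,2n]$), whence the palindrome gives $\psi(x) = \chi(s_x) = \chi(n+1-s_x) = \psi(y)$; this excludes rainbow, while monochromaticity fails since $\psi(z) \ge r+1 > \psi(x)$. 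The bound $z \le 4n+3$ together with $x \le y$ eliminates the possibility that a single special summand accompanies $z \in A$, so $z \notin A$. If instead $x \in A$, then $x = n+1$ (the larger values force $z = x+y > 4n+3$), and periodicity gives $\psi(z) = \psi(y) < r+1 = \psi(x)$, again excluding both monochromatic and rainbow; symmetrically, each choice $y \in \{n+1,2n+2,3n+3\}$ yields $\psi(x) = \psi(z) < \psi(y)$. Hence none of $x,y,z$ lies in $A$.

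With $x,y,z \notin A$ I would pass to residues $s_x, s_y, s_z \in [1,n]$ and note that, since $s_x+s_y \in [2,2n]$ and $z \notin A$ forbids $s_x+s_y = n+1$, exactly one of $s_x+s_y = s_z$ or $s_x+s_y = s_z+(n+1)$ holds. In the first case $(s_x,s_y,s_z)$ is itself a Schur triple in $[1,n]$ carrying the same color multiset $\{\psi(x),\psi(y),\psi(z)\}$ as the original, hence a Gallai-Schur triple under $\chi$. In the second case I replace $s_x$ by $s_x' = n+1-s_x$; the palindrome preserves its $\chi$-color, while the relation becomes $s_x' + s_z = s_y$, again a Schur triple in $[1,n]$ carrying the colors $\psi(x), \psi(z), \psi(y)$. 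Either way one obtains a monochromatic or rainbow Schur triple under $\chi$ (after ordering the summands), contradicting that $\chi$ is a Gallai-Schur coloring, and the proof closes. Note that no minimality-of-$z$ hypothesis is needed, since the reduction produces a $\chi$-triple directly.

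The residue reduction is the routine part; the main obstacle is the bookkeeping for the special positions. In particular, one must verify that the inequalities $x \le y < z \le 4n+3$ eliminate every mixed configuration in which exactly one or two of $x,y,z$ sit in $A$, and confirm that the chosen pattern $r+1,r+2,r+2$ (rather than three distinct new colors, which two colors cannot supply) prevents any rainbow triple inside $A$. These are precisely the places where the specific construction, as opposed to the generic periodic–palindromic skeleton, is essential.
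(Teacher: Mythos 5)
Your proof is correct, but it follows a genuinely different route from the paper's. The paper proves Lemma \ref{lem-2-2} incrementally: it splits on $z\leq 3n+2$ versus $z\geq 3n+3$, treats the prefix $\gamma=\langle \chi,r+1,\chi,r+2,\chi\rangle$ as an embedded sub-lemma (Case 1), and bootstraps from Lemma \ref{lem-1} (the $\chi^*$ result), performing the shift-by-$(n+1)$ and palindromic-reflection reductions separately inside each layer. You instead argue globally: isolate the special set $A=\{n+1,2n+2,3n+3\}$, check that triples meeting $A$ are impossible (the two Schur triples inside $A$ are neither monochromatic nor rainbow; $z\in A$ forces $\psi(x)=\psi(y)$ by the palindrome; $x\in A$ or $y\in A$ forces the other two colors equal by periodicity), and then reduce any triple avoiding $A$ modulo $n+1$, where the dichotomy $s_x+s_y=s_z$ versus $s_x+s_y=s_z+(n+1)$ packages the paper's shift and reflection steps into one clean argument. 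This is really the technique of the paper's proof of Theorem \ref{thm-4} (special positions plus block intervals), and your modular formulation even improves on it by making the minimality-of-$z$ device unnecessary, as you note. What the paper's approach buys is reuse: each verification is short because it leans on the previously established prefix colorings. What yours buys is uniformity and generality: all four blocks are handled at once, and the argument visibly extends to any palindromic-periodic concatenation whose special-position set contains no rainbow or monochromatic triple and is closed under differences. One small imprecision: you attribute the elimination of ``$z\in A$ with exactly one special summand'' to the bounds $x\leq y<z\leq 4n+3$, whereas the cleaner reason is that $A$ is closed under differences within range, so two elements of the triple in $A$ force the third into $A$ as well; the enumeration you already perform fills this in immediately, so nothing is at stake.
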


\begin{proof}[Proof of Lemma \ref{lem-2-2}.]
 Suppose, for a contradiction,   that there is a Gallai-Schur  triple
$(x,y,z)$ with $x\leq y<z$ under $\psi$. 

\vskip 5pt
\noindent
{\tt Case 1.} $z \leq 3n+2$.  Let $\gamma=\langle \chi, r+1, \chi, r+2, \chi\rangle $ so that $\gamma(i) = \psi(i)$ for $1 \leq i \leq 3n+2$. Suppose, for a contradiction, 
that there is a Gallai-Schur triple $(x,y,z)$ with
$x\leq y<z$ under $\gamma$. By Lemma \ref{lem-1}, we have $z\geq
2n+2$; otherwise $(x,y,z)$ is a Gallai-Schur
triple under $\chi^*$,  {contradicting the fact that $\chi^*$ is
Gallai-Schur coloring.}
If $z=2n+2$,  {since
$\langle \chi,r+1,\chi\rangle $ is a palindromic coloring, it follows that $\gamma(x) =
\gamma(2n+2-x)=\gamma(y)\neq r+2=\gamma(z)$}, which contradicts the assumption that
$(x,y,z)$ is a Gallai-Schur triple.
Thus $z\in [2n+3,3n+2]$ and $x\leq 2n+1$.

 {If $y\in [2n+3,3n+2]$,} then setting $y'=y-n-1$ and
$z'=z-n-1$ gives $\gamma(y')=\gamma(y)$ and $\gamma(z')=\gamma(z)$,  {where $y', z'\leq 2n+1$}.
It follows that $(x,y',z')$ is a Gallai-Schur
triple under $\gamma$, a contradiction. If $y=2n+2$, then it is easy to see that $\gamma(x)
=\gamma(x+2n+2)\neq \gamma(2n+2)$, a contradiction.  Hence $y\leq 2n+1$.

Now let $x'=2n+2-x$ and $z'=z-2n-2$ {so that $x',z'\leq 2n+1$}.
Since $\langle \chi,r+1,\chi\rangle $ is
 {a palindromic Gallai-Schur coloring}, we see that $\gamma(x')=\gamma(x)$ and $\gamma(z')=\gamma(z)$.
Moreover, $x'+z'=z-x=y$ implies that $(x',z',y)$ or $(z',x',y)$ is a Gallai-Schur triple, the final contradiction
that finishes this case.

\vskip 5pt
\noindent
{\tt Case 2.} $z \geq 3n+3$.
If $z=3n+3$, since $(n+1,2n+2,3n+3)$ is neither
monochromatic nor rainbow under $\psi$, {it follows that} $x\neq n+1$. Since
$\chi$ is {a palindromic coloring},  {it follows that} $\psi(x) = \psi(3n+3-x)=\psi(y)\neq
r+2=\psi(z)$, which contradicts the assumption that $(x,y,z)$ is
monochromatic or rainbow. Hence $z\in [3n+4,4n+3]$ and $x\leq 2n+1$.

 {If  $y\in[3n+4,4n+3]$}, then setting $y'=y-n-1$ and
$z'=z-n-1$ yields $\psi(y')=\psi(y)$ and $\psi(z')=\psi(z)$,  {where $y',z'\leq 3n+2$}.
It follows that $(x,y',z')$ is a monochromatic or rainbow Schur
triple under $\psi$, contradicting the fact
 that $\langle \chi, r+1, \chi,
r+2, \chi\rangle $ is  {a Gallai-Schur coloring} (see Case 1).
If $y=3n+3$, then {$x\leq n$ and} it is easy to see
that $\psi(x) =\psi(x+3n+3)\neq \psi(3n+3)$, a contradiction.  Hence
$y\leq 3n+2$. Now let $x'=3n+3-x$ and $z'=z-3n-3$. Clearly, we have
$\psi(z')=\psi(z)$. If $x\neq n+1$, then $\psi(x')=\psi(x)$. Then
$x'+z'=z-x=y$ implies that either $(x',z',y)$ or $(z',x',y)$ is a Gallai-Schur triple, which contradicts the fact
 that $\langle \chi, r+1, \chi,
r+2, \chi\rangle $ is {a Gallai-Schur coloring}. If $x=n+1$,
then
$\psi(z)=\psi(z-n-1)=\psi(y)\neq \psi(x)=r+1$, which contradicts the
assumption that $(x,y,z)$ is
 monochromatic or rainbow. 
\end{proof}

 We can now present our proof of Lemma \ref{lem-2}. As a reminder of the notation used,
if $\chi$ is an $r$-coloring of $[1,n]$, then
$
\chi^{*}= \langle \chi, r+1, \chi \rangle
$
is an $(r+1)$-coloring of $[1,2n+1]$ and
$
\chi^{**}= \langle \chi, r+1, \chi, r+2,
\chi, r+2, \chi, r+1, \chi \rangle
$
is an $(r+2)$-coloring of $[5n+4]$.

\begin{remark}\label{rem2.2}
Note that for all $i\in [1,n]$ and $j\in\{1,2,3,4\}$, we have
$$\chi^{**}(i+(j-1)(n+1))=\chi^{**}(i+j(n+1)).$$
\end{remark}

\begin{proof}[Proof of Lemma \ref{lem-2}.]
Clearly, $\chi^{**}$ is  {a palindromic coloring}. Suppose to the contrary that
there is a Gallai-Schur  triple $(x,y,z)$ with
$x\leq y<z$ under $\chi^{**}$. By Lemma \ref{lem-2-2} we have $z\geq
4n+4$, for otherwise $(x,y,z)$ is a monochromatic or rainbow Schur
triple under $\langle \chi, r+1, \chi,
 r+2, \chi, r+2, \chi\rangle$,  {contradicting the fact that
$\langle \chi, r+1, \chi, r+2, \chi, r+2,
\chi\rangle $ is a Gallai-Schur coloring.} If $z=4n+4$, since both $(n+1,3n+3,4n+4)$ and
$(2n+2,2n+2,4n+4)$ are neither monochromatic nor rainbow, we have $x\neq
n+1, 2n+2$. But then $\chi^{**}(x) =
\chi^{**}(4n+4-x)=\chi^{**}(y)\neq r+1 =\chi^{**}(z)$, a contradiction. Thus $z\in
[4n+5,5n+4]$.

 {If $y\in[4n+5,5n+4]$}, let $y'=y-n-1$ and
$z'=z-n-1$, then we have $\chi^{**}(y')=\chi^{**}(y)$ and
$\chi^{**}(z')=\chi^{**}(z)$. It follows that $(x,y',z')$ is a
monochromatic or rainbow Schur triple under $\chi^{**}$,
contradicting Lemma \ref{lem-2-2}.
If $y=4n+4$, then  {$x\leq n$ and,} by Remark \ref{rem2.2}, we have
$\chi^{**}(x)
 =\chi^{**}(x+4n+4)\neq \chi^{**}(4n+4)$,
a contradiction.
Hence $y\leq 4n+3$. Now let $x'=4n+4-x$ and
$z'=z-4n-4$. Clearly, we have
$\chi^{**}(z')=\chi^{**}(z)$.
If $x\neq n+1,2n+2$, then $\chi^{**}(x')=\chi^{**}(x)$. Then
$x'+z'=z-x=y$ implies that $(x',z',y)$ or $(z',x',y)$ is a monochromatic or rainbow
Schur triple, which contradicts Lemma \ref{lem-2-2}. If $x=n+1$, then
$\chi^{**}(z)=\chi^{**}(z-n-1)=\chi^{**}(y)\neq \chi^{**}(x)=r+1$,
which contradicts the assumption that $(x,y,z)$ is monochromatic or
rainbow. If $x=2n+2$, then
$\chi^{**}(z)=\chi^{**}(z-2n-2)=\chi^{**}(y)\neq \chi^{**}(x)=r+2$,
which contradicts the assumption that $(x,y,z)$ is monochromatic or
rainbow. Thus, $\chi^{**}$ is {a Gallai-Schur coloring}.
\end{proof}

\end{document}